\def\Af{\mathcal{A}}
\def\mto{\mathbb{L}}
\def\H{H}
\def\real{\mathbb{R}}
\def\complex{\mathbb{C}}
\def\integer{\mathbb{Z}}
\def\FBI{\mathcal{T}}
\def\pFBI{\mathscr{T}}
\def\Sch{\mathcal{S}}
\def\Pj{\mathcal{P}}
\def\pPj{\mathscr{P}}
\def\Fourier{\mathscr{F}}
\def\L{\mathcal{L}}
\def\Id{\mathrm{Id}}
\def\weight{\mathcal{W}}
\def\cone{\mathbf{C}}
\def\ctr{\mathrm{ctr}}
\def\cpt{\mathrm{cpt}}
\def\hyp{\mathrm{hyp}}
\def\aniso{\mathrm{aniso}}
\def\supp{\mathrm{supp}}
\def\k{\mathbf{k}}
\def\LL{\mathbb{L}}
\newtheorem{theorem}{Theorem}[section]
\newtheorem{proposition}[theorem]{Proposition}
\newtheorem{lemma}[theorem]{Lemma}
\newtheorem{corollary}[theorem]{Corollary}
\newtheorem*{claim}{Claim}
\theoremstyle{remark}
\newtheorem{remark}[theorem]{Remark}
\theoremstyle{definition}
\newtheorem{definition}[theorem]{Definition}
\begin{document}

\title{Contact Anosov flows and the FBI transform}
\author{Masato TSUJII}
\address{Department of Mathematics, Kyushu University, 
Moto-oka 744, Nishi-ku, Fukuoka, 819-0395, JAPAN}
\email{tsujii@math.kyushu-u.ac.jp}
\subjclass[2000]{37C30, 37D40, 37A25}
\date{}
\begin{abstract}
This paper is about spectral properties of transfer operators for contact Anosov flows. 
The main result gives the essential spectral radii of the transfer operators  acting on an appropriate function space exactly and  improves the previous result in~\cite{Tsujii2009}. 
Also we  provide a simplified proof by using the so-called FBI (or Bargmann) transform. 
\end{abstract}

\maketitle

\section{Introduction}
In this paper, we consider spectral properties of transfer operators for contact Anosov flows.
A contact Anosov flow is by definition an Anosov flow preserving a contact form  on the underlying manifold.
The geodesic flows on closed negatively curved manifolds are  typical examples of  contact Anosov flows and have been studied extensively since the work\cite{Hopf39}
of E.\ Hopf in 1930's.  We refer  \cite{Anosov69, Sinai61,Dolgopyat98, Liverani04, Tsujii2009} and the references therein for succeeding works related to this paper. 

For a contact Anosov flow $F^t:M\to M$ and a multiplicative cocycle $g^t$ over it, we consider the one-parameter family of transfer operators
\[
\L^t u=g^t\cdot u\circ F^t.
\]
In the previous paper\cite{Tsujii2009}, we studied the case $g^t\equiv 1$, that is, the case of pull-back operator and proved that the operators $\L^t$ for sufficiently large $t$ are quasi-compact if we choose an appropriate function space for them to act on. This implied not only exponential decay of correlations, which had been proved by Dolgopyat\cite{Dolgopyat98} and Liverani\cite{Liverani04}, but also a precise asymptotic formula for correlation decay or the Ruelle-Pollicott resonance. 
The main result of the present paper generalizes it to transfer operators with general cocycles $g^t$ and improves the statement slightly  by giving the essential spectral radius of $\L^t$  exactly in terms of dynamical exponents.
Besides we provide a simplified proof for the main  result by introducing a technique, the FBI transform, from semi-classical analysis. 

The basic idea behind our argument is to regard functions on the  manifold $M$ as superpositions of wave packets (that is, localized simple wave functions) and to study  the action of transfer operator $\L^t$ on each of those wave packets. 
The wave packets are parametrized by elements of the cotangent bundle $T^*M$, that is,  pairs of a point $x\in M$, which indicates the center of mass, and a cotangent vector $\xi\in T_x^*M$, which indicates the frequency vector. The action of the transfer operator $\L^t$ of the wave packets are closely related to the action $(DF^t)^*:T^*M\to T^*M$ of the flow $F^t$ on the cotangent bundle $T^*M$. That is, roughly speaking, the wave packet corresponding to $(x,\xi)\in T^*M$ is transferred by $\L^t$ to a superposition of wave packets corresponding to elements of $T^*M$ close to  $(DF^t)^*(x,\xi)$. 
Notice that the action of the flow $F^t$ on $T^*M$ is not recurrent outside any small neighborhood of the one-dimensional subbundle spanned by $\alpha$ and also that the part of $\L^t$ acting on wave packets with low frequency should be compact. Therefore, to estimate the essential spectral radius of $\L^t$,  we mainly concern the action of $\L^t$ on wave packets that have high frequency in the direction of the contact form $\alpha$.
That is, in the case of geodesic flows, we concern the situation where wave packets with high frequency are proceeding along the geodesic curves in the corresponding directions. This reminiscent  us of the situation studied in  semiclassical analysis and suggests a vague idea that some argument and technique in semi-classical analysis may be useful in the study of  "classical" geodesic flows (or contact Anosov flows, more generally). In this paper, we would like to show that this is the case in fact. 
See also the recent paper \cite{Faure10} for a similar approach, in which Faure and Sj\"ostrand use semi-classical analysis to give an asymptotic estimate for the eigenvalues of the generator of transfer operators.

\section{The main result}
Let $M$ be a closed odd-dimensional $C^\infty$ Riemann manifold of dimension $2d+1$.
We suppose that $M$ is equipped with a $C^\infty$ contact form $\alpha$, which is by definition a differential $1$-form such that  $\alpha\wedge (d\alpha)^d$  vanishes nowhere on $M$. 
A $C^\infty$ flow $F^t:M\to M$ is said to be Anosov if there exist constants $C>1$, $\lambda_0>1$ and a continuous invariant decomposition $TM=E_c\oplus E_s\oplus E_u$ of the tangent bundle such that 
\begin{enumerate}
\item $E_c$ is the $1$-dim subbundle spanned by the generating vector field~$V$ of $F^t$.
\item $\|DF^t(v)\|\le C \lambda_0^{-t}\|v\|$ for any $v\in E_s$ and $t\ge 0$.
\item $\|DF^{-t}(v)\|\le C \lambda_0^{-t}\|v\|$ for any $v\in E_u$ and $t\ge 0$.
\end{enumerate}
If an Anosov flow $F^t:M\to M$ preserves the contact form $\alpha$, we call it a contact Anosov flow.
For a contact Anosov flow, the subspaces $E_s$ and $E_u$ is contained in the kernel of $\alpha$ because of invariance of $\alpha$ and, hence, $\ker \alpha=E_s\oplus E_u$. 
The restriction of $d\alpha$ to $\ker \alpha$ is a symplectic form from the definition of contact form and vanishes on $E^u$ and $E^s$ because it is invariant with respect to the flow $F^t$. 
This implies $\dim E_s=\dim E_u=d$ in particular.

A $C^\infty$ one-parameter family of functions $g^t:M\to \complex\setminus\{0\}$ with parameter $t\in \real$ is called a multiplicative cocycle over a flow $F^t:M\to M$ if 
$g^{t+s}(x)=g^t(F^s(x))\cdot g^s(x)$ for $t,s\in \real$ and $x\in M$.
For a flow $F^t$ and a multiplicative cocycle $g^t$ over it, we consider a transfer operator
\[
\L^t:C^\infty(M)\to C^\infty(M), \qquad \L^t \, u(x)=g^t(x)\cdot u(F^t(x)).
\]
This is a one-parameter group of operators. In what follows, we consider the transfer operator $\L^t$ associated to a $C^\infty$ contact Anosov flow $F^t:M\to M$ and a $C^\infty$ multiplicative cocycle $g^t:M\to \complex$. 
In Section \ref{s:pFBI}, we will introduce a scale of Hilbert spaces $\H_{\aniso}^r(M)$ with a parameter $r>0$, adapted to the flow $F^t$. Those Hilbert  spaces satisfy 
\[
H^{r}(M)\subset H_\aniso^r(M) \subset H^{-r}(M)
\]
where $H^r(M)$ is the Sobolev space  of order $r$ on $M$. Our main result  is 
\begin{theorem}\label{th:main} 
If $t$ is sufficiently large, the transfer operator $\L^t$ extends naturally to a bounded operator on $H_\aniso^r(M)$ for any $r>0$. If $r>0$ is sufficiently large, the essential spectral radius of $
\L^t:H_\aniso^r(M)\to H_\aniso^r(M)
$ is exactly $\Lambda^t$,  where $\Lambda$ is the quantity defined by
\[
\Lambda=\lim_{t\to \infty}\sup_{x\in M} \left(\frac{|g^t(x)|}{\sqrt{\det (DF^t|_{E_u})}}\right)^{1/t}.
\]
\end{theorem}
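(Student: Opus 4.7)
The plan is to analyze $\L^t$ through its conjugate under the FBI transform $\Fourier$ from Section~\ref{s:pFBI}, which embeds $L^2(M)$ isometrically into a weighted $L^2$-space on $T^*M$. The anisotropic space $\H_\aniso^r(M)$ is by construction the pull-back of an $L^2$-space on $T^*M$ equipped with an order function $\weight$ that grows slowly in the direction of $\alpha$ and rapidly transverse to it. Since $\Fourier\L^t\Fourier^{-1}$ is, to leading order, a transfer operator on $T^*M$ driven by the symplectic lift $(DF^t)^*$, the proof reduces to understanding a dynamical system on $T^*M$ whose hyperbolicity is concentrated precisely in the directions that $\weight$ was designed to penalize.

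For the upper bound, I would decompose $T^*M$ into three dynamically adapted regions: (i) a bounded neighborhood of the zero section, which contributes a compact piece through standard Sobolev compactness; (ii) a region of high frequencies outside a narrow cone around the line bundle spanned by $\alpha$, on which $(DF^t)^*$ is strictly hyperbolic, so that iterates push the frequency support into regions where $\weight$ forces exponential decay; and (iii) a narrow conic neighborhood of the flow direction itself. On region (iii) a wave-packet calculation shows that $\L^t$ sends a normalized wave packet centered at $(x,\xi)$ to (approximately) a wave packet centered at $(DF^t)^*(x,\xi)$ with amplitude $g^t(x)/\sqrt{\det DF^t|_{E_u(x)}}$, the square-root Jacobian arising from the $L^2$-normalization of the pull-back by $F^t$ once the unstable Jacobian of $(DF^t)^*$ has been accounted for. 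Combining the three estimates into a Lasota--Yorke inequality $\|\L^{nt}u\|_{\H_\aniso^r}\le A\,(\Lambda^t+\varepsilon)^n\|u\|_{\H_\aniso^r}+B_n\|u\|_{H^{-r}}$ with compact remainder yields essential spectral radius at most $\Lambda^t+\varepsilon$ for every $\varepsilon>0$, and the bounded extension of $\L^t$ to $\H_\aniso^r(M)$ for large $t$ is a by-product of the same inequality.

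For the matching lower bound, I would, for each $\mu<\Lambda$, exhibit an infinite-dimensional family of approximate eigenvectors: pick $x_*$ at which the supremum defining $\Lambda$ is nearly attained, take wave packets concentrated at phase-space points $(x_*,N\,\alpha(x_*))$ for large $N$, and verify via the same region-(iii) wave-packet calculus that their $\L^{nt}$-images have $\H_\aniso^r$-norm comparable to $\mu^{nt}$ while remaining mutually almost orthogonal as $N$ varies. This prevents any compact operator from approximating $\L^{nt}$ within error $\mu^{nt}$, giving the reverse inequality. The principal obstacle is the stationary-phase analysis of the kernel of $\Fourier\L^t\Fourier^{-1}$ on region (iii): one must control higher-order corrections uniformly in $t$ so that the multiplicative constant in front of $(\Lambda^t+\varepsilon)^n$ does not blow up with $n$. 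The crucial point making this feasible is that $(DF^t)^*$ preserves $\alpha$ and therefore acts almost isometrically on the flow cone, so that after factoring out the amplitude $g^t/\sqrt{\det DF^t|_{E_u}}$ the remaining operator is essentially unitary on wave packets aligned with $\alpha$, leaving the transverse hyperbolicity of $(DF^t)^*$ and the weight $\weight$ to absorb all error terms.
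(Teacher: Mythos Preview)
Your outline is correct and mirrors the paper's architecture closely: the three-region decomposition (compact near the zero section, hyperbolic off the $\alpha$-cone, central along the $\alpha$-cone), the FBI/wave-packet viewpoint, and the lower bound via an infinite family of almost orthogonal wave packets $\Phi_{x_\dag,\,n_k\alpha_0}$ at a maximizing point are exactly what the paper does.

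Where the paper's execution differs from your sketch is in two technical choices that you may find clarifying. First, rather than working globally on $T^*M$, the paper immediately localizes via a finite Darboux atlas, reducing the main theorem to a statement (Theorem~\ref{th:localmain}) about a single $\lambda$-hyperbolic contact diffeomorphism on $\real^{2d+1}$; the constant $C_0$ there is independent of $F$ and $g$, which is what replaces your concern about uniformity in $t$ (one simply applies the local bound to $F^{nt}$ and lets $C_0^{1/n}\to 1$). Second, and more substantively, the central-part estimate is not obtained by a stationary-phase or direct wave-packet calculation as you suggest. Instead the paper performs an almost-orthogonal decomposition in the flow frequency $\xi_0$ and in the base variable, then approximates each piece by the transfer operator of the \emph{linearization} $\mathrm{Id}\oplus B$. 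For that linear model the FBI lift $\widehat{\L}_B$ is analyzed via a coordinate change $Z$ on $\real^{2d}\oplus\real^{2d}$ (due to Faure) that identifies it with a tensor product $\widehat{\L}_0\otimes\overline{\widehat{\L}_0}$; the factor $\overline{\widehat{\L}_0}$ has $L^2$-norm exactly $1$, and the weighted factor contributes precisely $d(B)^{-1/2}\sim(\det DF|_{E^+})^{-1/2}$. This tensor-product mechanism is what produces the sharp constant, and it is somewhat more delicate than the ``essentially unitary after factoring out the amplitude'' heuristic you give, though your heuristic correctly predicts the answer.
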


The conclusion of the theorem implies in particular that, if $t$ and $r$ are sufficiently large, the spectral set of the operator $\L^t:H_\aniso^r(M)\to H_{\aniso}^r(M)$ on the outside of the disk $|z|\le \Lambda^t$ consists of discrete eigenvalues with finite multiplicities. (See \cite{Tsujii2009} for implication of this conclusion on decay of correlations.)  

Note that, if $F^t$ is the geodesic flow of a closed surface with constant negative ($\equiv -1$) curvature and $g^t\equiv 1$, we have $\Lambda=e^{-1/2}$ and the bound $\Lambda^t=e^{-t/2}$ on the essential spectral radius of $\L^t$ in the theorem above  is exactly the optimal one expected from the classical result of Selberg\cite{McKean72} and the heuristic relation between the zeros of dynamical zeta functions and the  spectrum of the generator of $\L^t$. (See also \cite{Moore,Ratner}.)
We expect that the bound in the theorem above is optimal as far as we consider the action of $\L^t$ on Banach spaces $B$ such that $C^\infty(M)\subset B\subset (C^\infty(M))'$. 
Note that the peripheral eigenvalues outside the essential spectral radius is essentially independent of the choice of function spaces. (See \cite[Appendix A]{BaladiTsujii08}.)

In the following sections, we proceed as follows. 
In Section \ref{s:redloc}, we set up a Darboux coordinate system on $M$ and reduce the main theorem to the corresponding claim (Theorem \ref{th:localmain}) about transfer operators on the Euclidean space $\real^{2d+1}$ equipped with a standard contact form $\alpha_0$.  
In Section \ref{s:FBI}, we discuss about the FBI transform on the Euclidean space $\real^{2d}$. 
The FBI transform decomposes functions on $\real^{2d}$ into Gaussian wave packets parametrized by points in the  cotangent bundle $T^*\real^{2d}=\real^{2d}\oplus \real^{2d}$ and was used in semi-classical analysis, by Sj{\"o}strand\cite{Sjostrand} and Martinez\cite{Martinez}, in order to study microlocal properties of functions.
In Section \ref{s:pFBI}, we introduce the partial FBI transform on the Euclidean space $\real^{2d+1}$, which is roughly a combination of the Fourier transform in the direction of the flow and the (scaled) FBI transform in the transversal directions. We then define the anisotropic Sobolev space $H^r_{\aniso}$ by using the partial FBI transform and a weight function $\weight^r_\aniso$.
In the last subsection, we also study the action of a linear transformation with some hyperbolic property on the anisotropic Sobolev space $H^r_{\aniso}$ and prove an analogue of 
Theorem \ref{th:localmain} in this simple case. 
In Section \ref{s:lt}, we will decompose the transfer operator on the Euclidean space $\real^{2d+1}$ into three parts, {\em the compact, hyperbolic and central part}. 
The compact part concerns the wave packets with low frequency and is  a compact operator as its name indicates. 
The hyperbolic part concerns the wave packets that have high frequency in the transversal directions to the flow.  In Section \ref{s:hyp}, we will show that the operator norm of the hyperbolic part is bounded by an arbitrarily small constant if we take a large parameter $r$ in the definition of $H^r_{\aniso}$. Thus the central part turns out to be most essential for our argument. We study the central part in Section~\ref{s:ctr}. We will decompose the central part into small pieces so that each piece can be well approximated by the simple operator studied in the last subsection of Section~\ref{s:pFBI}. 
In Section~\ref{sec:lb}, we give a lower bound for the essential spectral radius that coincides with the upper bound, finishing the proof of the main theorem.

\medskip
\noindent{\bf Acknowledgement.} 
The author expresses his gratitude to the Mittag-Leffler institute and  Prof.\ M.\ Benedicks (KTH) for hospitality during his stay at the institute for the program "Dynamics and PDE", where he started to write a preliminary version of this paper. 
The author also thank F.~Faure (Fourier Institute) for teaching him about basic facts about the FBI transform and the ingenious change of coordinates (see Subsection \ref{ss:sep}) that he introduced  in the paper\cite{Faure07}.

\section{Local properties of contact Anosov flow}\label{s:redloc}
\subsection{Darboux local coordinates}
Let $F^t:M\to M$ be a contact Anosov flow and $\alpha$ the contact form on $M$ preserved by the flow. By multiplying $\alpha$ by some smooth function, we may and do suppose that $\alpha(V)\equiv 1$ for the generating vector field $V$ of the flow $F^t$. 

On the $2d+1$ dimensional Euclidean space $\real^{2d+1}$, the standard contact form $\alpha_0$
 is defined by 
\begin{align}
\alpha_0&= dx_0 +\left(\sum_{j=1}^{d} x_j\cdot dx_{d+j}-x_{d+j}\cdot dx_j\right).
\end{align}
Note that the vector field $\partial_{x_0}:=\partial/\partial x_0$ is characterized by the conditions
\begin{equation}\label{Reeb}
\alpha_0(\partial_{x_0})\equiv 1, \qquad d\alpha_0(\partial_{x_0}, \cdot)\equiv 0.
\end{equation}
For a real number $\theta>0$, we consider the cones 
\begin{align*}
\cone_+(\theta)&=\{x=(x_0, x^+,x^-)\in \real^{2d+1}\mid \|x^-\|\le \theta\|x^+\|\},\\
\cone_-(\theta)&=\{x=(x_0, x^+, x^-)\in \real^{2d+1}\mid \|x^+\|\le \theta\|x^-\|\}
\end{align*}
where $\|\cdot \|$ is the Euclidean norm and, for $x=(x_i)_{i=0}^{2d}\in \real^{2d+1}$, we set
\[
x^+=(x_1,x_2, \cdots, x_d), \qquad  x^-=(x_{d+1}, x_{d+2}, \cdots , x_{2d}).
\]
\begin{definition}
For $\lambda>1$, a 
$C^\infty$ diffeomorphism $F:U\to U'=F(U)$ is said to be a $\lambda$-hyperbolic contact diffeomorphism if it satisfies the following conditions:
\begin{itemize}
\setlength{\itemsep}{6pt plus 1pt minus 1pt}
\item[(H1)] $U$ and $U'$ are open subsets contained in the unit disk in $\real^{2d+1}$, 
\item[(H2)] $F$ preserves the standard contact form $\alpha_0$, and 
\item[(H3)] $F$  is hyperbolic in the sense that 
\begin{align*}
&DF(\real^{2d+1}\setminus \cone_-(1/10))\subset \cone_+(1/10),\\
&DF^{-1}(\real^{2d+1}\setminus  \cone_+(1/10))\subset \cone_-(1/10)
\end{align*}
and that
\begin{align*}
&\|DF(\pi_\dag(v))\|\ge \lambda \|\pi_\dag(v)\|\quad\mbox{ for $v\in \real^{2d+1}\setminus  \cone_-(1/10)$, }\\
&\|DF^{-1}(\pi_\dag(v))\|\ge \lambda \|\pi_\dag(v)\|\quad \mbox{ for $v\in \real^{2d+1}\setminus  \cone_+(1/10)$}
\end{align*}
where $\pi_\dag:\real^{2d+1}\to \{0\}\oplus \real^{2d}$ denotes the orthogonal projection to the components other than the first one.
\end{itemize}
\end{definition}

From Darboux theorem for contact structure\cite[pp.168]{Aebischer}, it follows
\begin{lemma}[{\cite[Proposition~2.2]{Tsujii2009}}]\label{lm:Darboux}
There exist a finite system of coordinate charts 
\[
\left\{\kappa_i:V_i\to U_i=\kappa(V_i)\subset \real^{2d+1}\right\}_{i=1}^{\ell}
\]
on $M$ and a constant $c>0$ such that, for $1\le i, j\le \ell$, 
\begin{enumerate}
\item $\alpha=\kappa_i^*(\alpha_0)$ on $V_i$\quad  and \quad $\partial_{x_0}=(\kappa_i)_*(V)$ on $U_i$.
\item For sufficiently large $t>0$, the diffeomorphism induced on the local charts
\[
F_{ij}^t:=\kappa_j\circ F^t\circ \kappa_i^{-1}:U_{ij}^t\to F^t_{ij}(U^t_{ij})\quad \mbox{where }\; U_{ij}^t:=\kappa_i(V_{i}\cap F^{-t}(V_j))
\]
 is a $c \cdot \lambda_0^t$-hyperbolic contact diffeomorphism, provided $U_{ij}^t\neq\emptyset$,
\end{enumerate}
where $\lambda_0>1$ is the constant in the definition of Anosov flow. 
\end{lemma}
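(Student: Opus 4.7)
The plan is to obtain the charts in two stages: first apply the contact Darboux theorem to fulfill~(1), then exploit the remaining freedom---post-composition with a linear contactomorphism of $(\real^{2d+1},\alpha_0)$ fixing the origin---to align the hyperbolic splitting $E_u\oplus E_s$ with the coordinate planes, so that~(H3) will follow from the Anosov property of $F^t$ for $t$ large.

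For the first stage, the normalization $\alpha(V)\equiv 1$ together with $F^t$-invariance of $\alpha$ and Cartan's formula imply $\iota_V d\alpha=0$, so $V$ is the Reeb field of $\alpha$. The cited Darboux theorem then produces, around any $p\in M$, a chart $\kappa$ with $\kappa^*\alpha_0=\alpha$; composing with the $\alpha_0$-preserving affine transformations $(x_0,x^+,x^-)\mapsto (x_0+\sum(b_{d+j}x_j-a_jx_{d+j}),\,x^++a,\,x^-+b)$ arranges $\kappa(p)=0$, and restricting the domain places the image inside the unit disk. Since $\partial_{x_0}$ is the Reeb field of $\alpha_0$ by~\eqref{Reeb} and Reeb fields are contact invariants, $\kappa_*V=\partial_{x_0}$ automatically.

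For the second stage, $E_u(p)$ and $E_s(p)$ are Lagrangian subspaces of $(\ker\alpha_p,\,d\alpha_p|_{\ker\alpha_p})$ (as already observed in the introduction), exactly as the coordinate planes $\{0\}\oplus\real^d\oplus\{0\}$ and $\{0\}\oplus\{0\}\oplus\real^d$ are Lagrangian for $(\ker(\alpha_0)_0,\,d(\alpha_0)_0|_{\ker(\alpha_0)_0})$. Since the linear symplectic group acts transitively on ordered pairs of transverse Lagrangians, a suitable element of $\mathrm{Sp}(2d,\real)$, extended to a linear contactomorphism of $(\real^{2d+1},\alpha_0)$ fixing the $x_0$-axis and post-composed with $\kappa$, makes $(D\kappa)_p E_u(p)=\{0\}\oplus\real^d\oplus\{0\}$ and $(D\kappa)_p E_s(p)=\{0\}\oplus\{0\}\oplus\real^d$. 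By continuity of the subbundles and compactness of $M$, I then extract a finite cover $\{(\kappa_i,V_i)\}_{i=1}^\ell$ on which $(D\kappa_i)E_u\subset\cone_+(\theta_0)$ and $(D\kappa_i)E_s\subset\cone_-(\theta_0)$ for some $\theta_0$ much smaller than $1/10$.

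With~(1) and the alignment secured, (H1) and (H2) are immediate. For~(H3), take $v\notin\cone_-(1/10)$ at $x\in U_{ij}^t$; the $\theta_0$-alignment ensures that $D\kappa_i^{-1}(v)$ has an $E_u$-component dominating its $E_s$-component, which the Anosov condition expands by a factor at least $\lambda_0^t/C$; pushing forward by $D\kappa_j$ at $F^t(p)$ places the result in $\cone_+(1/10)$ with norm at least $c\lambda_0^t\|\pi_\dag(v)\|$ once $t$ is large enough to absorb the uniformly bounded chart distortion. The statements for $DF^{-t}_{ij}$ follow identically. The main obstacle I anticipate is the alignment step: without the Lagrangian character of $E_u,E_s$ and the transitivity of the symplectic action on Lagrangian pairs, one could not simultaneously preserve $\alpha_0$ and align the hyperbolic splitting with the coordinate planes, so uniform cone control across any finite cover would fail.
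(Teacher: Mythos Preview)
Your argument is correct and supplies precisely the details the paper omits: the paper does not prove this lemma at all but merely invokes the contact Darboux theorem and cites \cite[Proposition~2.2]{Tsujii2009} for the full statement. Your two-stage construction---Darboux chart followed by post-composition with a linear symplectic map (which, as you use, preserves $\alpha_\dag$ and hence extends to a linear contactomorphism of $(\real^{2d+1},\alpha_0)$) to align $E_u,E_s$ with the coordinate Lagrangians---is the natural route and matches what one expects the cited proof to contain; the cone and expansion estimates in (H3) then follow from the Anosov condition exactly as you outline.
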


We henceforth fix a system of coordinate charts as above and define a family  of transfer operators on the local charts as follows. 
Take functions $\rho_i\in C_0^\infty(U_i)$, $1\le i\le \ell$, so that the family $\{\rho_i\circ \kappa_i\}$ of functions on $M$ is a $C^\infty$ partition of unity subordinate to the open covering $\{V_i\}$ and take another family of functions $\tilde{\rho}_i\in C_0^\infty(U_i)$ so that $\tilde{\rho}_i(x)\in [0,1]$ and that $\tilde{\rho}_i(x)\equiv 1$ on the support of $\rho_i$. 
The transfer operators $
\L^t_{ij}:C^\infty(U_j)\to C^\infty_0(U_i)$ for $1\le i, j\le \ell$ and $t\in \real$ is then defined by
\[
\L^t_{ij} u(x)=g^t_{ij}(x)\cdot u(F^t_{ij}(x))
\]
where
\[
g^t_{ij}(x)=\rho_i(x)\cdot \tilde{\rho}_j(F^t_{ij}(x))\cdot g^t(\kappa^{-1}_i(x)).
\]
These transfer operators as a whole form the operator 
\[
\mathbf{L}^t:\bigoplus_{i=1}^\ell \, C^\infty_0(U_i)\to \bigoplus_{i=1}^\ell \, C^\infty_0(U_i),
\qquad
\mathbf{L}^t((u_i)_{i=1}^\ell)=\left(\sum_{j}\L^t_{ij}u_j\right)_{i=1}^\ell.
\]
By the definitions above,  we have the commutative  diagram of operators:
\begin{equation}\label{cdls}
\begin{CD}
\bigoplus_{i=1}^\ell C^\infty_0(U_i)@>{\mathbf{L}^t}>> \bigoplus_{i=1}^\ell C^\infty_0(U_i)\\
@A{\iota}AA @A{\iota}AA\\
C^\infty(M)@>{\L^t}>> C^\infty(M)
\end{CD}
\end{equation}
where
\[
\iota:C^\infty(M)\to \bigoplus_{i=1}^\ell C^\infty_0(U_i),\qquad 
\iota(u)=\big(\rho_i\cdot u\circ \kappa_i^{-1}\big)_{i=1}^\ell.
\]

In Section \ref{s:pFBI}, we will  introduce a scale of  Hilbert spaces $(\H^r_{\aniso},\|\cdot\|_r)$ for $r\in \real$ such that \[
H^r\subset H^r_{\aniso}\subset H^{-r},
\]
where $H^r$ is the Sobolev space  of order $r$ on $\real^{2d+1}$. 
And we will prove
\begin{theorem}
\label{th:localmain}
For any given $r>d$, there exists a constant $C_0>0$ such that, for any \hbox{$\lambda$-hyperbolic} contact diffeomorphism $F:U\to U'$ with $\lambda$ sufficiently large  and for any $g\in C_0^\infty(U)$, the  transfer operator 
\begin{equation}\label{eqn:lfg}
\L u=g\cdot (u\circ F)
\end{equation}
extends naturally to a bounded operator $
\L:\H^r_{\aniso}\to \H^r_{\aniso}$ and the essential spectral radius of the extension is bounded by
\begin{equation}\label{eqn:bound}
 C_0\cdot \max\{ \Lambda(F,g), \|g\|_\infty\cdot \lambda^{-r}\cdot \Delta(F,g)\},
\end{equation}
 where $\Lambda(F,g)$ and $\Delta(F,g)$ are defined  as
\[
\Lambda(F,g)=\max_{x\in \mathrm{supp}(g)}\;\left(  \frac{|g(x)|}{ \sqrt{|\det (DF_x|_{E^+})|}}\right)
\]
and
\[
\Delta(F,g)=\max_{x\in \mathrm{supp}(g)}\; \sqrt{|\det (DF_x|_{E^+})|},
\]
with setting $E^+=\{ (x_0,x^+,x^-)\in \real^{2d+1}\mid x^-=0\}$.
\end{theorem}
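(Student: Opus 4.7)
The plan is to conjugate $\L$ by the partial FBI transform $\pFBI$ from Section~\ref{s:pFBI} so that the spectral problem on $\H^r_\aniso$ becomes the analysis of a phase-space integral operator $\pFBI\L\pFBI^*$ acting on a weighted $L^2$-space on $T^*\real^{2d+1}$ with weight $\weight^r_\aniso$. The kernel of this integral operator is essentially concentrated near the graph of the symplectic lift of $F$ (twisted by multiplication by $g$), so the problem reduces to estimating operator norms of integral operators whose behavior reflects the dynamics of $F$ on cotangent directions.

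I would then carry out the three-part decomposition $\L=\L_\cpt+\L_\hyp+\L_\ctr$ announced in the introduction. The compact part $\L_\cpt$ localizes to wave packets of bounded frequency and is compact because restriction to a bounded frequency window produces smooth functions on a compact set; it contributes nothing to the essential spectrum. The hyperbolic part $\L_\hyp$ localizes to wave packets whose frequency has a substantial component transverse to the $x_0$-direction. By the hyperbolicity assumption (H3) the cotangent action of $F$ moves such wave packets into regions where the weight $\weight^r_\aniso$ has dropped by at least a factor $\lambda^{-r}$, since $\weight^r_\aniso$ is built precisely to track the hyperbolic expansion rates. A Schur-type estimate then gives
\[
\|\L_\hyp\| \le C \cdot \|g\|_\infty \cdot \lambda^{-r} \cdot \Delta(F,g),
\]
the Jacobian factor $\Delta(F,g)$ appearing from the change of variables on the $E^+$-direction.

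The core of the work is the central part $\L_\ctr$, concerning wave packets with frequency nearly aligned with $\alpha_0$. I would cover $\supp(g)$ by small plaques $\{P_\alpha\}$ and use a smooth partition of unity to write $\L_\ctr = \sum_\alpha \L_{\ctr,\alpha}$. On each plaque $F$ is well approximated by its affine linearization and $g$ is nearly constant, so $\L_{\ctr,\alpha}$ is microlocally close to the linear hyperbolic contact model studied in the last subsection of Section~\ref{s:pFBI}, whose norm on $\H^r_\aniso$ is bounded by $C\cdot |g(x_\alpha)|/\sqrt{|\det DF_{x_\alpha}|_{E^+}|}$. Maximizing over $\supp(g)$ produces the bound $C_0\cdot \Lambda(F,g)$, and almost-orthogonality of wave packets in distinct plaques---a consequence of the Gaussian decay of FBI coherent states combined with the symplectic geometry of $\alpha_0$---ensures that summing the local estimates costs only a uniform constant. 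The linearization errors show up as compact remainders that can be absorbed into $\L_\cpt$.

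Combining the three estimates, $\L$ splits as a compact operator plus a bounded operator of norm at most $C_0\cdot \max\{\Lambda(F,g),\|g\|_\infty \lambda^{-r}\Delta(F,g)\}$, and Nussbaum's formula gives the claimed bound (\ref{eqn:bound}). The main obstacle is the central-part analysis: the linearization errors must simultaneously be small enough to produce compact remainders (so as not to inflate the essential spectral radius) and controllable enough to preserve the sharp constant $\Lambda(F,g)$. This is where the precise $\xi$-dependent scaling of wave packets in the partial FBI transform and the exact form of the anisotropic weight $\weight^r_\aniso$ become essential, enforcing almost-orthogonality at the natural scale dictated by the symplectic structure of $\alpha_0$.
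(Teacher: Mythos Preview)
Your outline is correct and follows essentially the same three-part decomposition as the paper. Two implementation details differ from the paper and are worth flagging: first, in the paper the factor $\Delta(F,g)$ arises not from the hyperbolic part (Proposition~\ref{pp:hyp} gives only $C_0\|g\|_\infty\lambda^{-r}$) but from the central part, since the linear model of Lemma~\ref{lm:linFBI} already yields the two-term bound $C_0\max\{d(B)^{-1/2},\,d(B)^{1/2}\lambda^{-r}\}$; second, the paper does not treat the linearization error as a compact remainder but instead makes it small in operator norm (Lemma~\ref{lm:diff2}) by choosing the low-frequency cutoff $N$ large, and crucially the spatial plaques used for the linearization have \emph{frequency-dependent} size $|k|^{-1+\delta}$ (with $k^2\approx\xi_0$) rather than fixed size---a fixed partition would not keep the quadratic error $|F_\dag(y)-B(y)|$ below the wave-packet scale $\langle\xi_0\rangle^{-1/2}$ at high frequency.
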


For a bounded linear operator $L:B\to B$ on a Banach space $B$, its essential operator norm $\|L\|_{\mathrm{ess}}$ and essential spectral radius $\rho_{\mathrm{ess}}(L)$ are respectively the infimum of its operator norm and its spectral radius  under perturbation by compact operators, that is, 
\begin{align*}
&\|L\|_{\mathrm{ess}}:=\inf\{ \|L-K\|\mid K:B\to B\mbox{ is a compact operator.}\},\\
&\rho_{\mathrm{ess}}(L):=\inf\{ \rho(L-K)\mid K:B\to B\mbox{ is a compact operator.}\}
\end{align*}
By definition the essential spectral radius of a bounded linear operator does not exceed its essential operator norm. 

We show that the claims of the main theorem except for the lower bound for the essential spectral radius follow from Theorem \ref{th:localmain}. Consider the (unique) norm on $C^\infty(M)$ such that the injection $\iota$ is an isometric injection into $\bigoplus_{i=1}^\ell H^r_{\aniso}$ with respect to it and let $H^r_{\aniso}(M)$ be the completion of $C^\infty(M)$ with respect to that norm. From the former claim of Theorem \ref{th:localmain},  the commutative diagram (\ref{cdls}) extends naturally  to 
\[
\begin{CD}
\bigoplus_{i=1}^\ell \H^r_{\aniso}@>{\mathbf{L}^t}>> \bigoplus_{i=1}^\ell \H^r_{\aniso}\\
@A{\iota}AA @A{\iota}AA\\
\H^r_{\aniso}(M)@>{\L^t}>> \H^r_{\aniso}(M)
\end{CD}
\]
in which $\iota$ is an isometric embedding. 
From the latter claim of Theorem \ref{th:localmain}, we see that the essential operator norm of $\mathbf{L}^t:\bigoplus_{i=1}^\ell \H^r_{\aniso}\to\bigoplus_{i=1}^\ell \H^r_{\aniso}$ is bounded by
\begin{equation}\label{eqn:bd}
C_0\cdot  \max_{i,j}\left(\max\{ \Lambda(F_{ij}^t,g_{ij}^t), \|g_{ij}^t\|_\infty\cdot\lambda_0^{-rt}\cdot \Delta(F_{ij}^t,g_{ij}^t)\}\right)
\end{equation}
and so is that of $\L^t:H^r_{\aniso}(M)\to H^r_{\aniso}(M)$. 
Note that (\ref{eqn:bd}) is bounded by
\[
C_0 \cdot \max\left\{ \max_{x\in M} \frac{|g^t(x)|}{\sqrt{\det DF^t_x|_{E_u}}}, \|g^t\|_{\infty}\cdot \lambda_0^{-rt}\cdot  \max_{x\in M} \sqrt{\det DF^t_x|_{E_u}}\right\}
\]
with possibly different constant $C_0$ and  that the latter term in $\max\{\cdot\}$ above is smaller than the former if $r$ is sufficiently large. Therefore, by multiplicative property of essential spectral radius, we conclude the estimate
\begin{align*}
\rho_{\mathrm{ess}}(\L^t|_{\H^r_{\aniso}(M)})&=\lim_{n\to \infty} \|\L^{nt}:\H^r_{\aniso}(M)\to \H^r_{\aniso}(M)\|^{1/n}_{\mathrm{ess}}\le \Lambda^t.
\end{align*}

\subsection{Affine transformations and diffeomorphisms preserving the standard contact form $\alpha_0$}\label{ss:localdiffeo}
For each point $c=(c_0, c^+, c^-)\in \real^{2d+1}$, let $A_c:\real^{2d+1}\to \real^{2d+1}$ be  the affine transformation
\begin{equation}\label{eqn:ac}
 A_c(x_0,x^+,x^-)=\left(x_0+c_0- c^+ \cdot x^-+ c^-\cdot  x^+, \; x^++c^+,  \; x^-+c^-\right), 
\end{equation}
which preserves the standard contact form $\alpha_0$. 
The  totality $\Af=\{A_c\}_{c\in \real^{2d+1}}$ of such transformations form a transformation group that acts on $\real^{2d+1}$ transitively. In the following, most of our constructions (including that of the anisotropic Sobolev spaces) will be invariant with respect to the action of the transformation group $\Af$.

Let  $F:U\to U'$ be a $C^\infty$ diffeomorphism between open subsets in $\real^{2d+1}$ preserving the standard contact form $\alpha_0$. Then it preserves also the vector field $\partial_{x_0}$ as it is characterized by the conditions (\ref{Reeb}). Thereby, setting $x_\dag=(x_1,x_2,\cdots, x_{2d})$ for $x=(x_0,x_1,\dots, x_{2d})$, we may write $F$ locally  as  
\begin{equation}\label{Fh}
F(x_0, x_\dag)=(x_0+f(x_\dag), F_\dag(x_\dag)),
\end{equation}
where $F_\dag:\real^{2d}\to\real^{2d}$ and $
f:\real^{2d}\to \real$ are a $C^\infty$ diffeomorphism and a $C^\infty$ function respectively. 
Let $\alpha_\dag$ and $\omega_\dag$ be  the differential forms on $\real^{2d}$   defined respectively by
\begin{align}\label{eqn:aomega}
\alpha_\dag&=\left( \sum_{j=1}^d x_j\cdot dx_{j+d}-x_{j+d}\cdot dx_j\right),\quad \omega_\dag=\frac{1}{2}d\alpha_\dag=\sum_{j=1}^d dx_j\wedge dx_{d+j}.
\end{align}
Then $F_\dag$ above preserves the symplectic form $\omega_\dag$ and the function $f$ is determined from $F$ by the relation 
\[
df=DF^*(dx_0)-dx_0=\alpha_\dag-F_{\dag}^*(\alpha_\dag)
\]
up to difference by a constant. 

Suppose $F(0)=0$ in addition. 
Then, for any $1\le i,j\le 2d$, we have 
\begin{equation}\label{eqn:basic}\frac{\partial}{\partial x_i} f(0)=0,\qquad \frac{\partial^2}{\partial x_i \partial x_j}f(0)=0.
\end{equation}
These relations (in particular the latter) are not very obvious but 
the proof is straightforward.  We refer \cite[Lemma 4.1]{Tsujii2009} for the detail. 
The property (\ref{eqn:basic})  implies that there exits a constant $C>0$ such that, for $z\in \real^{2d+1}$ sufficiently close to the origin $0$ and for  
$\xi=(\xi_0, \xi^+, \xi^-)\in \real^{2d+1}$, one has the estimate
\begin{align*}
\|{}^tDF_z(\xi)-{}^tDF_{0}(\xi)\|&\le \|{}^tDf_{z}(\xi_0)\|+\|{}^t(DF_\dag)_z(\xi_\dag)-{}^t(DF_\dag)_{0}(\xi_\dag)\|\\
 &\le C(|\xi_0|\cdot \|z\|^2+\|\xi_\dag\|\cdot \|z\|).
\end{align*}
The last estimate  is applicable to the germ of $F$ at each point, by means of changes of  coordinates by affine transformations in $\Af$. Thus we obtain the following proposition, which stands without the assumption $F(0)=0$.
\begin{proposition}\label{pp:basic}
If a $C^\infty$ diffeomorphism $F:U\to U'$ between open subsets in $\real^{2d+1}$ preserves the standard contact form $\alpha_0$ and if $K$ is a compact subset of $U$, there exists a constant $C>0$ such that 
\[
\|{}^tDF_y(\xi)-{}^tDF_{y'}(\xi)\|\le C(|\xi_0|\cdot\|y_\dag-y'_\dag\|^2+ \|\xi-\xi_0\cdot \alpha_0(F(y))\| \cdot \|y_\dag-y'_\dag\|)
\]
for any two points $y=(y_0,y_\dag)$, $y'=(y'_0, y'_\dag)$ in $K$ and any $\xi=(\xi_0, \xi_\dag)\in \real^{2d+1}$.
\end{proposition}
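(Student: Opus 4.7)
The strategy is to reduce to the estimate derived from (\ref{eqn:basic}) in the paragraph preceding the statement, by translating both $y'$ and $F(y')$ to the origin via suitable affine transformations from $\Af$. Since $\Af$ acts transitively on $\real^{2d+1}$ and every element of $\Af$ preserves $\alpha_0$, for each pair $y,y'\in K$ I set $\phi:=A_{y'}$ and $\psi:=A_{F(y')}$ and define $\tilde F:=\psi^{-1}\circ F\circ \phi$. Then $\tilde F(0)=0$ and $\tilde F$ still preserves $\alpha_0$, so the estimate derived from (\ref{eqn:basic}) applies to $\tilde F$. As $y'$ ranges over the compact set $K$, the family of maps $\tilde F$ is $C^\infty$-bounded, so the resulting constant is uniform in $y'$.

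Since $\phi,\psi$ are affine with constant differentials, the chain rule applied to $F=\psi\circ\tilde F\circ \phi^{-1}$ gives
\[
{}^tDF_y(\xi)-{}^tDF_{y'}(\xi)={}^tD\phi^{-1}\bigl({}^tD\tilde F_z(\eta)-{}^tD\tilde F_0(\eta)\bigr),
\]
with $z=\phi^{-1}(y)$ and $\eta={}^tD\psi(\xi)$. By formula (\ref{eqn:ac}) the linear part of $A_c$ acts as the identity on the $\dag$-block, so $z_\dag=y_\dag-y'_\dag$; and a short computation of its transpose for $c=F(y')$ yields
\[
\eta_0=\xi_0,\qquad \eta_\dag=\bigl(\xi-\xi_0\,\alpha_0(F(y'))\bigr)_\dag,
\]
once one identifies the covector $\alpha_0$ at $p$ with $(1,-p^-,p^+)$ in coordinates.

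The key observation is that, by the skew-product form (\ref{Fh}), $D\tilde F_z$ depends only on $z_\dag$, so the local estimate from (\ref{eqn:basic}) for $\tilde F$ sharpens to
\[
\|{}^tD\tilde F_z(\eta)-{}^tD\tilde F_0(\eta)\|\le C\bigl(|\eta_0|\,\|z_\dag\|^2+\|\eta_\dag\|\,\|z_\dag\|\bigr).
\]
Combining this with the identifications above, the uniform bound $\|{}^tD\phi^{-1}\|\le C$ for $y'\in K$, and $z_\dag=y_\dag-y'_\dag$, I arrive at the desired inequality but with $\alpha_0(F(y'))$ in place of $\alpha_0(F(y))$. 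To swap the evaluation point, observe that $\alpha_0(p)$ depends only on $p_\dag$, and (\ref{Fh}) gives $F(y)_\dag=F_\dag(y_\dag)$, whence $\|\alpha_0(F(y))-\alpha_0(F(y'))\|\lesssim \|y_\dag-y'_\dag\|$; the resulting correction is an extra $|\xi_0|\,\|y_\dag-y'_\dag\|^2$ term, absorbed into the first summand.

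The main obstacle I expect is the linear-algebraic bookkeeping of the second paragraph, namely the explicit identification $\eta_\dag=(\xi-\xi_0\,\alpha_0(F(y')))_\dag$: this is a short but delicate verification linking the transpose of the linear part of $A_{F(y')}$ to the evaluation of the contact form at the translated point. Everything else---transitivity and $\alpha_0$-invariance of $\Af$, uniformity of constants over the compact $K$, and the sharpening $\|z\|\rightsquigarrow\|z_\dag\|$ afforded by (\ref{Fh})---is routine or a direct application of structure already laid out in the paper.
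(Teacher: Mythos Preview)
Your proposal is correct and follows precisely the route the paper sketches: conjugate by elements of $\Af$ to reduce to the case $\tilde F(0)=0$, apply the local estimate derived from (\ref{eqn:basic}), and then track how the covector transforms under ${}^tD\psi$ to produce the $\xi-\xi_0\,\alpha_0(F(\cdot))$ term. You have simply made explicit what the paper compresses into the single sentence ``applicable to the germ of $F$ at each point, by means of changes of coordinates by affine transformations in $\Af$''; the additional observations (the sharpening $\|z\|\to\|z_\dag\|$ via (\ref{Fh}) and the final swap $F(y')\to F(y)$ at the cost of an extra $|\xi_0|\,\|y_\dag-y'_\dag\|^2$) are exactly the details one needs to turn that sentence into a proof.
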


\section{The FBI transform}\label{s:FBI}

In this section, we introduce the FBI (Fourier-Bros-Iaglonitzer) transform and give a few basic facts related to it.  As the Fourier transform decomposes functions into simple wave functions, the FBI transform 
decomposes functions into Gaussian wave packets. 
We refer \cite[Ch.3]{Martinez} for general argument on the FBI transform. 
\subsection{FBI transform}
For a pair $(x,\xi)$ of points $x$ and\footnote{Maybe it is more natural to regard $\xi$ as an element of the dual space of $\real^{D}$.} $\xi$ in the $D$-dimensional Euclidean space $\real^{D}$, we consider a $C^\infty$ function 
\[
\phi_{x, \xi}:\real^{D}\to \complex, \qquad \phi_{x, \xi}(y)=a_D \cdot \exp\big(i\xi  (y-(x/2))-\|y-x\|^2/2\big)
\]
where $a_D=(2\pi)^{-D/2}\cdot \pi^{-D/4}$. 

\begin{remark}\label{remark:half}
In the textbook \cite{Martinez}, the function $\phi_{x,\xi}$ is defined similarly but with the term $i\xi  (y-(x/2))$ replaced by $i\xi  (y-x)$. Our definition above is  slightly more convenient  for our argument, though the difference is not essential. 
\end{remark}

The FBI transform $\FBI$ maps a function $u(x)$ on $\real^{D}$ to the function  
\[
\FBI u(x,\xi)=\int \overline{\phi_{x,\xi}(y)}\cdot  u(y) dy
\]
on $\real^{2D}=\real^D\oplus \real^D$. Its (formal) adjoint $\FBI^*$ maps a function $v(x,\xi)$ on $\real^{2D}$ to the function
\[
\FBI^* v(y)=\int \phi_{x,\xi}(y) \cdot v(x,\xi) dx d\xi
\]
on $\real^D$. The following are the basic properties of these transforms.
\begin{proposition}\label{pp24} {\rm (1)} 
The FBI transform $\FBI$ is a continuous linear operator from $\Sch(\real^{D})$ to $\Sch(\real^{2D})$, while $\FBI^*$  is a continuous linear operator from $\Sch(\real^{2D})$ to $\Sch(\real^{D})$, where $\Sch(\real^{D})$ denotes the Schwartz space on $\real^{D}$.\\
{\rm (2)} The composition $\FBI^*\circ \FBI:\Sch(\real^{D})\to \Sch(\real^{D})$ is the identity operator. Consequently the FBI transform $\FBI$ extends to an isometry from $L^2(\real^{D})$ to $L^2(\real^{2D})$.
\end{proposition}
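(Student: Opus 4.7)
The plan is to observe that $\FBI$ is a short-time Fourier transform with a Gaussian window and that the constant $a_D$ is tuned precisely so that this transform is an $L^2$-isometry. Setting $g(y)=a_D\, e^{-\|y\|^2/2}$, one has
\[
\FBI u(x,\xi) = e^{i\xi\cdot x/2}\int e^{-i\xi\cdot y}\, g(y-x)\, u(y)\, dy,
\]
and $\FBI^*$ is given by a symmetric oscillatory integral against the same Gaussian window. Both formulas are the starting point for everything below.

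For part~(1), I would bound an arbitrary Schwartz seminorm $\sup_{(x,\xi)}|x^\alpha \xi^\beta\,\partial_x^\gamma\partial_\xi^\delta\FBI u(x,\xi)|$ by a finite sum of Schwartz seminorms of $u$. Differentiation under the integral sign produces polynomials in $(y-x)$ and $\xi$ multiplied by $\phi_{x,\xi}(y)$; the polynomial in $(y-x)$ is absorbed harmlessly by the factor $e^{-\|y-x\|^2/2}$, while polynomial factors in $\xi$ are converted into $y$-derivatives of $g(y-x)u(y)$ by integrating by parts against $e^{-i\xi\cdot y}$. Polynomial growth in $x$ is split via $x=(x-y)+y$, the first piece being absorbed by the Gaussian and the second estimated by a Schwartz seminorm of $u$. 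The argument for $\FBI^*$ is the mirror image, with the roles of $(x,\xi)$ and $y$ swapped and with the rapid decay of $v\in\Sch(\real^{2D})$ replacing that of $u$.

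For part~(2), I would compute $\|\FBI u\|_{L^2(\real^{2D})}^2$ directly by Fubini and Plancherel: for each fixed $x$, the inner integral in $\xi$ is Plancherel's identity for the Fourier transform of $y\mapsto g(y-x)u(y)$ and equals $(2\pi)^D\int |g(y-x)|^2|u(y)|^2\,dy$; integrating in $x$ then yields $(2\pi)^D\|g\|_2^2\,\|u\|_2^2$. The normalization $a_D=(2\pi)^{-D/2}\pi^{-D/4}$ is chosen exactly so that $\|g\|_2^2=a_D^2\pi^{D/2}=(2\pi)^{-D}$, whence $\|\FBI u\|_2=\|u\|_2$. Polarizing gives $\langle \FBI u,\FBI v\rangle=\langle u,v\rangle$ for all $u,v\in \Sch(\real^{D})$, i.e.\ $\langle \FBI^*\FBI u,v\rangle=\langle u,v\rangle$; hence $\FBI^*\FBI=\Id$ on $\Sch(\real^{D})$. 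Density of $\Sch(\real^D)$ in $L^2(\real^D)$ together with the isometry bound just established extends $\FBI$ to an isometry from $L^2(\real^D)$ into $L^2(\real^{2D})$.

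The main obstacle, modest as it is, is the bookkeeping in part~(1): keeping track of how each derivative and each polynomial factor gets absorbed into a finite Schwartz seminorm of $u$ or $v$. Part~(2), once arranged as a short-time Fourier transform, is essentially a one-line Plancherel computation, so the identity $\FBI^*\FBI=\Id$ is not the hard step once the continuity statement of part~(1) is in hand.
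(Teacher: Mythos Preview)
Your proof is correct. Part~(1) is handled exactly as the paper does (the paper simply calls it ``a straightforward argument'' and leaves it at that), so your outline of the integration-by-parts bookkeeping is entirely adequate.

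For part~(2) you take a slightly different route from the paper. The paper computes the Schwartz kernel of $\FBI^*\circ\FBI$ directly: it writes the kernel as $\int \phi_{x,\xi}(y)\,\overline{\phi_{x,\xi}(y')}\,dx\,d\xi$, integrates first in $\xi$ to produce $(2\pi)^D\delta(y-y')$, then in $x$ to collapse the remaining Gaussian, and reads off $\delta(y-y')$. You instead recognize $\FBI$ as a Gaussian-windowed short-time Fourier transform, apply Plancherel in $\xi$ and Fubini in $x$ to get $\|\FBI u\|_2=\|u\|_2$, and then polarize to obtain $\FBI^*\FBI=\Id$. Both arguments are short and rigorous; the paper's order is ``identity $\Rightarrow$ isometry'' while yours is ``isometry $\Rightarrow$ identity.'' Your approach has the small advantage of making the role of the normalization $a_D$ transparent (it is exactly what makes $(2\pi)^D\|g\|_2^2=1$), while the paper's kernel computation is marginally more direct and is the version that generalizes most cleanly to the projection operator $\Pj=\FBI\circ\FBI^*$ treated immediately afterward.
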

\begin{proof}
The first claim can be proved by a straightforward argument. Below we prove the second. 
The Schwartz kernel of the operator $\FBI^*\circ \FBI$ is 
\[
\int \phi_{x,\xi}(y)  \cdot \overline{\phi_{x,\xi}(y')}\;  dx \, d\xi.
\]
Performing integration with respect to $\xi$ and $x$ in turn, we see that this equals
\[
a_D^2\cdot (2\pi)^{D}\cdot \delta(y-y')\cdot \int  \exp(-\|y-x\|^2) \, dx=
a_D^2\cdot (2\pi)^{D}\cdot \pi^{D/2} \delta(y-y')=\delta(y-y').
\]
Clearly this implies that $\FBI^*\circ \FBI$ is the identity operator. 
\end{proof}

Note that, if we put $v(x,\xi)=\FBI u(x,\xi)$ for $u\in \Sch(\real^{D})$, the latter claim of the proposition above implies the following expression  of $u$ as a superposition of the wave packets $\phi_{x,\xi}(\cdot)$:
\[
u(y)=\FBI^* v(y)=\int v(x,\xi)\cdot \phi_{x,\xi}(y)\, dx d\xi.
\]
  
\subsection{The projection operator $\Pj$}

\begin{proposition} The composition $\Pj:=\FBI\circ \FBI^*:\Sch(\real^{2D})\to \Sch(\real^{2D})$ extends to the orthogonal projection $\Pj:L^2(\real^{2D})\to L^2(\real^{2D})$ to the closed subspace 
\[
\FBI(L^2(\real^{D}))=\{ \FBI u\in L^2(\real^{2D})\mid u\in L^2(\real^{D})\}.
\]
\end{proposition}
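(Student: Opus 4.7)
The plan is to derive the proposition as a direct consequence of Proposition \ref{pp24}, using only standard Hilbert space theory. First I would extend $\Pj$ to all of $L^2(\real^{2D})$: by Proposition \ref{pp24}(2), $\FBI$ extends to an isometry $L^2(\real^{D})\to L^2(\real^{2D})$, so its Hilbert-space adjoint is a bounded operator $L^2(\real^{2D})\to L^2(\real^{D})$ of operator norm $1$. That Hilbert-space adjoint agrees with the Schwartz-space operator $\FBI^*$ defined in Subsection 4.1 (both satisfy the formal adjoint identity $\langle \FBI u,v\rangle=\langle u,\FBI^* v\rangle$ for Schwartz $u,v$, and Schwartz functions are dense), so $\Pj=\FBI\circ \FBI^*$ extends naturally to a bounded operator on $L^2(\real^{2D})$.

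Next I would verify the two defining properties of an orthogonal projection. Self-adjointness is automatic: $(\FBI\FBI^*)^*=\FBI\FBI^*$. Idempotency follows immediately from $\FBI^*\circ \FBI=\Id$ in Proposition \ref{pp24}(2), since $\Pj^2=\FBI(\FBI^*\FBI)\FBI^*=\FBI\cdot \Id\cdot \FBI^*=\Pj$. A bounded self-adjoint idempotent on a Hilbert space is an orthogonal projection onto its range.

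Finally I would identify the range with $\FBI(L^2(\real^{D}))$. Since $\FBI$ is an isometry, its image is automatically a closed subspace of $L^2(\real^{2D})$. The inclusion $\mathrm{Range}(\Pj)\subseteq \FBI(L^2(\real^{D}))$ is tautological from the definition $\Pj=\FBI\FBI^*$. For the reverse inclusion, any $v=\FBI u$ with $u\in L^2(\real^{D})$ satisfies $\Pj v=\FBI(\FBI^*\FBI)u=\FBI u=v$, so $v\in \mathrm{Range}(\Pj)$. I do not anticipate any genuine obstacle in this argument; the entire content is a one-line consequence of the identity $\FBI^*\circ \FBI=\Id$ already proved in Proposition \ref{pp24}, together with the abstract Hilbert space fact about bounded self-adjoint idempotents.
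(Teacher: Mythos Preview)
Your proposal is correct and follows essentially the same route as the paper: both derive $\Pj^*=\Pj$ and $\Pj^2=\Pj$ directly from $\FBI^*\circ\FBI=\Id$ (Proposition~\ref{pp24}), and both identify the range with $\FBI(L^2(\real^{D}))$ via the two inclusions $\Pj v=v\Rightarrow v=\FBI(\FBI^*v)$ and $v=\FBI u\Rightarrow \Pj v=\FBI(\FBI^*\FBI)u=v$. Your added remark that the Hilbert-space adjoint of the isometric extension of $\FBI$ coincides with the Schwartz-space $\FBI^*$ is a small extra justification the paper leaves implicit, but otherwise the arguments are the same.
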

\begin{proof}
From the definition of $\Pj$ and Proposition \ref{pp24}, we have $\Pj=\Pj^*$ and $\Pj\circ \Pj=\Pj$. 
It remains to prove that 
\[
\FBI(L^2(\real^{D}))=\{ u\in L^2(\real^{2D})\mid \Pj u=u\}.
\]
If $u\in L^2(\real^{2D})$ satisfies $\Pj u=u$, it belongs to $\FBI(L^2(\real^{D}))$ because  $u=\Pj u=\FBI(\FBI^* u)$. Conversely, if $u\in \FBI(L^2(\real^{D}))$, we can take $v\in L^2(\real^{D})$ such that $u=\FBI v$ and obtain $\Pj u=\FBI\circ \FBI^*\circ \FBI v=\FBI v=u$ from Proposition \ref{pp24} (2).
\end{proof}
The projection operator $\Pj$ above is an integral transform
\[
\Pj u(x,\xi)=\int K_{\Pj}(x,\xi;x',\xi')\, u(x',\xi')\, dx' d\xi'
\]
with the kernel
\begin{equation}
K_{\Pj}(x,\xi;x', \xi')= \int \overline{\phi_{x, \xi}(y)}\cdot \phi_{x',\xi'}(y) dy.
\end{equation}
If we perform the (Gaussian) integration in $K_{\Pj}(\cdot)$, we see 
\[
K_{\Pj}(x,\xi;x', \xi')=(2\pi)^{-D/2}\cdot \exp
\left(
- \frac{i\cdot \Omega((x,\xi), (x', \xi'))}{2}-\frac{\|x-x'\|^2}{4}-\frac{\|\xi-\xi'\|^2}{4}
\right)
\]
where $\Omega:\real^{2D}\oplus \real^{2D}\to \real$ is the standard symplectic form on $\real^{2D}=\real^{D}\oplus (\real^{D})^*$ defined  by 
$\Omega((x,\xi), (x', \xi'))=x\cdot \xi' -\xi\cdot x'$.

We may generalize the projection operator $\Pj$ to a slightly more general setting. 
Let $\omega$ be a symplectic form on an even dimensional Euclidean space $E$ and suppose that it is compatible\footnote{Compatibility implies  that the linear map $J:E\to E$ defined by the relation $\omega(x,y)=(x,Jy)$ satisfies $J\circ J=-\Id$.} with the Euclidean norm.
We define the integral transform $
P_\omega:\Sch(E)\to \Sch(E)$ by
\begin{equation}\label{eqn:pjg}
P_\omega u(z)=(2\pi)^{-D/2}\int \exp(-i \cdot \omega(z,z')/2-\|z-z'\|^2/4) \cdot u(z')\, dz', 
\end{equation}
replacing $\Omega$ in the expression of $\Pj$  above by $\omega$. Then we have
\begin{proposition}
$P_\omega$ extends to an orthogonal projection operator in $L^2(E)$.
\end{proposition}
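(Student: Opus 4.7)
The plan is to reduce the claim to the previous proposition about $\Pj$ by finding a linear isometry that intertwines $\omega$ with the standard symplectic form $\Omega$, and then conjugating $\Pj$ by the unitary induced by this isometry.

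First, I would use the compatibility hypothesis to construct a simultaneous normal form. The linear map $J:E\to E$ determined by $\omega(x,y)=(x,Jy)$ is skew-adjoint (because $\omega$ is skew-symmetric) and satisfies $J^2=-\Id$ by compatibility. Applying the spectral theorem to $J$ over $\real$ yields an orthogonal decomposition of $E$ into $J$-invariant $2$-planes, on each of which $J$ acts as a rotation by $\pi/2$. Choosing orthonormal bases on these $2$-planes adapted to $J$ produces an orthonormal symplectic basis of $E$. Equivalently, one obtains a linear isometry $T:\real^{2D}\to E$ with $T^*\omega=\Omega$, i.e.\ $\omega(Tw,Tw')=\Omega(w,w')$ for all $w,w'\in \real^{2D}$.

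Next, I would transport $P_\omega$ through $T$. Define $U:L^2(E)\to L^2(\real^{2D})$ by $(Uv)(w)=v(Tw)$; since $T$ is an isometry (hence volume-preserving), $U$ is unitary with inverse $(U^{-1}u)(z)=u(T^{-1}z)$. Using the change of variable $z'=Tw'$ in the defining integral of $P_\omega$ together with the identities $\|Tw-Tw'\|=\|w-w'\|$ and $\omega(Tw,Tw')=\Omega(w,w')$, a direct computation yields
\[
(U\circ P_\omega\circ U^{-1}) u(w)=(2\pi)^{-D/2}\int \exp\!\left(-\tfrac{i}{2}\Omega(w,w')-\tfrac{1}{4}\|w-w'\|^2\right) u(w')\,dw'=\Pj u(w)
\]
on $\Sch(\real^{2D})$, i.e.\ $U\circ P_\omega\circ U^{-1}=\Pj$.

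Finally, since $\Pj$ extends to an orthogonal projection in $L^2(\real^{2D})$ by the previous proposition, its unitary conjugate $P_\omega=U^{-1}\circ \Pj\circ U$ extends to an orthogonal projection in $L^2(E)$, as desired. The only non-routine ingredient is the existence of the orthonormal symplectic basis, but this is an immediate consequence of the spectral theorem applied to the skew-adjoint orthogonal operator $J$; the rest reduces to tracking the change of variables in a Gaussian kernel.
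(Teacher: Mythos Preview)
Your argument is correct. The paper's proof offers two routes: a direct computation verifying $P_\omega^*=P_\omega$ and $P_\omega\circ P_\omega=P_\omega$, and parenthetically the observation that one can introduce an orthogonal coordinate system on $E$ in which $\omega=\Omega$; you have carried out the second of these in full detail, so your approach coincides with the paper's alternative route.
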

\begin{proof} We can check $P_\omega\circ P_\omega=P_\omega$ and $P_\omega^*=P_\omega$ by a straightforward computation. (Or, one can introduce an orthogonal coordinate system on $E$ in which $\Omega=\omega$.)
\end{proof}

\subsection{The action of affine transformations}\label{ss:affine}
We are going to consider the action of affine transformations viewed through the FBI transform. 
Let $B:\real^{D}\to \real^{D}$ be an affine transformation. 
We write its natural action on $T^*\real^D=\real^D\oplus \real^D$ as 
\[
\widetilde{B}:\real^{D}\oplus \real^{D}\to \real^{D}\oplus \real^{D},\quad 
\widetilde{B}(x,\xi)=(Bx, {}^t(DB)^{-1}\xi),
\]
where $DB$ denotes the linear part of $B$. 
We consider the pull-back operator by $B$,  
\[
\L_B:\mathcal{S}(\real^{D})\to \mathcal{S}(\real^{D}),\qquad \L_B u(x)= u(B(x)).
\]
The corresponding action of $B$ on the functions on $T^*\real^D=\real^D\oplus \real^D$ is\footnote{The coefficient $e^{-i B^{-1}(0)\cdot \xi/2}$ appears as the result of our definition of $\phi_{x,\xi}$. See Remark \ref{remark:half}. }
\[
\widetilde{\L}_B:\mathcal{S}(\real^{D}\oplus \real^{D})\to \mathcal{S}(\real^{D}\oplus \real^{D}),\quad 
\widetilde{\L}_B u(x,\xi)= e^{-i B^{-1}(0)\cdot \xi/2}\cdot u(\widetilde{B}(x,\xi)).
\]
\begin{lemma}\label{FBIiso}
If $B:\real^{D}\to \real^{D}$ is an isometry, we have 
\[
\widetilde{\L}_B\circ \FBI=\FBI\circ \L_B, \qquad \L_B\circ \FBI^*=\FBI^*\circ \widetilde{\L}_B\qquad \Pj\circ \widetilde\L_B=\widetilde{\L}_B\circ \Pj.
\]
\end{lemma}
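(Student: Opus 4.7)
The plan is to prove the first identity by direct computation, deduce the second by taking (formal) adjoints, and then get the third identity formally from $\Pj = \FBI \circ \FBI^*$.

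For the first identity, I would fix $u \in \Sch(\real^D)$ and compute $\FBI(\L_B u)(x,\xi) = \int \overline{\phi_{x,\xi}(y)}\, u(By)\, dy$. Writing $B(y) = (DB)\, y + b$ with $b = B(0)$ and $DB$ orthogonal, I substitute $y' = By$, which gives $dy' = dy$ (since $|\det DB| = 1$) and transforms the integral to
\[
\FBI(\L_B u)(x,\xi) = \int \overline{\phi_{x,\xi}(B^{-1}y')}\, u(y')\, dy'.
\]
The bulk of the work is then to check that
\[
\phi_{x,\xi}(B^{-1}y') = \exp\bigl(-i\,({}^t(DB)^{-1}\xi)\cdot b/2\bigr)\cdot \phi_{\widetilde B(x,\xi)}(y').
\]
This will use the isometry property in two places: $\|B^{-1}y' - x\|^2 = \|y' - Bx\|^2$ for the Gaussian factor, and $\xi \cdot (DB)^{-1} y' = ({}^t(DB)^{-1}\xi)\cdot y'$ for the oscillatory factor. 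The resulting constant phase is the residual piece after one matches the ``$y' - Bx/2$'' term expected in $\phi_{Bx,{}^t(DB)^{-1}\xi}$. Using $-B^{-1}(0) = (DB)^{-1} b$ and the transpose identity, one sees $-B^{-1}(0)\cdot \xi = ({}^t(DB)^{-1}\xi)\cdot b$, so taking complex conjugate in the kernel yields
\[
\FBI(\L_B u)(x,\xi) = e^{-i B^{-1}(0)\cdot \xi/2}\, \FBI u(\widetilde B(x,\xi)) = \widetilde{\L}_B(\FBI u)(x,\xi),
\]
which is exactly the first identity. This is where I expect the main obstacle: careful bookkeeping of the phase factor generated by the affine translation $b$, including the factors of $1/2$ coming from our convention (Remark \ref{remark:half}) that $\phi_{x,\xi}$ carries $i\xi(y-x/2)$ rather than $i\xi(y-x)$. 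Getting these to line up is the whole point of the $e^{-iB^{-1}(0)\cdot\xi/2}$ prefactor in the definition of $\widetilde{\L}_B$.

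Once the first identity is established, the second follows by formal duality: for $v \in \Sch(\real^{2D})$ and $u \in \Sch(\real^D)$, pairing $\langle \L_B \FBI^* v, u\rangle = \langle \FBI^* v, \L_B^* u\rangle$ and using that $\L_B^*$ is (up to the trivial Jacobian) $\L_{B^{-1}}$ on test functions, combined with $\FBI \circ \L_{B^{-1}} = \widetilde{\L}_{B^{-1}} \circ \FBI$ and the fact that $\widetilde{\L}_B$ and $\widetilde{\L}_{B^{-1}}$ are formal adjoints, yields $\L_B \circ \FBI^* = \FBI^* \circ \widetilde{\L}_B$. Alternatively one can just repeat the direct computation for $\FBI^*$, which is structurally identical.

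Finally, the third identity is automatic: using $\Pj = \FBI \circ \FBI^*$ and the first two relations,
\[
\Pj \circ \widetilde{\L}_B = \FBI \circ \FBI^* \circ \widetilde{\L}_B = \FBI \circ \L_B \circ \FBI^* = \widetilde{\L}_B \circ \FBI \circ \FBI^* = \widetilde{\L}_B \circ \Pj,
\]
which concludes the proof.
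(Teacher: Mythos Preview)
Your proposal is correct and follows essentially the same route as the paper: the paper's one-line proof cites the kernel identity $\L_B\,\phi_{\widetilde B(x,\xi)}=\phi_{x,\xi}$ (up to the phase $e^{-iB^{-1}(0)\cdot\xi/2}$, which the paper suppresses) and derives the third relation from $\Pj=\FBI\circ\FBI^*$, which is exactly what your computation unpacks. Your careful bookkeeping of the affine phase factor in fact makes explicit a point the paper leaves implicit.
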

\begin{proof}The first and second equality follows from the relation $\L_B \phi_{\widetilde{B}(x,\xi)}=\phi_{x,\xi}$. The last follows from these and the definition $\Pj=\FBI\circ \FBI^*$. 
\end{proof}

If the affine transformation $B:\real^{D}\to \real^{D}$ is not an isometry, the lemma above is no longer true and we need some modification. Below we assume that $B$ is a linear transformation, {\it i.e. }$B(0)=0$ and that $\det B=1$, for simplicity. 

If we define the operator $
\widehat{\L}_B: \mathcal{S}(\real^{2D})\to \mathcal{S}(\real^{2D})$ by
\begin{equation}\label{eqn:liftB}
\widehat{\L}_B=\FBI \circ \L_B\circ \FBI^*, 
\end{equation}
it makes the following diagram commutes:
\begin{equation}\label{eqn:cdb}
\begin{CD}
\mathcal{S}(\real^{D}\oplus \real^{D})@>{\widehat{\L}_B}>> \mathcal{S}(\real^{D}\oplus \real^{D})\\
@A{\FBI}AA @A{\FBI}AA\\
\mathcal{S}(\real^{D})@>{\L_B}>> \mathcal{S}(\real^{D})\\
\end{CD}
\end{equation}
So it should be natural to call $\widehat{\L}_B$ the lift of $\L_B$ with respect to the FBI transform. 
For the linear transformation $B$, we set
\begin{equation*}
d(B)=\det\big((\Id+{}^tB\cdot B)/2\big)^{1/2}.
\end{equation*}
Then we have the following expression for the lift $\widehat{\L}_B$. 
\begin{proposition}\label{FBIaffine}
$\displaystyle 
\widehat{\L}_B=d(B)\cdot \Pj\circ \widetilde{\L}_B\circ \Pj
$.
\end{proposition}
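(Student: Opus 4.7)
The plan is to reduce the claim to the cleaner statement
\[
\FBI^*\circ\widetilde{\L}_B\circ\FBI = d(B)^{-1}\cdot\L_B,
\]
from which the proposition follows immediately: pre-composing with $\FBI$, post-composing with $\FBI^*$, and using $\Pj=\FBI\circ\FBI^*$ together with the definition (\ref{eqn:liftB}) of $\widehat{\L}_B$ yields
\[
\widehat{\L}_B = \FBI\circ\L_B\circ\FBI^* = d(B)\cdot\FBI\circ\FBI^*\circ\widetilde{\L}_B\circ\FBI\circ\FBI^* = d(B)\cdot\Pj\circ\widetilde{\L}_B\circ\Pj.
\]
So the entire content lies in the reduced identity above.

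To verify it I would compute the Schwartz kernel of $\FBI^*\circ\widetilde{\L}_B\circ\FBI$ directly. Writing the operator out as a triple integral, using $B(0)=0$ to drop the phase factor in $\widetilde{\L}_B$, and substituting $(x,\xi)\mapsto(B^{-1}x,\,{}^tB\,\xi)$ in the variables of integration (which has unit Jacobian since $\det B=1$), the kernel takes the form
\[
K(y,y') = \iint \phi_{B^{-1}x,\,{}^tB\,\xi}(y)\,\overline{\phi_{x,\xi}(y')}\,dx\,d\xi.
\]
The $\xi$-dependent part of the phase collapses to $i\xi\cdot(By-y')$, so integrating in $\xi$ first produces a factor $(2\pi)^{D}\delta(By-y')$ and reduces $K(y,y')$ to a Gaussian integral in $x$ against the quadratic form $\|y-B^{-1}x\|^{2}+\|y'-x\|^{2}$. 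Completing the square in $x$ yields a normalization involving $\det M$, with $M:={}^tB^{-1}B^{-1}+\Id$, together with a residual Gaussian factor $\exp\bigl(-\tfrac{1}{2}(\|y\|^{2}+\|y'\|^{2}-{}^tw\,M^{-1}w)\bigr)$, where $w:={}^tB^{-1}y+y'$.

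The main obstacle is to show that, on the support $y'=By$ of the delta, this residual Gaussian collapses to $1$ and that the overall constant equals exactly $d(B)^{-1}$. The key algebraic identity is
\[
{}^tB^{-1}+B = {}^tB^{-1}\cdot(\Id+{}^tB\cdot B),
\]
which gives the factorization $M = A\,(\Id+{}^tB\cdot B)^{-1}\,{}^tA$ with $A:={}^tB^{-1}+B$; since $w=Ay$ when $y'=By$, this forces ${}^tw\,M^{-1}w = {}^ty\,(\Id+{}^tB\cdot B)\,y = \|y\|^{2}+\|By\|^{2}$, killing the residual exponent. The same factorization gives $\det A = \det(\Id+{}^tB\cdot B)$ (using $\det B=1$) and hence $\det M = \det(\Id+{}^tB\cdot B)$; collecting the $a_D^{2}$ normalization from the two wave packets together with the $(2\pi)$-factors from the $\xi$- and $x$-integrations produces exactly $2^{D/2}/\sqrt{\det(\Id+{}^tB\cdot B)} = 1/d(B)$. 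Thus $K(y,y')=d(B)^{-1}\delta(By-y')$, and integrating against $u(y')$ gives $d(B)^{-1}u(By)=d(B)^{-1}\L_{B}u(y)$, completing the reduction.
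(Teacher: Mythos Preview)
Your proposal is correct and follows essentially the same approach as the paper: reduce to the identity $\FBI^*\circ\widetilde{\L}_B\circ\FBI=d(B)^{-1}\L_B$, integrate out $\xi$ to produce a delta function, then evaluate the remaining Gaussian integral in $x$. The only difference is in the last step: the paper observes that on the support of the delta one may simply shift $z=x-B^{-1}y'$, which turns the Gaussian integral into $\int e^{-|z|^2/2-|Bz|^2/2}\,dz$ and makes your factorization identity and the collapse of the residual exponent unnecessary.
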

\begin{proof}
The operator $\FBI^*\circ \widetilde{\L}_B\circ  \FBI$ can be written as an integral operator
\[
(\FBI^*\circ \widetilde{\L}_B\circ  \FBI) u(y)=\int K(y,y') \, u(y')\, dy'
\]
with the kernel
\[
K(y,y')=a_D^2\cdot \int e^{i \xi \cdot (y-B^{-1} y')-|y-x|^2/2-|y'-B x|^2/2 } dx d\xi. 
\]
If we calculate the integration using a change of variable $z=x-B^{-1}y'$, we obtain
\begin{align*}
&K(y,y')=\pi^{-D/2} \cdot \delta(y-B^{-1} y')\cdot  
\int e^{-|B^{-1} y'-x|^2/2-|y'-B x|^2/2 } dx\\
&=\pi^{-D/2} \cdot \delta(y-B^{-1} y') 
\int e^{-|z|^2/2-|B z|^2/2 } dz= d(B)^{-1}\cdot  \delta( y'-By).
\end{align*}
This implies  $\L_B = d(B)\cdot\FBI^*\circ \widetilde{\L}_B\circ  \FBI$. Composing $\FBI$ and $\FBI^*$ from the left and right respectively, we obtain the required formula. 
\end{proof}

\subsection{Change of variables}\label{ss:sep}
In this subsection, we set $D=2d$ and consider a linear transformation $B:\real^{2d}\to\real^{2d}$ that preserves the symplectic form~$\omega_\dag$ on~$\real^{2d}$ defined in (\ref{eqn:aomega}). Below we show that the operator $\widehat{\L}_B$ can be identified with the tensor product of two (almost) identical operators through an appropriate change of variables. 
This argument is essentially due to F.~Faure\cite{Faure07}.

To begin with, note that the Hilbert space $L^2(\real^{2d}\oplus \real^{2d})$ is naturally identified with the
tensor product $L^2(\real^{2d})\otimes L^2(\real^{2d})$. Let us consider the linear bijection 
\begin{equation}\label{eqn:Z}
Z:\real^{2d}\oplus \real^{2d}\to \real^{2d}\oplus \real^{2d}, 
\quad Z(x,\xi)=(2^{-1/2}(\xi+Jx), 2^{-1/2}(\xi-Jx))
\end{equation}
as a coordinate change, where  $J:\real^{2d}\to \real^{2d}$ is the linear map characterized by the condition $(y, Jx)=\omega_\dag(y,x)$ or, more concretely, defined by
\[
J(x^+,x^-)=(x^-, -x^+)\quad \mbox{ for }x^+, x^-\in \real^{d}.
\]
From the assumption that $B$ preserves $\omega_\dag$, 
the diagram
\[
\begin{CD}
 \real^{2d}\oplus \real^{2d}@>{\widetilde{B}}>>  \real^{2d}\oplus \real^{2d}\\
 @V{Z}VV @V{Z}VV\\
 \real^{2d}\oplus \real^{2d} @>{{}^tB^{-1}\oplus{}^tB^{-1}}>>  \real^{2d}\oplus \real^{2d}
\end{CD}
\]
commutes.
Consequently, for the unitary operators 
\[
Z^*:L^2(\real^{2d}\oplus \real^{2d})\to L^2(\real^{2d}\oplus \real^{2d}),\qquad 
Z^*u(x,\xi)=u(Z(x,\xi)),
\]
and 
\begin{equation*}
\L_0:L^2(\real^{2d})\to L^2(\real^{2d}),\qquad 
\L_0u(\xi)=u({}^tB^{-1}\xi),
\end{equation*}
the following  diagram commutes:
\[
\begin{CD}
 L^2(\real^{2d}\oplus \real^{2d})@>{\widetilde{\L}_B}>>  L^2(\real^{2d}\oplus \real^{2d})\\
 @A{Z^*}AA @A{Z^*}AA\\
 L^2(\real^{2d}\oplus \real^{2d})@>{\L_0\otimes \L_0}>>  L^2(\real^{2d}\oplus \real^{2d})
\end{CD}
\]

Another  important property of $Z$ is that it is an isometry with respect to the standard Euclidean norm on $\real^{2d}\oplus \real^{2d}$ and intertwines the standard symplectic form $\Omega$ with $\omega_\dag\oplus (-\omega_\dag)$. That is, for $
Z(x,\xi)=(z,w)$, $Z(x',\xi')=(z',w')$, we have
\[
((x, \xi), (x', \xi'))_{\real^{2d}\oplus \real^{2d}}=(z,z')_{\real^{2d}}+(w,w')_{\real^{2d}}
\]
and
\[
\Omega((x, \xi), (x', \xi'))=\omega_0(z,z')-\omega_0(w,w').
\]

Let $\Pj_{0}:L^2(\real^{2d})\to L^2(\real^{2d})$ be the projection operator  defined by (\ref{eqn:pjg}) with the setting $D=d$ and $\omega=\omega_\dag$. Then, from the property of $Z$ noted in the last paragraph,  we see that the following diagram also commutes:
\[
\begin{CD}
L^2(\real^{2d}\oplus \real^{2d})@>{\Pj}>> L^2(\real^{2d}\oplus \real^{2d})\\
@A{Z^*}AA @A{Z^*}AA\\
L^2(\real^{2d}\oplus \real^{2d}) @>{\Pj_0\otimes \overline{\Pj_0}}>>L^2(\real^{2d}\oplus \real^{2d})
\end{CD}
\]
Therefore, if we define the operator $\widehat{\L}_0:L^2(\real^{2d})\to L^2(\real^{2d})$ by
\begin{equation}\label{eqn:whl}
\widehat{\L}_0=d(B)^{1/2}\cdot  \Pj_0\circ \L_0\circ \Pj_0, 
\end{equation}
we have the commutative diagram 
\begin{equation}\label{CD:lb}
\begin{CD}
L^2(\real^{2d}\oplus \real^{2d})@>{\widehat{\L}_B}>> L^2(\real^{2d}\oplus \real^{2d})\\
@A{Z^*}AA @A{Z^*}AA\\
L^2(\real^{2d}\oplus \real^{2d})  @>{\widehat{\L}_0\otimes \overline{\widehat{\L}_0}}>>L^2(\real^{2d}\oplus \real^{2d})
\end{CD}
\end{equation}
In conclusion,  the  operator $\widehat{\L}_B$ is identified with the tensor product of the operator $\widehat{\L}_0$ and its complex conjugate through the coordinate change by $Z$.

\section{Definition of anisotropic Sobolev spaces}\label{s:pFBI}
In this section, we introduce what we call the partial FBI transform  and then give the definition of  anisotropic Sobolev spaces using it. 
\subsection{The partial FBI transform} \label{ss:pFBI}
The partial FBI transform on $\real^{2d+1}$ is  a combination of the Fourier transform in the first coordinate and the FBI transform in the other coordinates with some scaling. 
Below we give  a precise definition for it. 

Take and fix a $C^\infty$ function $\chi:\real\to [0,1]$ such that
\[
\chi(s)=\begin{cases}
1, &\mbox{if $s\le 4/3$};\\
0, &\mbox{if $s\ge 5/3$}.
\end{cases}
\]
For a real number $s$, we set\footnote{In most of the literature, $\langle s\rangle$ is defined to be $(1+s^2)^{1/2}$. But the definition here is more convenient for our argument. } 
\[
\langle s\rangle=|s|\cdot (1- \chi(|s|))+\chi(|s|),
\]
so that $\langle s\rangle\ge 1$ for any $s$ and that $\langle s\rangle=|s|$ holds if $|s|\ge 2$. For a given point $y=(y_i)_{i=0}^{2d} \in \real^{2d+1}$, we will write
\[
y^+=(y_1,y_2,\dots, y_d), \quad y^-=(y_{d+1}, y_{n+2}, \dots, y_{2d}) \quad \mbox{and}\quad y_\dag=(y^+,y^-).
\]
As in the definition of the FBI transform, we introduce a family of functions 
\[
\Phi_{x_\dag,\xi}:\real^{2d+1}\to \complex\quad \mbox{ for $(x_\dag,\xi)\in \real^{2d}\oplus \real^{2d+1}$}
\]
 defined by 
\begin{align*}
\Phi_{x_\dag,\xi}(y)&=\frac{\langle \xi_0\rangle^{d/2}\cdot a_{2d}}{(2\pi)^{1/2}}\cdot
\exp\left(i\xi_0 y_0+i\xi_\dag(y_\dag-(x_\dag/2))-\langle \xi_0\rangle \|y_\dag-x_\dag\|^2/2\right)\\
&=\left(\frac{1}{(2\pi)^{1/2}}\cdot
e^{i\xi_0 y_0}\right)
\cdot \left(
\langle \xi_0\rangle^{d/2}\cdot a_{2d} \cdot
e^{i\xi_\dag(y_\dag-(x_\dag/2))-\langle \xi_0\rangle \|y_\dag-x_\dag\|^2/2}\right).
\end{align*}
The partial FBI transform $\pFBI$ maps a function $u(y)$ on  $\real^{2d+1}$ to the function 
\[
\pFBI  u(x_\dag,\xi)=\int \overline{\Phi_{x_\dag,\xi}(y)}\cdot  u(y)\, dy\qquad \mbox{on}\quad\real^{2d}\oplus \real^{2d+1}.
\]
And its formal adjoint $\pFBI^*$ maps a function $u(x_\dag,\xi)$ on  $\real^{2d}\oplus \real^{2d+1}$ to the function
\[
\pFBI^*u(y)=\int \Phi_{x_\dag,\xi}(y)\, u(x_\dag,\xi) \; dx_\dag d\xi\qquad \mbox{on}\quad  \real^{2d+1}.
\]

The partial FBI transform may be viewed as a combination of the Fourier transform and the FBI transform with some scaling as follows:
Consider the Fourier transform in the first variable, 
\[
\Fourier_0:\Sch(\real^{2d+1})\to \Sch(\real^{2d+1}), \qquad \Fourier_0u(\xi_0, y_\dag)=
(2\pi)^{-1/2}\int e^{-i\xi_0 y_0} u(y_0, y_\dag) dy_0
\]
and the FBI transform in the other coordinates,
\[ 
\FBI_\dag:\Sch(\real^{2d+1})\to \Sch(\real^{2d}\oplus \real^{2d+1}), \qquad \FBI_\dag u(x_\dag, \xi)=  \int \overline{\phi_{x_\dag,\xi_\dag}(z_\dag)}\cdot  u(\xi_0,z_\dag) dz_\dag,
\]
where $\xi=(\xi_0,\xi_\dag)\in \real^{2d+1}=\real\oplus \real^{2d}$. 
Also, consider the  operators 
\[
S:\Sch(\real^{2d+1})\to \Sch(\real^{2d+1}),\qquad Su(\xi_0, x_\dag)= \langle \xi_0\rangle^{-d/2}\cdot  u(\xi_0, \langle \xi_0\rangle^{-1/2} x_\dag)
\]
and
\[
\widehat{S}:\Sch(\real^{2d}\oplus \real^{2d+1})\to \Sch(\real^{2d}\oplus\real^{2d+1}),\quad \widehat{S}u(x_\dag, \xi)=  u\left( \langle \xi_0\rangle^{-1/2} x_\dag, ( \xi_0, \langle \xi_0\rangle^{1/2} \xi_\dag)\right)
\]
associated to the scaling  
\[
\real\oplus\real^{2d}\ni (\xi_0,x_\dag)\mapsto \big(\xi_0, \langle \xi_0\rangle^{-1/2}x_\dag\big )\in\real\oplus\real^{2d}.
\]
Then the FBI transform $\pFBI$ and its formal adjoint $\pFBI^*$ are expressed as
\[
\pFBI=(\widehat{S}^{-1}\circ \FBI_\dag\circ  S)\circ \Fourier_0,\qquad \pFBI^*=\Fourier_0^{-1}\circ (S^{-1}\circ \FBI_\dag^*\circ  \widehat{S}).
\]
From this expression and the properties of the Fourier and FBI transform, we obtain 
\begin{proposition}\label{pp24d}
{\rm (1)}
The partial FBI transform $\pFBI$ is a continuous linear operator from $\Sch(\real^{2d+1})$ to $\Sch(\real^{2d}\oplus \real^{2d+1})$, while its formal adjoint $\pFBI^*$ is a continuous linear operator from $\Sch(\real^{2d}\oplus \real^{2d+1})$ to $\Sch(\real^{2d+1})$. \\
{\rm (2)}
The composition $\pFBI^*\circ \pFBI:\Sch(\real^{2d+1})\to \Sch(\real^{2d+1})$ is the identity operator. Consequently  $\pFBI$ extends to an isometric embedding of $L^2(\real^{2d+1})$ into $L^2(\real^{2d}\oplus \real^{2d+1})$.
\end{proposition}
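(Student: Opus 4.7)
The plan is to read off both claims from the factorization
\[
\pFBI = (\widehat{S}^{-1}\circ \FBI_\dag\circ S)\circ \Fourier_0,\qquad \pFBI^* = \Fourier_0^{-1}\circ (S^{-1}\circ \FBI_\dag^*\circ \widehat{S})
\]
already exhibited in the excerpt. The first step is a routine verification of this factorization: substitute the definitions of $\Phi_{x_\dag,\xi}$, the partial Fourier transform, the FBI transform in the $y_\dag$ variables, and the two scaling operators, and match the Gaussian factor $\langle \xi_0\rangle^{d/2}\,a_{2d}\,\exp(i\xi_\dag(y_\dag - x_\dag/2) - \langle \xi_0\rangle\|y_\dag - x_\dag\|^2/2)$ appearing in $\Phi_{x_\dag,\xi}$ with $\phi_{x_\dag,\xi_\dag}$ evaluated in rescaled variables. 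This is the one computation that requires actual work; everything that follows is formal.

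For part (1), each of the four factors in the displayed composition for $\pFBI$ is continuous between the relevant Schwartz spaces. The partial Fourier transform $\Fourier_0$ preserves $\Sch(\real^{2d+1})$ since $\Fourier$ on $\real$ preserves $\Sch(\real)$ continuously and this extends to the joint Schwartz seminorms by a standard tensor-product argument. The scaling operators $S$ and $\widehat{S}^{-1}$ are continuous because $\xi_0\mapsto \langle\xi_0\rangle$ is smooth, bounded below by $1$, and polynomially bounded together with all its derivatives, so Schwartz seminorms transform into Schwartz seminorms. Finally $\FBI_\dag$ is continuous from $\Sch(\real^{2d+1})$ to $\Sch(\real^{2d}\oplus\real^{2d+1})$: apply Proposition \ref{pp24}(1) in the $y_\dag$ variables with $\xi_0$ as a smooth parameter, noting that $\partial_{\xi_0}$ and multiplication by polynomials in $\xi_0$ pass through the FBI transform in the remaining variables. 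Composing these four continuous maps yields continuity of $\pFBI$. The same argument, reversed, gives continuity of $\pFBI^*$.

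For part (2), compose the two factorizations and cancel the middle pair $\widehat{S}\circ\widehat{S}^{-1}$:
\[
\pFBI^*\circ \pFBI
= \Fourier_0^{-1}\circ S^{-1}\circ \FBI_\dag^*\circ \FBI_\dag\circ S\circ \Fourier_0.
\]
By Proposition \ref{pp24}(2) applied fiberwise in $\xi_0$, the middle composition $\FBI_\dag^*\circ \FBI_\dag$ is the identity on $\Sch(\real^{2d+1})$. The remaining factors collapse to $\Fourier_0^{-1}\circ \Fourier_0 = \Id$, proving $\pFBI^*\circ \pFBI = \Id$. The $L^2$-isometry assertion then follows: for $u\in \Sch(\real^{2d+1})$ one has $\|\pFBI u\|_{L^2}^2 = \langle \pFBI^*\pFBI u, u\rangle = \|u\|_{L^2}^2$, and since $\Sch(\real^{2d+1})$ is dense in $L^2(\real^{2d+1})$ the extension to an isometric embedding is automatic.

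The main obstacle, such as it is, lies in checking the factorization in the first paragraph: keeping track of the $\langle\xi_0\rangle^{d/2}$ prefactor and the simultaneous rescalings of $x_\dag$ and $\xi_\dag$ by $\langle\xi_0\rangle^{-1/2}$ and $\langle\xi_0\rangle^{1/2}$ respectively requires care, but involves no conceptual difficulty once one writes the kernels explicitly.
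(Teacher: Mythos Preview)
Your proof is correct and follows exactly the approach the paper itself indicates: the paper states the factorization $\pFBI=(\widehat{S}^{-1}\circ \FBI_\dag\circ S)\circ \Fourier_0$ and its adjoint immediately before the proposition and then simply asserts that the proposition follows ``from this expression and the properties of the Fourier and FBI transform.'' You have supplied the details the paper leaves implicit, and nothing more is needed.
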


\begin{proposition} The composition $\pPj:=\pFBI\circ \pFBI^*:\Sch(\real^{2d}\oplus \real^{2d+1})\to \Sch(\real^{2d}\oplus \real^{2d+1})$ extends to the orthogonal projection 
$\pPj:L^2(\real^{2d}\oplus \real^{2d+1})\to L^2(\real^{2d}\oplus \real^{2d+1})$ to the closed subspace 
$\pFBI(L^2(\real^{2d+1}))=\{ \pFBI u\mid u\in L^2(\real^{2d+1})\}$.
\end{proposition}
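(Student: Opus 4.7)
The plan is to mirror the proof of the corresponding proposition for $\Pj = \FBI \circ \FBI^*$, which went through essentially two ingredients: the identity $\FBI^* \circ \FBI = \Id$ on Schwartz functions (giving idempotency and self-adjointness of the composition $\FBI \circ \FBI^*$), and the characterization of its range. The same strategy applies verbatim once we have the analogue of Proposition \ref{pp24}, namely Proposition \ref{pp24d}, at our disposal.

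First I would show that $\pPj$, initially defined on $\Sch(\real^{2d}\oplus\real^{2d+1})$, extends to a bounded operator on $L^2(\real^{2d}\oplus \real^{2d+1})$. By Proposition \ref{pp24d}(2), $\pFBI$ extends to an isometry (in particular a bounded operator of norm $1$) from $L^2(\real^{2d+1})$ to $L^2(\real^{2d}\oplus \real^{2d+1})$. Its formal adjoint $\pFBI^*$ therefore extends to the Hilbert-space adjoint, a bounded operator in the reverse direction of norm $1$. Composing them, $\pPj$ extends to a bounded operator on $L^2(\real^{2d}\oplus\real^{2d+1})$ of norm at most $1$.

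Next I would verify the projection properties. Self-adjointness is immediate from $\pPj^* = (\pFBI \circ \pFBI^*)^* = \pFBI\circ \pFBI^* = \pPj$. Idempotency follows from Proposition \ref{pp24d}(2):
\[
\pPj\circ \pPj = \pFBI\circ (\pFBI^*\circ \pFBI)\circ \pFBI^* = \pFBI \circ \Id \circ \pFBI^* = \pPj.
\]
Hence $\pPj$ is an orthogonal projection on $L^2(\real^{2d}\oplus\real^{2d+1})$.

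Finally I would identify its range with $\pFBI(L^2(\real^{2d+1}))$. If $u \in L^2(\real^{2d}\oplus\real^{2d+1})$ satisfies $\pPj u = u$, then $u = \pFBI(\pFBI^* u)$ lies in $\pFBI(L^2(\real^{2d+1}))$ because $\pFBI^*u\in L^2(\real^{2d+1})$. Conversely, if $u=\pFBI v$ with $v\in L^2(\real^{2d+1})$, then $\pPj u = \pFBI\circ \pFBI^* \circ \pFBI v = \pFBI v = u$, again by Proposition \ref{pp24d}(2). This shows that the range is exactly the stated subspace and, being the range of a bounded projection, it is automatically closed. None of the steps presents a genuine obstacle; the only mildly delicate point is the bootstrap from Schwartz-space identities to $L^2$, which is immediate once one knows $\pFBI$ is $L^2$-isometric.
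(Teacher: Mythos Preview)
Your proposal is correct and follows exactly the approach the paper intends: the paper states this proposition without proof precisely because it is parallel, word for word, to the proof given earlier for $\Pj=\FBI\circ\FBI^*$, which is exactly what you reproduce. There is nothing to add.
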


\subsection{Anisotropic Sobolev spaces}\label{s:anisoH}
In this subsection, we define the anisotropic Sobolev space $H^r_\aniso$ as the pull-back of some weighted $L^2$ space on $\real^{2d}\oplus \real^{2d+1}$ by the partial FBI transform $\pFBI$.

For $\theta>0$, let $\cone^*_+(\theta)$ and $\cone^*_-(\theta)$ be the cones in $\real^{2d}$ defined by 
\begin{align*}
\cone^*_+(\theta)&=\{(\zeta^+, \zeta^-)\in \real^{2d}=\real^d\oplus \real^d\mid \|\zeta^-\|\le \theta\|\zeta^+\|\},\quad \mbox{and}\\
\cone^*_-(\theta)&=\{(\zeta^+, \zeta^-)\in \real^{2d}=\real^d\oplus \real^d\mid \|\zeta^+\|\le \theta\|\zeta^-\|\}.
\end{align*}
We take $C^\infty$ functions 
$\psi_\sigma:\mathbf{P}\real^{2d}\to [0,1]$, $\sigma=\pm$,  on the projective space $\mathbf{P}\real^{2d}$ such that \begin{align*}
\psi_+([\zeta])+\psi_-([\zeta])=1\quad\mbox{and }\quad
\begin{cases}
\psi_+([\zeta])=1&\quad \mbox{ if $\zeta\in \cone^*_+(1/3)$,}\\
\psi_-([\zeta])=1&\quad \mbox{ if $\zeta\in \cone^*_-(1/3)$,}
\end{cases}
\end{align*}
where $[\zeta]$ denotes the element in $\mathbf{P}\real^{2d}$ that is represented by $\zeta\in \real^{2d}$, and then  introduce the function
\[
W^r_{\aniso}:\real^{2d}\to \real, \quad W^r_{\aniso}(\zeta)=
\psi_+([\zeta])\cdot \langle \|\zeta\|\rangle^{-r}+\psi_-([\zeta])\cdot \langle \|\zeta\|\rangle^{+r}.
\]
We define the weight function $
\weight^{r}_{\mathrm{aniso}}:\real^{2d}\oplus \real^{2d+1}\to \real_+$ as follows: 
In the case $x=0$, we set
\[
\weight^{r}_{\mathrm{aniso}}(0,\xi)
=W^{2r}_{\mathrm{aniso}}\left(
 \frac{\xi_\dag}
 {\langle \|\xi\|\rangle^{1/2}} \right) \qquad\mbox{for $\xi=(\xi_0,\xi_\dag)\in \real^{2d+1}=\real\oplus \real^{2d}$.}
\]
 (Notice that we have $2r$ in the superscript of $W^{2r}_{\mathrm{aniso}}(\cdot)$ on the right hand side.)
 Then we  extend this definition to the case $x\neq 0$ uniquely so that it is invariant with respect to the natural action of the transformation group $\Af$. In other words, we set
\begin{align*}
&\weight^{r}_{\mathrm{aniso}}(x_\dag,\xi)=\weight^{r}_{\mathrm{aniso}}(0,{}^tDA_{(0,x_\dag)}(\xi)).
 \end{align*}
Note that $A_{(0,x_\dag)}$ is defined in (\ref{eqn:ac}) with setting $c=(0,x_\dag)\in \real^{2d+1}$, so that
\[
{}^tDA_{(0,x)}(\xi_0, \xi_\dag)=(\,\xi_0\,,\, \xi^{+}+ \xi_0\cdot  x^{-}\,, \, \xi^{-} -\xi_0\cdot x^+)
=(\,\xi_0, \,\xi_\dag+ \xi_0\cdot  J(x)\,).
\]

Now we define the anisotropic Sobolev space $H^{r}_{\aniso}$ as the completion of the Schwartz space $\Sch(\real^{2d+1})$ with respect to the norm 
\[
\|u\|_{r}=\left\|\weight^{r}_{\mathrm{aniso}}\cdot \pFBI u\right\|_{L^2(\real^{2d}\oplus \real^{2d+1})}.
\]
By definition, the partial FBI transform $\pFBI$ extends to the isometric embedding
\[
\pFBI:H^{r}_{\aniso}\to L^2(\real^{2d}\oplus \real^{2d+1};\weight^{r}_{\mathrm{aniso}})
\]
where $L^2(\real^{2d}\oplus \real^{2d+1};\weight^{r}_{\mathrm{aniso}})$ denotes the weighted $L^2$ space with weight $\weight^{r}_{\mathrm{aniso}}$.   

We give a relation between the anisotropic Sobolev spaces introduced above and the usual Sobolev spaces. 
(The proofs of  Lemma \ref{lm:sob} and Corollary \ref{cor:sob} below will be given in the appendix.) 
Recall that the Sobolev space $H^r$ of order $r$ on $\real^{2d+1}$ is defined as the completion of the Schwartz space $\Sch(\real^{2d+1})$ with respect to the norm
\[
\|u\|_{H^r}=\|\langle\xi\rangle^{r}\cdot \Fourier u(\xi)\|_{L^2(\real^{2d+1})}
\]
where $\Fourier$ denotes the Fourier transform. For  another norm 
\[
\|u\|'_{H^r}=\|\langle\xi\rangle^{r}\cdot \pFBI u(x,\xi)\|_{L^2(\real^{2d}\oplus \real^{2d+1})}
\]
defined by using the partial FBI transform $\pFBI$, one can show 
\begin{lemma}\label{lm:sob}
The two norms $\|\cdot \|_{H^r}$ and $\|\cdot \|'_{H^r}$ on $\Sch(\real^{2d+1})$ are equivalent. 
\end{lemma}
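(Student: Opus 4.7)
The plan is to convert the partial FBI transform to the Fourier side via Parseval and then compare the two weighted $L^2$-norms by an explicit kernel estimate. First I would compute the Fourier transform of the wave packet $\Phi_{x_\dag,\xi}$: separate Gaussian integration in $y_0$ and $y_\dag$ yields
\[
\widehat{\Phi_{x_\dag,\xi}}(\eta_0,\eta_\dag)=C\,\langle \xi_0\rangle^{-d/2}\,\delta(\eta_0-\xi_0)\,e^{i(\xi_\dag/2-\eta_\dag)\,x_\dag}\,e^{-\|\xi_\dag-\eta_\dag\|^2/(2\langle \xi_0\rangle)}
\]
for an explicit constant $C$, so by Parseval
\[
\pFBI u(x_\dag,\xi_0,\xi_\dag)=\overline{C}\,\langle \xi_0\rangle^{-d/2}\,e^{-i\xi_\dag x_\dag/2}\int e^{i\eta_\dag x_\dag}\,e^{-\|\xi_\dag-\eta_\dag\|^2/(2\langle \xi_0\rangle)}\,\Fourier u(\xi_0,\eta_\dag)\,d\eta_\dag,
\]
i.e.\ $\pFBI u$ is (up to a phase) the inverse Fourier transform in $\eta_\dag$ of $\Fourier u(\xi_0,\eta_\dag)$ multiplied by a Gaussian of width $\langle\xi_0\rangle^{1/2}$ centered at $\eta_\dag=\xi_\dag$.

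Next I would integrate $x_\dag$ first by Plancherel, then multiply by $\langle\|\xi\|\rangle^{2r}$, integrate in $\xi$, exchange the order of integration, and substitute $\xi_\dag=\eta_\dag+\langle\xi_0\rangle^{1/2}z$ (whose Jacobian $\langle\xi_0\rangle^{d}$ exactly cancels the $\langle\xi_0\rangle^{-d}$ from Plancherel). This reduces $\|u\|_{H^r}^{\prime\,2}$ to
\[
C'\int|\Fourier u(\xi_0,\eta_\dag)|^2\,K(\xi_0,\eta_\dag)\,d\xi_0\,d\eta_\dag,\quad K(\xi_0,\eta_\dag):=\int e^{-\|z\|^2}\,\langle\|(\xi_0,\eta_\dag+\langle\xi_0\rangle^{1/2}z)\|\rangle^{2r}\,dz.
\]
Since the ordinary Plancherel identity gives $\|u\|_{H^r}^2=\int|\Fourier u(\xi_0,\eta_\dag)|^2\langle\|(\xi_0,\eta_\dag)\|\rangle^{2r}\,d\xi_0\,d\eta_\dag$, equivalence of the two norms would follow from the pointwise two-sided bound $K(\xi_0,\eta_\dag)\asymp\langle\|(\xi_0,\eta_\dag)\|\rangle^{2r}$ uniformly in $(\xi_0,\eta_\dag)\in\real^{2d+1}$.

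The hard part will be verifying this weight equivalence; everything else is an explicit Gaussian computation. Using $\langle\xi_0\rangle^{1/2}\le\langle\|(\xi_0,\eta_\dag)\|\rangle$ together with the triangle inequality gives
\[
\langle\|(\xi_0,\eta_\dag+\langle\xi_0\rangle^{1/2}z)\|\rangle\le C_r\,\langle\|(\xi_0,\eta_\dag)\|\rangle\,(1+\|z\|),
\]
which combined with the Gaussian weight controls the upper bound of $K$ when $r\ge 0$ and the lower bound when $r\le 0$. For the opposite inequalities I would split into cases: if $\langle\xi_0\rangle\ge c\,\langle\|(\xi_0,\eta_\dag)\|\rangle^2$, then $\langle\|(\xi_0,\eta_\dag+\langle\xi_0\rangle^{1/2}z)\|\rangle\ge\langle\xi_0\rangle\gtrsim\langle\|(\xi_0,\eta_\dag)\|\rangle$ trivially; otherwise $\|\eta_\dag\|\gg\langle\xi_0\rangle^{1/2}$, so for $\|z\|$ in a fixed bounded set the shift $\langle\xi_0\rangle^{1/2}z$ is small compared with $\|\eta_\dag\|$, giving $\|\eta_\dag+\langle\xi_0\rangle^{1/2}z\|\ge\|\eta_\dag\|/2$ on a set of positive Gaussian mass. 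These complementary estimates give the required equivalence, and hence the lemma, for any $r\in\real$.
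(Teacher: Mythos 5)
Your strategy — compute the Fourier transform of the wave packet $\Phi_{x_\dag,\xi}$, apply Plancherel in $x_\dag$ to obtain an exact diagonal identity
\[
(\|u\|'_{H^r})^2=C'\int|\Fourier u(\xi_0,\eta_\dag)|^2\,K(\xi_0,\eta_\dag)\,d\xi_0\,d\eta_\dag,
\qquad K(\xi_0,\eta_\dag)=\int e^{-\|z\|^2}\langle\|(\xi_0,\eta_\dag+\langle\xi_0\rangle^{1/2}z)\|\rangle^{2r}dz,
\]
and then prove a two-sided pointwise equivalence $K\asymp\langle\|(\xi_0,\eta_\dag)\|\rangle^{2r}$ — rests on the same Gaussian computation as the paper's appendix, but is organized more efficiently. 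The paper computes the Schwartz kernel of $\pFBI\circ\Fourier^{-1}$, writes $(\|u\|'_{H^r})^2$ as a quadruple integral, integrates to produce a $\delta(\eta-\eta')$ and the same weight kernel, bounds it from above, and then repeats the whole argument with $\Fourier\circ\pFBI^*$ to get the reverse inequality. Your exact-identity form makes the second pass unnecessary: a two-sided bound on the single weight kernel $K$ gives both directions of Lemma \ref{lm:sob} at once. That is a genuine simplification.

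However, the case split you use for the nontrivial direction of the weight equivalence has a gap. You claim that if $\langle\xi_0\rangle<c\,\langle\|(\xi_0,\eta_\dag)\|\rangle^2$ then $\|\eta_\dag\|\gg\langle\xi_0\rangle^{1/2}$; this is false. Take $\eta_\dag=0$ and $|\xi_0|$ large: then $\langle\|(\xi_0,\eta_\dag)\|\rangle^2\approx|\xi_0|^2\gg|\xi_0|\approx\langle\xi_0\rangle$, so you are in your second case, but $\|\eta_\dag\|=0$ and the Gaussian-mass argument gives nothing. (The claim $K(\xi_0,0)\gtrsim\langle\xi_0\rangle^{2r}$ is still true, but only because the \emph{first}-case reasoning applies: $\langle\|(\xi_0,\langle\xi_0\rangle^{1/2}z)\|\rangle\ge\langle\xi_0\rangle=\langle\|(\xi_0,0)\|\rangle$ trivially.) The correct dichotomy is whether $|\xi_0|$ or $\|\eta_\dag\|$ dominates $\|(\xi_0,\eta_\dag)\|$. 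When $|\xi_0|\gtrsim\|\eta_\dag\|$, the trivial bound $\langle\|(\xi_0,\eta_\dag+\langle\xi_0\rangle^{1/2}z)\|\rangle\ge\langle\xi_0\rangle\gtrsim\langle\|(\xi_0,\eta_\dag)\|\rangle$ works for every $z$. When $\|\eta_\dag\|\gg|\xi_0|$ and $\|\eta_\dag\|$ is large (the bounded region being trivial), $\langle\xi_0\rangle^{1/2}\le\|\eta_\dag\|^{1/2}\ll\|\eta_\dag\|$, so the shift $\langle\xi_0\rangle^{1/2}z$ is small on $\{\|z\|\le1\}$ and your Gaussian-mass argument applies. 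Your first case, as written, only covers a bounded region of $(\xi_0,\eta_\dag)$ and leaves the regime ``$\eta_\dag$ small, $\xi_0$ large'' uncovered; with the corrected dichotomy the rest of your argument goes through.
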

For a subset $K\subset \real^{2d+1}$, let 
$C^\infty(K)$ be the set of $C^\infty$ functions whose supports are  contained in $K$, and let $H^r_\aniso(K)$ (resp. $H^r(K)$) be the closure of  $C^\infty(K)$  in $H_\aniso^r$ (resp. $H^r$).
As a consequence of the last lemma, one obtains
\begin{corollary}\label{cor:sob}
For any compact subset $K\subset \real^{2d+1}$, we have 
\[
H^{r}(K)\subset H_{\aniso}^{r}(K)\subset H^{-r}(K).
\]
\end{corollary}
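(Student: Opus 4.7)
By Lemma~\ref{lm:sob} the norms $\|\cdot\|_{H^{\pm r}}$ are equivalent on $\Sch(\real^{2d+1})$ to the partial-FBI expressions $\|\langle\xi\rangle^{\pm r}\pFBI u\|_{L^2}$, so both inclusions reduce to showing, for every $u\in C^\infty(K)$,
\[
\|\weight^r_\aniso\pFBI u\|_{L^2}\leq C_K\|\langle\xi\rangle^r\pFBI u\|_{L^2}\quad\text{and}\quad\|\langle\xi\rangle^{-r}\pFBI u\|_{L^2}\leq C_K\|\weight^r_\aniso\pFBI u\|_{L^2}.
\]
My plan is to prove the first estimate by a two-region decomposition in~$x_\dag$ and to deduce the second by duality.

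The main analytic input is a pointwise comparison of weights on bounded regions of $x_\dag$. From the $\Af$-invariance of $\weight^r_\aniso$ and its definition at $x_\dag=0$, one has
\[
\weight^r_\aniso(x_\dag,\xi)=W^{2r}_\aniso\!\left(\frac{\xi_\dag+\xi_0\,J(x_\dag)}{\langle\|(\xi_0,\xi_\dag+\xi_0 J(x_\dag))\|\rangle^{1/2}}\right).
\]
For $\|x_\dag\|\leq M$ the argument of $W^{2r}_\aniso$ has Euclidean norm $\leq C_M\langle\xi\rangle^{1/2}$, and since $\langle\|\eta\|\rangle^{-2r}\leq W^{2r}_\aniso(\eta)\leq\langle\|\eta\|\rangle^{2r}$, we obtain
\[
C_M^{-1}\langle\xi\rangle^{-r}\leq\weight^r_\aniso(x_\dag,\xi)\leq C_M\langle\xi\rangle^r\qquad\text{on }\{\|x_\dag\|\leq M\}.
\]

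For the first estimate, pick $R$ with $K\subset\{\|y_\dag\|\leq R\}$ and split $\|\weight^r_\aniso\pFBI u\|_{L^2}^2$ into contributions from $\|x_\dag\|\leq 2R$ and $\|x_\dag\|>2R$. The pointwise comparison handles the near part directly. For the far part, $y\in K$ implies $\|y_\dag-x_\dag\|\geq\|x_\dag\|/2$, so the Gaussian factor in $\Phi_{x_\dag,\xi}$ yields $|\pFBI u(x_\dag,\xi)|\lesssim\langle\xi_0\rangle^{d/2}e^{-\langle\xi_0\rangle\|x_\dag\|^2/8}\|u\|_{L^1}$; $N$-fold integration by parts in $y_\dag$ produces further $(1+\|\xi_\dag\|)^{-N}$-decay at the cost of $y_\dag$-derivatives of $u$, which are absorbable into $\|u\|_{H^r}$ via Sobolev embedding for $r$ sufficiently large. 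Combined with the crude pointwise bound $\weight^r_\aniso\lesssim(1+\|x_\dag\|)^r\langle\xi\rangle^r$, the Gaussian easily dominates the polynomial growth and the far-region contribution is $\leq C_K\|u\|_{H^r}^2$, completing the first estimate.

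For the second estimate I would argue by duality. Given $v\in\Sch(\real^{2d+1})$ with $\|v\|_{H^r}\leq 1$, pick $\tilde\rho\in C_0^\infty$ with $\tilde\rho\equiv 1$ on $K$; then $\langle u,v\rangle=\langle u,\tilde\rho v\rangle$, and Cauchy--Schwarz applied to the Plancherel identity $\langle u,\tilde\rho v\rangle=\int\pFBI u\cdot\overline{\pFBI(\tilde\rho v)}$ gives $|\langle u,v\rangle|\leq\|u\|_r\cdot\|(\weight^r_\aniso)^{-1}\pFBI(\tilde\rho v)\|_{L^2}$. The same two-region analysis, applied now to the compactly supported $\tilde\rho v$ with the reciprocal weight $(\weight^r_\aniso)^{-1}$ (which satisfies the analogous pointwise bound $(\weight^r_\aniso)^{-1}\leq C_M\langle\xi\rangle^r$ on $\{\|x_\dag\|\leq M\}$), yields $\|(\weight^r_\aniso)^{-1}\pFBI(\tilde\rho v)\|_{L^2}\leq C_K\|\tilde\rho v\|_{H^r}\leq C_K'\|v\|_{H^r}$, where the last step uses the boundedness of multiplication by $\tilde\rho$ on $H^r$. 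Taking the supremum over $v$ gives $\|u\|_{H^{-r}}\leq C_K''\|u\|_r$. The principal technical obstacle is controlling the $\xi_\dag$-decay of $\pFBI u$ on the unbounded region, since the Gaussian factor in $\Phi_{x_\dag,\xi}$ provides decay only in $\|x_\dag\|$ (with the $\langle\xi_0\rangle$-scaling); the bookkeeping of the integration-by-parts arguments, the interaction of derivatives of the Gaussian with those of $u$, and the polynomial growth of $\weight^r_\aniso$ in $\|x_\dag\|$ is the main subtlety, but the Gaussian comfortably absorbs all polynomial factors.
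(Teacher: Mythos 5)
Your overall strategy matches the paper's own: invoke Lemma~\ref{lm:sob} to pass to the partial-FBI expression of the Sobolev norm, compare $\weight^r_\aniso$ pointwise with $\langle\|\xi\|\rangle^{\pm r}$ on regions where $x_\dag$ is bounded, and control the unbounded-$x_\dag$ contribution using rapid decay of $\pFBI u$. Your pointwise bound is computed correctly and in fact more carefully than the appendix states it (the appendix writes exponent $2r$, but the correct exponent for $\|x_\dag\|\le M$ is $r$, exactly as you derive; only the latter is strong enough for the corollary). The duality argument for the second inclusion is a reasonable variant of the paper's (implicitly direct) approach.

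However, the far-region bookkeeping you propose does not close numerically, and this is a genuine gap. To make the $\xi_\dag$-integral $\int\langle\|\xi\|\rangle^{2r}(1+\|\xi_\dag\|)^{-2N}\,d\xi_\dag$ converge you need $N>r+d$; but paying for $N$ pointwise derivatives of $u$ via Sobolev embedding requires $\|u\|_{H^{N+d+1}}$, hence $r\ge N+d+1>r+2d+1$, which is impossible for any $r$, not merely small $r$. The fix is to avoid paying pointwise. One clean route: for each fixed $\xi_0$ apply Plancherel in $\xi_\dag$, noting that $\pFBI u(x_\dag,\xi)$ is (up to an explicit phase and prefactor) the full Fourier transform of $y\mapsto e^{-\langle\xi_0\rangle\|y_\dag-x_\dag\|^2/2}u(y)$ evaluated at $\xi$; the weight $\|\xi_\dag\|^{2r}$ then becomes $(-\Delta_{y_\dag})^r$ applied to that product, the $y_\dag$-derivatives of the Gaussian factor are killed by the Gaussian decay (since $y_\dag\in K_\dag$ and $\|x_\dag\|>2R$), and the $y_\dag$-derivatives of $u$ land in $L^2$ and are controlled by exactly $\|u\|_{H^r}$ --- no Sobolev embedding, no overshoot, and no restriction to large $r$. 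Alternatively one can insert a cutoff $\rho\in C_0^\infty$ with $\rho\equiv 1$ on $K$, write $u=\langle D\rangle^{M}\bigl(\langle D\rangle^{-M}u\bigr)$, and integrate $\langle D\rangle^M$ by parts onto $\rho\,\overline{\Phi_{x_\dag,\xi}}$; this shows the far-region piece is in fact bounded $H^{-M}\to L^2$ for every $M$, which is stronger than needed. Either repair closes the argument for all the values of $r$ the corollary covers.
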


\subsection{The action of linear transformations on $H^r_\aniso$.}\label{ss:l0}
Let us consider a linear transformation
\[
\mathrm{Id}\oplus B:\real^{2d+1}\to \real^{2d+1}, \quad (\mathrm{Id}\oplus B)(x_0, x_\dag)=(x_0, B(x_\dag))
\]
where $B:\real^{2d}\to \real^{2d}$ is a linear transformation satisfying the following hyperbolicity conditions for  some large $\lambda\gg 1$:
\begin{itemize}
\setlength{\itemsep}{6pt plus 1pt minus 1pt}

\item[(B1)] $B(\real^{2d}\setminus \cone^*_-(1/10))\subset \cone^*_+(1/10)$, $B^{-1}(\real^{2d}\setminus \cone^*_+(1/10))\subset \cone^*_-(1/10)$, 
\item[(B2)] $\|B(v)\|\ge \lambda \|v\|$ if $v\in \real^{2d}\setminus \cone^*_-(1/10)$, and
\item[(B3)] $\|B^{-1}(v)\|\ge \lambda \|v\|$ if $v\in \real^{2d}\setminus \cone^*_+(1/10)$.
\end{itemize}
Below we study the pull-back operator 
\[
\L_{\mathrm{Id}\oplus B}u=u\circ (\mathrm{Id}\oplus B)
\]
acting on the anisotropic Sobolev space $H^r_\aniso$, as a simple model of the transfer operator (\ref{eqn:lfg}). To this end, we introduce the operator 
\[
\widehat{\L}_{\mathrm{Id}\oplus B}=\pFBI\circ \L_{\mathrm{Id}\oplus B}\circ \pFBI^*
\]
which makes the following diagram commutes:
\[
\begin{CD}
\Sch(\real^{2d}\oplus \real^{2d+1})@>{\widehat{\L}_{\mathrm{Id}\oplus B}}>> \Sch(\real^{2d}\oplus \real^{2d+1})\\
@A{\pFBI}AA @A{\pFBI}AA \\
\Sch(\real^{2d+1})@>{\L_{\mathrm{Id}\oplus B}}>> \Sch(\real^{2d+1})
\end{CD}
\]
In order to see that the operator $\L_{\mathrm{Id}\oplus B}$ induces a bounded operator on 
$H^r_\aniso$, it is enough to check that the lift $\widehat{\L}_{\mathrm{Id}\oplus B}$ extends to a bounded operator on the weighted $L^2$ space $L^2(\real^{2d}\oplus \real^{2d+1};\weight^r_\aniso)$ with the norm $\|u\|:=\|\weight^r_\aniso\cdot u\|_{L^2}$. 

Recall that the partial FBI transform is a combination of the Fourier transform in the flow direction and the FBI transform with some scaling in the transversal directions. Since the map $\mathrm{Id}\oplus B$ preserves the frequency in the flow direction, we can separate the actions of  
$\widehat{\L}_{\mathrm{Id}\oplus B}$ into each frequency. Thus, taking the scaling in the transversal direction into account,  we see that the operator norm of the lift $\widehat{\L}_{\mathrm{Id}\oplus B}$ on $L^2(\real^{2d}\oplus\real^{2d+1};\weight^r_\aniso)$ equals the  supremum of the operator norms of
\begin{equation}\label{eqn:lbb}
\widehat{\L}_B:L^2(\real^{2d}\oplus\real^{2d};\mathcal{W}_{\langle \xi_0\rangle})\to L^2(\real^{2d}\oplus\real^{2d};\mathcal{W}_{\langle \xi_0\rangle})
\end{equation}
for $\xi_0\in \real$,  where $\widehat{\L}_B$ is defined in (\ref{eqn:liftB}) and $\mathcal{W}_s:\real^{2d}\oplus\real^{2d}\to \real$ is defined by
\begin{equation}\label{wss}
\mathcal{W}_s(x_\dag,\xi_\dag)=W^{2r}_\aniso\left(\frac{\xi_\dag+Jx_\dag}{\langle (1+s^{-1}\cdot \|\xi_\dag+Jx_\dag\|^2)^{1/2}\rangle^{1/2}}\right).
\end{equation}
Next recall the change of variables discussed in Subsection \ref{ss:sep}, in particular, the commutative diagram (\ref{CD:lb}). 
Since we have
\begin{equation}\label{ws}
\mathcal{W}_s\circ Z^{-1}(z,w)=\mathcal{V}_s(z):=W^{2r}_\aniso\left(\frac{z}{\langle (1+s^{-1}\cdot \|z\|^2)^{1/2}\rangle^{1/2}}\right),
\end{equation}
the operator (\ref{eqn:lbb}) is identified with the tensor product of 
\[
\widehat{\L}_0:L^2(\real^{2d}, \mathcal{V}_s)\to L^2(\real^{2d},\mathcal{V}_s)
\quad\mbox{and}\quad
\overline{\widehat{\L}_0}:L^2(\real^{2d})\to L^2(\real^{2d})
\]where $\widehat{\L}_0$ is that defined in Subsection \ref{ss:sep}. And, for these two operators, we  show\begin{lemma}\label{lm:linFBI}
The operator $\widehat{\L}_0$ extends naturally to bounded operators both on the Hilbert spaces $L^2(\real^{2d})$ and $L^2(\real^{2d}, \mathcal{V}_s)$ for $s\ge 1$. Further,
\begin{itemize}
\setlength{\itemsep}{6pt plus 1pt minus 1pt}
\item[{\rm (1)}] the operator norm of $\widehat{\L}_0:L^2(\real^{2d})\to L^2(\real^{2d})$ is  $1$, and
\item[{\rm (2)}] the operator norm of $\widehat{\L}_0:L^2(\real^{2d}, \mathcal{V}_s)\to L^2(\real^{2d},\mathcal{V}_s)$ is bounded  by
\[
C_0\cdot  \max\{d(B)^{-1/2}, \,d(B)^{1/2}\cdot \lambda^{-r}\}
\]
 where $C_0$ is a constant that does not depend on $B$ nor $s\ge 1$.
\end{itemize}
\end{lemma}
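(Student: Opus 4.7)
Plan for (1): I use the commutative diagram (\ref{CD:lb}) together with the fact that $B$ preserves $\omega_\dag$ (so $\det B=1$), which makes $\L_B$ a unitary on $L^2(\real^{2d})$. Consequently $\widehat{\L}_B=\FBI\circ\L_B\circ\FBI^*$ acts isometrically on $\FBI(L^2(\real^{2d}))$ and annihilates its orthogonal complement, so it has operator norm $1$ on $L^2(\real^{2d}\oplus\real^{2d})$. Conjugation by the unitary $Z^*$ identifies $\widehat{\L}_B$ with $\widehat{\L}_0\otimes\overline{\widehat{\L}_0}$, whose operator norm on the tensor product is $\|\widehat{\L}_0\|_{L^2}^{2}$. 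Equating the two quantities yields $\|\widehat{\L}_0\|_{L^2}=1$.

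Plan for (2): I estimate the Schwartz kernel of $\widehat{\L}_0 = d(B)^{1/2}\Pj_0\L_0\Pj_0$ explicitly and then apply a weighted Schur test. Writing the composition as
\[
K(z,z')= d(B)^{1/2}\int K_{\Pj_0}(z,z'')\,K_{\Pj_0}({}^tB^{-1}z'',z')\,dz'',
\]
the Gaussian integration in $z''$ can be carried out by completing the square, yielding
\[
|K(z,z')|\le C\,d(B)^{-1/2}\exp(-Q_B(z,z')),
\]
where $Q_B$ is a positive quadratic form vanishing on the graph $\{z = {}^tB z'\}$, with anisotropic widths adapted to the principal axes of $B$. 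The prefactor $d(B)^{-1/2}$ arises from combining the $d(B)^{1/2}$ normalization of $\widehat{\L}_0$ with the Gaussian volume factor proportional to $1/d(B)$ produced by the $z''$-integration.

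Next, Schur's test applied to the kernel $K_R(z,z') = (\mathcal{V}_s(z)/\mathcal{V}_s(z'))^{1/2}K(z,z')$ of the $L^2$-conjugate of $\widehat{\L}_0$ reduces the problem to bounding the integrals $\sup_z \int|K(z,z')|(\mathcal{V}_s(z)/\mathcal{V}_s(z'))^{1/2}\,dz'$ and its $z$/$z'$ swap. I split the integration according to the cones $\cone^*_\pm$ and the dyadic scale of $\|z\|$. When $z$ and the typical $z'\approx {}^tB^{-1}z$ lie in a common cone, the hyperbolicity conditions (B1)--(B3) together with the profile of $W^{2r}_\aniso$ provide a ratio bound $(\mathcal{V}_s(z)/\mathcal{V}_s(z'))^{1/2}\le C\lambda^{-r}$, and integrating against the Gaussian volume (itself of order $d(B)$) produces the summand $Cd(B)^{1/2}\lambda^{-r}$. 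In the cross-cone regime the Gaussian's anisotropy is aligned with the weight's anisotropy, and a more careful estimate keeps the contribution at the level $Cd(B)^{-1/2}$ without accumulating the full volume amplification. Taking the square root of the product of the two Schur suprema yields the stated bound.

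The main obstacle is the analysis in the cross-cone regime: the Gaussian $K(z,z')$ is widest in the contracting direction of $B$, which is precisely the direction in which $\mathcal{V}_s$ grows, so a naive Schur estimate gives only $d(B)^{1/2}$ and one must exploit the precise cancellation between the Gaussian's anisotropic decay scale and the weight's anisotropic growth. A further technical point is that $\mathcal{V}_s$ has two distinct scaling regimes (depending on whether $\|z\|^{2}$ is below or above the threshold $s\ge 1$), and the smooth partition $\psi_\pm$ at the cone boundary must be handled; obtaining a bound that is uniform in $s\ge 1$ requires treating both regimes carefully, most naturally by a dyadic decomposition in $\|z\|$ and in angular direction.
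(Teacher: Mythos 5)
Your argument for part (1) is the same as the paper's: the $Z^*$-conjugation identifies $\widehat{\L}_B$ with $\widehat{\L}_0\otimes\overline{\widehat{\L}_0}$, and the operator norm of $\widehat{\L}_B=\FBI\circ\L_B\circ\FBI^*$ on $L^2$ is $1$ because $\L_B$ is unitary (using $\det B=1$).

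For part (2), however, what you have written is a plan rather than a proof, and the plan leaves precisely the hard step unresolved. Your kernel bound $|K(z,z')|\le C\,d(B)^{-1/2}\exp(-Q_B(z,z'))$ is correct: performing the Gaussian $w$-integral in $d(B)^{1/2}\int K_{\Pj_0}(z,w)K_{\Pj_0}({}^tB^{-1}w,z')\,dw$ gives $Q_B(z,z')=\tfrac14\|(I+{}^tB^{-1}B^{-1})^{-1/2}({}^tB^{-1}z-z')\|^2$, and the Gaussian volume factor $\det(I+{}^tB^{-1}B^{-1})^{-1/2}\propto d(B)^{-1}$ combines with the $d(B)^{1/2}$ normalization. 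But then the unweighted Schur integral $\int\exp(-Q_B(z,z'))\,dz'$ is of order $d(B)$, so the weighted Schur integral you must control is of the dangerous form $d(B)^{-1/2}\cdot d(B)\cdot(\text{weight ratio})$, and you need the weight ratio to claw back a full factor of $d(B)$ in exactly the regions where the Gaussian is widest. You explicitly flag this (``the main obstacle is the analysis in the cross-cone regime... one must exploit the precise cancellation...''), and you describe the intended dyadic-plus-angular decomposition, but you do not carry out the estimate. That is the content of the lemma: asserting that the anisotropies cancel is the thing that requires proof.

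The paper's proof handles this differently and avoids the problem you describe. First, it replaces the exact kernel by the cruder bound $d(B)^{1/2}\,|u|\mapsto d(B)^{1/2}\mathcal{Q}\L_0\mathcal{Q}|u|$ with $\mathcal{Q}=\phi\ast{}$, and shows via the estimate (\ref{eqn:v}) that $\mathcal{Q}$ is bounded on $L^2(\mathcal{V}_s)$ uniformly in $s\ge1$ (this estimate, which rests on a technical regularity assumption on $\psi_\pm$ recorded in Remark \ref{rem:v}, is also implicitly needed in your argument but not mentioned). Second, and this is the key organizational device you are missing, it introduces the ellipsoid $\mathcal{E}=\{{}^tz(I+B^{-1}{}^tB^{-1})^{-1}z\le1\}$, which is precisely matched to the anisotropic width of the Gaussian. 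Outside $\mathcal{E}$ a pointwise estimate $\mathcal{V}_s({}^tBz)\le C\lambda^{-r}\mathcal{V}_s(z)$ gives the $\lambda^{-r}$-factor by a change of variables, with no Schur integration and hence no volume amplification. Inside $\mathcal{E}$ one does a Schur test, but the integration region is now the ellipsoid, and the two one-sided integrals in (\ref{eqn:kzz}) are each of order $d(B)^{-1}$ because the ellipsoid occupies exactly the Gaussian's spread; no separation into same-cone and cross-cone cases is needed. You should either supply the cross-cone estimate you sketch (tracking the two scaling regimes of $\mathcal{V}_s$ and the dyadic shells carefully enough to get a bound uniform in $s\ge1$) or, more efficiently, adopt the ellipsoid split, which is the ingredient that makes the Schur estimates straightforward.
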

Once we prove this lemma, we conclude that the operator norm of $\L_{\mathrm{Id}\oplus B}$ on $H^r_\aniso$ is bounded by $C_0\cdot  \max\{d(B)^{-1/2}, \,d(B)^{1/2}\cdot \lambda^{-r}\}$. 
Notice that  the last quantity corresponds to (\ref{eqn:bound}) in Theorem \ref{th:localmain} since $d(B)$ is proportional to $\det B|_{E^+}$. In the following sections, we will prove Theorem \ref{th:localmain} by reducing it to this simple case.

\begin{proof}[Proof of Lemma \ref{lm:linFBI}]
The claim (1) follows, for instance, from the fact that the operator norm of 
$\widehat{\L}_0\otimes \widehat{\L}_0$ on $L^2(\real^{2d}\oplus\real^{2d})$ equals 
that of $\widehat{\L}_B$ on $L^2(\real^{2d}\oplus\real^{2d})$ and hence equals $1$, as we showed in Subsection \ref{ss:sep}. 
Below we prove the claim (2). We will use $C_0$ as a generic symbol for constants that do not depend on $B$ nor $s$.

From the definition, the operator $\widehat{\L}_0$ can be  written as an integral operator
\[
\widehat{\L}_0u(z)=\int K(z,z')\, u(z')\, dz'
\]
and the kernel satisfies
\begin{align*}
|K(z,z')|&\le (2\pi)^{-d} \cdot d(B)^{1/2}\cdot \int \exp(-\|z-w\|^2/4-\|{}^tB^{-1}w-z'\|^2/4)dw.
\end{align*}
Hence, to prove the claim (2),  it is enough to show 
\[
\| \mathcal{Q}\circ \L_0\circ \mathcal{Q}:L^2(\real^{2d}, \mathcal{V}_s)\to L^2(\real^{2d}, \mathcal{V}_s)\|\le C_0\cdot \max\{d(B)^{-1}, \,\lambda^{-r}\}
\]
where $\mathcal{Q}$ is the convolution operator
\[
\mathcal{Q} u(z)=\phi*u(z)\quad \mbox{with \;\;$\phi(z)=\exp(-\|z\|^2/4)$.}
\]

By using the rapidly decaying property of $\phi$ and the definition of $\mathcal{V}_s$, 
we have
\begin{equation}\label{eqn:v}
\mathcal{V}_s(z)\cdot \phi(z-z')\le C_0\cdot \mathcal{V}_s(z')\cdot \langle \|z-z'\|\rangle^{-2d-1}
\end{equation}
where $C_0$ is a constant that does not depend on  $s$, provided that we take appropriate functions $\psi_{\pm}$ in the definition of $W^t_\aniso(\cdot)$. (See Remark \ref{rem:v} below.) This estimate implies in particular
\[
\|\mathcal{Q}:L^2(\real^{2d}, \mathcal{V}_s)\to L^2(\real^{2d}, \mathcal{V}_s)\|\le C_0.
\] 
\begin{remark}\label{rem:v}
To have the inequality (\ref{eqn:v}) hold, it may be necessary to put a  technical condition on the functions $\psi_{\pm}$ to avoid pathological cases, though we do not know whether it is really necessary. For instance, if we assume the condition that the first derivatives of the functions $\xi\mapsto (\psi_{\pm})^\epsilon ([\xi])$ is bounded on the unit sphere for each $\epsilon>0$, which can be fulfilled easily, we can show that the first derivatives of $\log \mathcal{V}_s$ is bounded by a constant independent of $s\ge 1$ and hence the inequality  (\ref{eqn:v}) follows. 
\end{remark}

Let $\mathcal{E}$ be the ellipsoid in $\real^{2d}$ defined by the condition
\[
|{}^tz\cdot (I+B^{-1}\cdot {}^tB^{-1})^{-1}\cdot z|\le 1.
\]
From the definition of $\mathcal{V}_s$ and hyperbolicity of $B$, it holds
\[
\mathcal{V}_s(B^t z)\le C_0\cdot \lambda^{-r}\cdot \mathcal{V}_s(z)\qquad \mbox{for $z\notin \mathcal{E}$}
\]
and therefore we have
\begin{align}\label{eqn:q1}
\|\mathcal{Q}\circ \L_0\circ &(\mathrm{Id}-\mathbf{1}_{\mathcal{E}})\circ \mathcal{Q}:L^2(\real^{2d}, \mathcal{V}_s)\to L^2(\real^{2d}, \mathcal{V}_s)\|\\
&\le C_0\|\L_0\circ (\mathrm{Id}-\mathbf{1}_{\mathcal{E}}):L^2(\real^{2d}, \mathcal{V}_s)\to L^2(\real^{2d}, \mathcal{V}_s)\|\le C_0\cdot \lambda^{-r}\notag
\end{align}
where $\mathbf{1}_{\mathcal{E}}$ denotes the multiplication by the characteristic function  of ${\mathcal{E}}$.

The operator norm of the remainder part 
\begin{align}\label{eqn:q2}
\mathcal{Q}\circ \L_0\circ \mathbf{1}_{\mathcal{E}}\circ \mathcal{Q}:L^2(\real^{2d}, \mathcal{V}_s)\to L^2(\real^{2d}, \mathcal{V}_s)
\end{align}
equals that of the integral operator
\[
\L':L^2(\real^{2d})\to L^2(\real^{2d}), \quad \L'u(z)=\int k(z,z')\, u(z') \,dz'
\]
with the kernel
\[
k(z,z')=
\int_{\mathcal{E}}\frac{\mathcal{V}_s(z)}{\mathcal{V}_s(z')}\cdot  \phi(z-{}^tBw)\cdot \phi(w-z') \, dw. 
\]
Once we prove the estimates
\begin{equation}\label{eqn:kzz}
\sup_{z'} \int k(z,z')\, dz\le C_0\cdot d(B)^{-1} \quad \mbox{and}\quad 
\sup_{z} \int k(z,z')\, dz'\le C_0\cdot d(B)^{-1},
\end{equation}
the Schur test\cite[Lemma 18.1.12]{Hormander3} will yields the estimate that  the operator norm of~$\L'$ is bounded by $C_0\cdot d(B)^{-1}$ and so is the operator norm in (\ref{eqn:q2}), which together with   (\ref{eqn:q1}) completes the proof of  the claim (2).  

To finish the proof, we prove (\ref{eqn:kzz}). 
From hyperbolicity of $B$ and the definition of the function $\mathcal{V}_s(\cdot)$, it is not difficult to check
\[
\int_{\mathcal{E}}\;\frac{\mathcal{V}_s({}^tBw)}{\mathcal{V}_s(z')}\cdot \phi(w-z') \,dw \le C_0\cdot d(B)^{-1}\quad \mbox{for all $z'\in \real^{2d}$}
\] 
and 
\[
\int_{B\mathcal{E}}\;\frac{\mathcal{V}_s(z)}{\mathcal{V}_s({}^tB^{-1}w)}\cdot \phi(z-w) \,dw \le C_0\cdot d(B)^{-1}\quad \mbox{for all $z\in \real^{2d}$.}
\]
By virtue of (\ref{eqn:v}), the former inequality above implies the former claim in (\ref{eqn:kzz}):
\begin{align*}
\int k(z,z')\, dz&\le  C_0\int \left(\int_{\mathcal{E}}\frac{\mathcal{V}_s(B^tw)}{\mathcal{V}_s(z')}\cdot  
\frac{\phi(w-z') }{\langle \|z-{}^tBw\|\rangle^{2d+1}}\,dw\right) dz\le C_0\cdot d(B)^{-1}.
\end{align*}
Similarly the latter inequality above implies the latter claim in (\ref{eqn:kzz}).
\end{proof}

\section{Transfer operators on $\real^{2d+1}$}\label{s:lt}
\subsection{Transfer operators and their lifts}\label{ss:kernelest}
In this section and the following, we consider in the setting of Theorem \ref{th:localmain}: 
Let $F:U\to U'$ be a $\lambda$-hyperbolic contact diffeomorphism with large $\lambda\gg 1$ and $g:\real^{2d+1}\to \complex$ a $C^\infty$ function whose support is contained in $U$; And we consider the transfer operator 
\[
\L=\L_{F,g}:C^{\infty}_0(U')\to C^{\infty}_0(U),\quad \L u(x)= g(x)\cdot u(F(x)).
\]
We define the lift of $\L$ with respect the partial FBI transform $\pFBI$ by
\[
\widehat{\L}:=\pFBI\circ \L\circ \pFBI^*:\Sch(\real^{2d}\oplus \real^{2d+1})\to \Sch(\real^{2d}\oplus \real^{2d+1}).
\]
Then, as (\ref{eqn:cdb}) in the case of FBI transform, the following diagram commutes:
\[
\begin{CD}
\Sch(\real^{2d}\oplus \real^{2d+1})@>{\widehat{\L}}>> \Sch(\real^{2d}\oplus \real^{2d+1})\\
@A{\pFBI}AA @A{\pFBI}AA\\
C^\infty_0(U')@>{\L}>>  C^\infty_0(U)
\end{CD}
\]

The operator $\widehat{\L}$ is an integral operator 
\[
\widehat{\L} u(x_\dag,\xi)=\int K(x_\dag,\xi;z_\dag,\eta)\, u(z_\dag,\eta)\, dz_\dag d\eta
\]
with the kernel
\begin{equation}\label{eqn:K}
K(x_\dag,\xi;z_\dag,\eta)=\int \overline{\Phi_{x_\dag,\xi}(y)}\cdot  g(y)\cdot \Phi_{z_\dag,\eta}(F(y))\, dy.
\end{equation}
Note that $K(x_\dag,\xi;z_\dag,\eta)$ is bounded in absolute value by 
\[
\frac{a_{2d}^2\cdot \langle \xi_0\rangle^{d/2}\cdot  \langle \eta_0\rangle^{d/2}}{2\pi}\cdot 
\left|\int g(y)\cdot e^{- i(\eta F(y)-\xi y)}\cdot e^{-\langle \xi_0\rangle\|x_\dag-y_\dag\|^2/2-\langle \eta_0\rangle\|F_\dag(y_\dag)-z_\dag\|^2/2}\, dy\right|
\]
Applying integration by parts to the integral above, we obtain the following crude estimate on the kernel $K(\cdot)$ of $\widehat{\L}$.
\begin{lemma}
\label{lm:ker}
For any $\rho>0$, there exits a constant $C_\rho>0$, which depends also on $F$ and $g$, such that
\begin{equation}\label{ine:ker}
|K(x_\dag,\xi;z_\dag,\eta)|\le C_\rho \cdot \langle \xi_0\rangle^{d/2}\langle \eta_0\rangle^{d/2}\cdot \int_{\supp\, g}
\kappa(x_\dag,\xi;z_\dag,\eta;y)^{-\rho} dy
\end{equation}
where
\begin{equation}\label{eqn:kappa}
\kappa(x_\dag,\xi;z_\dag,\eta;y)=\langle \xi_0-\eta_0\rangle
\left\langle
\frac{ \|x_\dag-y_\dag\|}{\langle\xi_0\rangle^{-1/2}}\right\rangle
\left\langle
\frac{ \|F_\dag(y_\dag)-z_\dag\|}{\langle \eta_0\rangle^{-1/2}}\right\rangle 
\left\langle 
\frac{\|\xi-{}^tDF_y \eta\|}{ \langle\|\eta\|\rangle^{1/2}} \right\rangle.
\end{equation}
\end{lemma}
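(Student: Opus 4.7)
The plan is to make the integrand of (\ref{eqn:K}) fully explicit and then extract the four factors of $\kappa^{-\rho}$ in sequence: one from integration by parts in $y_0$, two from Gaussian concentration, and the fourth---the delicate one---from a non-stationary phase argument in $y_\dag$.

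Using the factorization $F(y)=(y_0+f(y_\dag),F_\dag(y_\dag))$ from (\ref{Fh}), the kernel becomes
\[
K=\frac{a_{2d}^2\langle\xi_0\rangle^{d/2}\langle\eta_0\rangle^{d/2}}{2\pi}\int g(y)\,e^{i\Psi(y)}\,e^{-Q(y_\dag)}\,dy,
\]
with the real phase
\[
\Psi(y)=(\eta_0-\xi_0)y_0+\eta_0 f(y_\dag)+\eta_\dag\cdot(F_\dag(y_\dag)-z_\dag/2)-\xi_\dag\cdot(y_\dag-x_\dag/2)
\]
and the Gaussian weight $Q(y_\dag)=\tfrac12\langle\xi_0\rangle\|y_\dag-x_\dag\|^2+\tfrac12\langle\eta_0\rangle\|F_\dag(y_\dag)-z_\dag\|^2$. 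The block structure of $DF_y$ gives $\partial_{y_0}\Psi=\eta_0-\xi_0$ and $\nabla_{y_\dag}\Psi=\pi_\dag({}^tDF_y\eta-\xi)$, so $\|{}^tDF_y\eta-\xi\|^2=(\xi_0-\eta_0)^2+\|\nabla_{y_\dag}\Psi\|^2$. Since $\partial_{y_0}\Psi$ is constant in $y$ and $g$ has compact $y_0$-support, repeatedly integrating by parts in $y_0$ contributes $\langle\xi_0-\eta_0\rangle^{-\rho}$ without touching $Q$. For the second and third factors, split $e^{-Q}=e^{-Q/2}\cdot e^{-Q/2}$ and estimate one copy by the pointwise bounds $e^{-\langle\xi_0\rangle\|y_\dag-x_\dag\|^2/4}\le C_\rho\langle\langle\xi_0\rangle^{1/2}\|y_\dag-x_\dag\|\rangle^{-\rho}$ and its $\eta_0$-analogue, reserving the other copy inside the integral for later.

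The fourth factor comes from integration by parts in $y_\dag$ with the operator $L=(i|\nabla_{y_\dag}\Psi|^2)^{-1}\nabla_{y_\dag}\Psi\cdot\nabla_{y_\dag}$, which preserves $e^{i\Psi}$. After $N$ iterations $L^{*N}$ acts on the reserved amplitude $g(y)\cdot e^{-Q/2}$. Derivatives of $g$ contribute constants; a derivative of $e^{-\langle\xi_0\rangle\|y_\dag-x_\dag\|^2/4}$ pulls out $\langle\xi_0\rangle(y_\dag-x_\dag)$, whose product with the remaining half of that Gaussian is uniformly of size at most $C\langle\xi_0\rangle^{1/2}$; symmetrically, a derivative of the $\eta_0$-Gaussian costs at most $C\langle\eta_0\rangle^{1/2}\cdot\|DF_\dag\|$. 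Having already gained $\langle\xi_0-\eta_0\rangle^{-\rho}$, we may restrict to the regime $\langle\xi_0\rangle\sim\langle\eta_0\rangle\le C\langle\|\eta\|\rangle$, so each application of $L^*$ costs at most $C\langle\|\eta\|\rangle^{1/2}/|\nabla_{y_\dag}\Psi|$. Iterating and combining with the $y_0$ factor yields $\langle\|\xi-{}^tDF_y\eta\|/\langle\|\eta\|\rangle^{1/2}\rangle^{-\rho}$.

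The main obstacle is the semiclassical bookkeeping in the final step: each amplitude derivative must cost only $\langle\|\eta\|\rangle^{1/2}$, not the naive $\langle\|\eta\|\rangle$. This half-power is exactly the FBI scaling---the wave packets $\Phi_{x_\dag,\xi}$ have position uncertainty $\langle\xi_0\rangle^{-1/2}$ rather than $\langle\xi_0\rangle^{-1}$---and it is what forces $\langle\|\eta\|\rangle^{1/2}$ in the denominator of the last bracket of $\kappa$. Splitting the Gaussian before the integration by parts, as above, is what rigorously realizes this cost; collecting the four independent gains and absorbing the prefactor inside the integrand yields (\ref{ine:ker}).
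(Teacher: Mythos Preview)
Your strategy is the same as the paper's---integration by parts in $y_0$ for the factor $\langle\xi_0-\eta_0\rangle^{-\rho}$, Gaussian tails for the two spatial factors, and a non-stationary phase in $y_\dag$ for the last factor---but the execution of the fourth step has a genuine gap.

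When you apply $L^*$, you must differentiate not only the amplitude $g\,e^{-Q/2}$ but also the coefficients $\nabla_{y_\dag}\Psi/|\nabla_{y_\dag}\Psi|^{2}$ of $L$ itself. Since $\nabla_{y_\dag}\Psi=\pi_\dag({}^tDF_y\eta-\xi)$ depends on $y$ through $DF_y$, one $y_\dag$-derivative of this coefficient produces a factor of size $C\|\eta\|/|\nabla_{y_\dag}\Psi|^{2}$, not $C\langle\|\eta\|\rangle^{1/2}/|\nabla_{y_\dag}\Psi|$. Thus your asserted ``each application of $L^*$ costs at most $C\langle\|\eta\|\rangle^{1/2}/|\nabla_{y_\dag}\Psi|$'' is false in general: there is a second term of the form $\langle\|\eta\|\rangle/|\nabla_{y_\dag}\Psi|^{2}$ that you have not bounded. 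Relatedly, your unregularized $L=(i|\nabla_{y_\dag}\Psi|^2)^{-1}\nabla_{y_\dag}\Psi\cdot\nabla_{y_\dag}$ is singular where $\nabla_{y_\dag}\Psi=0$, so the integration by parts is not even well-defined without a cutoff or a case split.

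The paper fixes exactly these two points. It uses the \emph{regularized} operator
\[
L_1=\frac{1+i({}^tDF_y\eta-\xi)\cdot\partial_y}{1+\|{}^tDF_y\eta-\xi\|^2},
\]
so the integration by parts is always legitimate, and it records the outcome as an inductive bound whose last factor has the two-term shape
\[
T_4=\frac{\langle\|\eta\|\rangle^{1/2}}{\langle\|{}^tDF_y\eta-\xi\|\rangle}+\frac{\langle\|\eta\|\rangle}{\langle\|{}^tDF_y\eta-\xi\|\rangle^{2}},
\]
the second summand being precisely the coefficient-derivative contribution you omitted. The proof then closes with a dichotomy: if $\langle\|\eta\|\rangle^{1/2}\le\langle\|{}^tDF_y\eta-\xi\|\rangle$ both summands of $T_4$ are dominated by the first and one recovers the desired fourth factor of $\kappa^{-1}$; otherwise that fourth factor is comparable to $1$, so one simply takes $\nu'=0$ (no $y_\dag$-integration by parts at all). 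Incorporating this regularization and the two-term accounting into your argument would make it correct and essentially identical to the paper's.
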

\begin{proof}
Consider the differential operators
\[
L_0=\frac{1+i(\eta_0-\xi_0)\cdot \partial_{y_0}}{1+(\eta_0-\xi_0)^2},\qquad  L_1 = \frac{1+i({}^tDF_y(\eta)-\xi)\cdot \partial_{y}}{1+\|{}^{t}DF_y(\eta)-\xi\|^2}.
\]
Since 
\[
L_0\left(e^{-i(\eta F(y)-\xi y)}\right)=L_1\left(e^{-i(\eta F(y)-\xi y)}\right)
=e^{-i(\eta F(y)-\xi y)}, 
\]
we obtain, by integration by parts, that
\begin{align*}
&|K(x_\dag,\xi;z_\dag,\eta)|= \frac{a_{2d}^2\cdot \langle \xi_0\rangle^{d/2}\cdot  \langle \eta_0\rangle^{d/2}}{2\pi}\cdot\left| \int K_{\nu, \nu'}(x_\dag,\xi;z_\dag,\eta;y) dy\right|
\end{align*}
for any integers $\nu, \nu'\ge 0$, where
\[
K_{\nu, \nu'}(x_\dag,\xi;z_\dag,\eta;y)= (L^*_1)^{\nu'}(L_0^*)^{\nu}\left(e^{-\langle \xi_0\rangle \|x_\dag-y_\dag\|^2/2-\langle \eta_0\rangle \|F_\dag(y_\dag)-z_\dag\|^2/2}\cdot  g(y)\right). 
\]
The calculation in the definition of $K_{\nu, \nu'}(\cdot)$ above is not simple, but  one can check the following estimate by an inductive argument on $\nu$ and $\nu'$. (See also Remark \ref{re1} below.): For any integers $\mu,\mu'>0$, there exists a constant $C>0$, which depends also on $F$ and $g$, such that
\begin{equation}\label{eqn:kert}
|K'(x_\dag,\xi;z_\dag,\eta;y)|\le  
C \cdot  T_1(x_\dag,y,\xi_0)^{\mu}\cdot  T_2(y,z_\dag,\eta_0)^{\mu'}
\cdot T_3(\xi_0,\eta_0)^{\nu-\nu'}\cdot T_4(\xi, \eta, y)^{\nu'}
\end{equation}
where $y=(y_0,y_\dag)$ and 
\begin{align*}
&T_1(x_\dag,y,\xi_0)=\frac{1}{ \langle \langle\xi_0\rangle^{1/2}\cdot\|x_\dag-y_\dag\|\rangle},\quad T_2(y,z_\dag,\eta_0)=\frac{1}{\langle \langle\eta_0\rangle^{1/2}\cdot \|F_\dag(y_\dag)-z_\dag\|\rangle},\\
&T_3(\xi_0,\eta_0)=\frac{1}{\langle \xi_0-\eta_0\rangle},\quad T_4(\xi, \eta,y)=
\frac
 { \langle \|\eta\|\rangle^{1/2}}
 {\langle \|{}^tDF_y(\eta)-\xi\|\rangle}+
 \frac
 { \langle \|\eta\|\rangle}
 {\langle \|{}^tDF_y(\eta)-\xi\|\rangle^2}.
\end{align*}
\begin{remark}\label{re1}
In deriving the estimate (\ref{eqn:kert}), use the fact that 
\begin{align*}
&\left| \partial_y^\alpha \left(e^{-\langle \xi_0\rangle \|x_\dag-y_\dag\|^2/2-\langle \eta_0\rangle \|F_\dag(y_\dag)-z_\dag\|^2/2}\cdot  g(y)\right)\right|\\
&\qquad \qquad \le C_{\alpha,\mu,\mu'}\cdot  \max\{\langle \xi_0\rangle, \langle \eta_0\rangle\}^{|\alpha|/2}\cdot T_1(x_\dag,y,\xi_0)^\mu\cdot T_2(y,z_\dag,\eta_0)^{\mu'}
\end{align*}
for any multi-index $\alpha$ and any $\mu, \mu'\ge 0$, and that $
\langle \xi_0\rangle\le \langle \eta_0\rangle +\langle \xi_0-\eta_0\rangle$.
\end{remark}

In the case where $\langle\|\eta\|\rangle^{1/2}\le \langle\| {}^tDF_y(\eta)-\xi\|\rangle$ holds, we have
\[
T_4(\xi, \eta,y)\le \frac{4}{\langle \| {}^tDF_y(\eta)-\xi\|/\langle \|\eta\|\rangle^{1/2}\rangle} 
\]
and, hence, the inequality (\ref{ine:ker}) follows from (\ref{eqn:kert}). 
Otherwise, the inequality  (\ref{ine:ker}) follows again from the same estimate  (\ref{eqn:kert}) but with setting $\nu'=0$.
\end{proof}

\subsection{Decomposition of transfer operators}
Our next task is to decompose the transfer operator $\L$ and its lift $\widehat{\L}$ into three parts, namely, {\em the compact, hyperbolic and central part}. 
The decomposition depends on two constants. One of the constants is $\tau>1/2$. 
We take $\tau$ close to $1/2$ according to $d$ and $r$. For instance, it is quite enough to assume
\[
\frac{1}{2}<\tau<\frac{1}{2}+\frac{1}{100\cdot  r\cdot  d}.
\]
The other constant is $N>0$. We will choose $N$ as a large constant in the course of the argument below so that  several claims hold true. Note that the choice of $N$ will depend on $F$, $g$ and $\tau$.
 
We define two function
\[
X_0:\real^{2d}\oplus \real^{2d+1}\to [0,1] \quad\mbox{and}\quad  X_{\ctr}:\real^{2d}\oplus \real^{2d+1}\to [0,1]
\]
as follows: Recall the function $\chi$ introduced in the beginning of Section \ref{s:pFBI}. In the case where $x=0$, we set 
\[
X_0(0,\xi)=\chi(\|\xi\|/N)
\qquad\mbox{and}\qquad
 X_{\ctr}(0,\xi)=\chi\left(\frac{\|\xi_\dag\|}{\langle\xi_0\rangle^{\tau}}\right)
\]
for $\xi=(\xi_0, \xi_\dag)\in \real^{2d+1}$. Then we extend these definitions to the case $x\neq 0$ uniquely so that they are invariant with respect to the natural action of the transformation group $\Af$ on $\real^{2d}\oplus \real^{2d+1}$. More concretely, we set
\begin{align*}
&X_0(x_\dag,\xi)=\chi((|\xi_0|^2+\|\xi-\xi_0\cdot \alpha_0(x_\dag)\|^2)^{1/2}/N)
\intertext{and}
&X_{\ctr}(x_\dag,\xi)=\chi\left(\frac{\|\xi-\xi_0\cdot \alpha_0(x_\dag)\|}{\langle\xi_0\rangle^{\tau}}\right). 
\end{align*}
Note that the support of $X_{0}$ is contained in the neighborhood 
\[
\left\{(x_\dag,\xi)\in \real^{2d}\oplus\real^{2d+1}\;\left| \; |\xi_0|^2+\|\xi-\xi_0\cdot \alpha_0(x_\dag)\|^2\le (2N)^2\right.\right\}
\]
of the zero section, while that of $X_{\ctr}$ is contained in the neighborhood 
\[
\left\{(x_\dag,\xi)\in \real^{2d}\oplus\real^{2d+1}\;\left| \; \|\xi-\xi_0\cdot \alpha_0(x_\dag)\| \le 2\cdot \langle \xi_0\rangle^{\tau}\right.\right\}
\]
of the one-dimensional subbundle spanned by $\alpha_0$.

We define the compact, central and hyperbolic part of the lift $\widehat{\L}$ respectively by
\begin{alignat*}{2}
&\widehat{\L}_{\cpt}:\Sch(\real^{2d}\oplus \real^{2d+1})\to \Sch(\real^{2d}\oplus \real^{2d+1}),&\quad 
&\widehat{\L}_{\cpt}u= \widehat{\L}(X_0\cdot u),\\
&\widehat{\L}_{\ctr}:\Sch(\real^{2d}\oplus \real^{2d+1})\to \Sch(\real^{2d}\oplus \real^{2d+1}),&\quad 
&\widehat{\L}_{\ctr} u= \widehat{\L}(X_{\ctr}\cdot(1-X_0)\cdot  u),\quad \mbox{and}\\
&\widehat{\L}_{\hyp}:\Sch(\real^{2d}\oplus \real^{2d+1})\to \Sch(\real^{2d}\oplus \real^{2d+1}),&\quad 
&\widehat{\L}_{\hyp} u= \widehat{\L}((1-X_{\ctr})\cdot (1-X_0)\cdot u).
\end{alignat*}
Clearly the lift $\widehat{\L}$ is decomposed into these three operators:
\[
\widehat{\L}=\widehat{\L}_{\cpt}+\widehat{\L}_{\ctr}+\widehat{\L}_{\hyp}.
\]
For the transfer operator $\L$ itself, we define its compact, central and hyperbolic part respectively by 
\[
\L_{\sigma}:\Sch(\real^{2d+1})\to \Sch(\real^{2d+1}), \quad \L_{\sigma}=
\pFBI^*\circ \widehat{\L}_{\sigma} \circ \pFBI
\]
with $\sigma=\cpt, \ctr, \hyp$, so that  we have $\L=\L_{\cpt}+\L_{\ctr}+\L_{\hyp}$.

\subsection{The compact part}
The compact part of the transfer operator $\L$ is in fact a compact operator, as the name indicates. 
\begin{lemma}\label{lm:cpt}
The compact part $\L_{\mathrm{cpt}}$ extends to a bounded operator $
\L_{\mathrm{cpt}}:H^r_{\aniso}\to H^r_{\aniso}$. 
This extension is a trace class operator and hence a compact operator. 
\end{lemma}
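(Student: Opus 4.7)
The plan is to show that $\L_{\cpt}$ has a Schwartz kernel that is smooth, compactly supported in the first variable, and rapidly decaying in the second; from this, both boundedness and the trace class property on $H^r_\aniso$ follow by a standard Sobolev embedding argument. The point of departure is the identity $\pFBI^*\circ \pFBI=\Id$ from Proposition \ref{pp24d}, which turns the lift $\widehat{\L}=\pFBI\circ \L\circ \pFBI^*$ into the factorization
\[
\L_{\cpt}=\pFBI^*\circ \widehat{\L}\circ M_{X_0}\circ \pFBI=\L\circ T_{X_0},\qquad T_{X_0}:=\pFBI^*\circ M_{X_0}\circ \pFBI,
\]
displaying $\L_{\cpt}$ as the composition of the transfer operator with a Toeplitz-type low-frequency cutoff $T_{X_0}$.

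The main step is that $T_{X_0}$ is a smoothing operator. Its Schwartz kernel is
\[
K_{T_{X_0}}(y,y')=\int \Phi_{x_\dag,\xi}(y)\,X_0(x_\dag,\xi)\,\overline{\Phi_{x_\dag,\xi}(y')}\,dx_\dag\,d\xi.
\]
The Gaussian factors in $\Phi_{x_\dag,\xi}$ give absolute convergence in $x_\dag$ uniformly in the remaining variables, and for each fixed $x_\dag$ the $\xi$-support of $X_0(x_\dag,\cdot)$ is the compact set $\{|\xi_0|^2+\|\xi-\xi_0\alpha_0(x_\dag)\|^2\le 4N^2\}$. Integration by parts in $(\xi_0,\xi_\dag)$ against the oscillatory phase $e^{i\xi_0(y_0-y_0')+i\xi_\dag\cdot(y_\dag-y_\dag')}$ then yields arbitrary polynomial decay of $K_{T_{X_0}}(y,y')$ and all its derivatives in $\|y-y'\|$. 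The only delicate point is that, because of $\Af$-invariance, $X_0$ is not compactly supported in $x_\dag$ and its $\xi$-derivatives carry $x_\dag$-dependence through the combination $\xi-\xi_0\alpha_0(x_\dag)$; these growth factors must be dominated by the Gaussian $x_\dag$-decay before the final $x_\dag$-integration, and this is where I expect the bulk of the technical bookkeeping to lie.

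Once the smoothing property of $T_{X_0}$ is in hand, the Schwartz kernel of $\L_{\cpt}$ is $g(x)\,K_{T_{X_0}}(F(x),y')$: a $C^\infty$ function supported in $\supp(g)\times \real^{2d+1}$ and Schwartz in $y'$. Pairing the second variable against $H^{-r}$-distributions yields, for every $R\ge 0$, a bounded operator $\L_{\cpt}:H^{-r}\to H^R(\supp g)$, where $H^R(\supp g)$ denotes the closure of $C^\infty(\supp g)$ in $H^R$. Combining with the inclusions $H^r_\aniso\hookrightarrow H^{-r}$ and $H^r(\supp g)\hookrightarrow H^r_\aniso$ from Corollary \ref{cor:sob} produces the factorization
\[
\L_{\cpt}:\; H^r_\aniso \hookrightarrow H^{-r}\xrightarrow{\L_{\cpt}} H^R(\supp g)\hookrightarrow H^r(\supp g)\hookrightarrow H^r_\aniso,
\]
establishing boundedness on $H^r_\aniso$. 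Choosing $R>r+(2d+1)$, Weyl's law for the Dirichlet Laplacian on a bounded neighborhood of $\supp g$ gives that the middle inclusion $H^R(\supp g)\hookrightarrow H^r(\supp g)$ has summable singular values (of order $k^{-(R-r)/(2d+1)}$) and hence is trace class. Since trace class is preserved under composition with bounded operators on either side, $\L_{\cpt}$ itself is trace class on $H^r_\aniso$, as required.
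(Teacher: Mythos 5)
Your proof is correct and takes essentially the same route as the paper: establish that $\L_{\cpt}$ is a smoothing operator with compactly supported (in the first variable) kernel, then factor through $H^r_\aniso \hookrightarrow H^{-r} \xrightarrow{\L_{\cpt}} H^R(\supp g) \hookrightarrow H^r(\supp g) \hookrightarrow H^r_\aniso$ and use that the middle Sobolev inclusion is trace class. You supply more detail than the paper's terse ``we can check'' (the factorization $\L_{\cpt}=\L\circ T_{X_0}$, the observation that the $x_\dag/2$ terms cancel in the phase, and Weyl's law in place of a textbook citation), but the structure is identical.
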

\begin{proof}
We can check that the operator $\L_{\mathrm{cpt}}$ is an integral operator with smooth kernel and maps the Sobolev space $H^r$ into $C^\infty(\supp\, g)$ continuously. 
Thus one may view  the operator $\L_{\mathrm{cpt}}:H^r_{\aniso}\to H^r_{\aniso}$ as the composition
\[
\begin{CD}
H^r_{\aniso}@>{\L_{\mathrm{cpt}}}>>  H^s(\supp\, g)@>{\iota}>> H^r(\supp\, g)@>{\iota}>> H^r_{\aniso}
\end{CD}
\]
where $s>r$ is an arbitrarily large number and $\iota$ denotes the injections. 
Since the injection $\iota:H^s(\supp\, g)\to H^r(\supp\, g)$ is a trace class operator if $s-r$ is sufficiently large\cite[Ch.10.2]{YoshidaBook} and since the composition of a trace class operator with a bounded operator is again a trace class operator, we obtain the lemma.
\end{proof}
The hyperbolic and central part will be considered in the following two sections.

\section{The hyperbolic part}\label{s:hyp}
In this section, we consider the hyperbolic part of the transfer operator. We will use the notation in the previous sections and set $X_\hyp=(1-X_0)\cdot (1-X_\ctr)$ for simplicity. 
From  Lemma \ref{lm:ker}, we see that the action of $\widehat{\L}$ is closely related to the pull-back operator by the natural action 
\[
\widetilde{F}:\real^{2d}\oplus \real^{2d+1}\to \real^{2d}\oplus \real^{2d+1},\quad \widetilde{F}(x_\dag,\xi)=(F_\dag(x_\dag),  {}^{t}DF_{(0,x_\dag)}^{-1}(\xi))
\]
of $F$ on $\real^{2d}\oplus \real^{2d+1}$ post-composed by the multiplication by $g(x_\dag)$. 
And, by the definition of the weight function $\weight^r_\aniso$ and hyperbolicity of $F$, we have 
\[
\weight^r_\aniso(\widetilde{F}^{-1}(x_\dag,\xi))\le C_0 \cdot \lambda^{-r}\cdot  \weight^r_\aniso(x_\dag,\xi)
\qquad \mbox{for $(x_\dag,\xi)\in \supp X_\hyp$}
\]
where $C_0>0$ is an absolute constant. (In fact, the weight function $\weight^r_\aniso$ is designed so that this inequality holds.) 
In view of these observations, the claim of the next proposition  should be a natural one. 
\begin{proposition}
\label{pp:hyp}
The hyperbolic part $\L_{\mathrm{hyp}}$ extends naturally to the bounded operator $\L_{\mathrm{hyp}}:H^r_{\aniso}\to H^r_{\aniso}$. 
Further the operator norm of the extension is bounded by $C_0\cdot \|g\|_\infty\cdot \lambda^{-r}$, where $C_0>0$ is a constant independent of $F$ and $g$. 
\end{proposition}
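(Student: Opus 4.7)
The plan is to reduce to a weighted $L^2$ estimate on the lift $\widehat{\L}_\hyp$ and apply the Schur test. Since $\pFBI$ embeds $H^r_\aniso$ isometrically into $L^2(\real^{2d}\oplus\real^{2d+1};\weight^r_\aniso)$ and $\L_\hyp=\pFBI^*\circ\widehat{\L}_\hyp\circ\pFBI$, it suffices to establish the operator-norm bound for $\widehat{\L}_\hyp$ on the weighted $L^2$ space. Conjugating by multiplication by $\weight^r_\aniso$, this reduces to an unweighted $L^2\to L^2$ estimate for the integral operator with kernel
\[
K_\hyp(x_\dag,\xi;z_\dag,\eta)=\frac{\weight^r_\aniso(x_\dag,\xi)}{\weight^r_\aniso(z_\dag,\eta)}\cdot K(x_\dag,\xi;z_\dag,\eta)\cdot X_\hyp(z_\dag,\eta),
\]
where $K$ is the kernel from \eqref{eqn:K}.

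Lemma \ref{lm:ker} concentrates $|K|$, up to rapid decay, on the set where $y\in\supp g$, $x_\dag\approx y_\dag$ at scale $\langle\xi_0\rangle^{-1/2}$, $z_\dag\approx F_\dag(y_\dag)$ at scale $\langle\eta_0\rangle^{-1/2}$, $\xi_0\approx\eta_0$ within $O(1)$, and $\xi\approx{}^tDF_y(\eta)$ at scale $\langle\|\eta\|\rangle^{1/2}$. Thus the effective support of $K$ at a given $(z_\dag,\eta)$ lies near $\widetilde{F}^{-1}(z_\dag,\eta)$ in the sense adapted to the partial FBI scales.

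The crucial step is the pointwise bound
\[
\frac{\weight^r_\aniso(x_\dag,\xi)}{\weight^r_\aniso(z_\dag,\eta)}\le C_0\,\lambda^{-r}
\]
on the intersection of this effective support with $\supp X_\hyp$. I plan to prove it by combining two ingredients. First, a quasi-invariance property of $\weight^r_\aniso$: displacements of $(x_\dag,\xi)$ by the FBI scales $\langle\xi_0\rangle^{-1/2}$, $\langle\|\xi\|\rangle^{1/2}$, and $O(1)$ in $x_\dag$, $\xi_\dag$ and $\xi_0$ respectively alter $\weight^r_\aniso$ by at most an absolute multiplicative constant, which can be verified by direct inspection of the explicit definition together with its $\Af$-invariance. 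Second, the hyperbolicity estimate $\weight^r_\aniso(\widetilde{F}^{-1}(z_\dag,\eta))\le C_0\lambda^{-r}\weight^r_\aniso(z_\dag,\eta)$ for $(z_\dag,\eta)\in\supp X_\hyp$ already flagged in the paragraph preceding the proposition; it follows because $\|\eta-\eta_0\alpha_0(z_\dag)\|\gtrsim\langle\eta_0\rangle^\tau$ on $\supp X_\hyp$, so the invariant cones $\cone^*_\pm$ are exchanged by $DF$ with expansion/contraction factor $\lambda$, and $W^{2r}_\aniso$ discriminates between these cones by the factor $\lambda^{-r}$.

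With the weight-ratio bound established on the effective kernel support, and with rapid decay (via $\rho$ large in Lemma \ref{lm:ker}) absorbing the at-most-polynomial growth of the weight ratio outside it, Schur's test reduces to the uniform bounds $\sup_{z,\eta}\int|K|\,dx_\dag\,d\xi$ and $\sup_{x,\xi}\int|K|\,dz_\dag\,d\eta\le C_0\|g\|_\infty$. Both are routine consequences of Lemma \ref{lm:ker}: the prefactor $\langle\xi_0\rangle^{d/2}\langle\eta_0\rangle^{d/2}$ is absorbed by integration of $\kappa^{-\rho}$ at its natural phase-space scales, while the remaining $y$-integration over $\supp g$ contributes $\|g\|_\infty$ times a volume factor. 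The main obstacle I anticipate is the rigorous verification of the weight-ratio estimate, particularly near the boundary of $\supp X_\hyp$ where $\|\xi_\dag-\xi_0\alpha_0(x_\dag)\|\sim\langle\xi_0\rangle^\tau$; stability of the cone condition $X_\hyp\equiv 1$ under the FBI-scale perturbation is ensured precisely because $\tau>1/2$ forces the tolerance $\langle\|\xi\|\rangle^{1/2}$ to be strictly smaller than $\langle\xi_0\rangle^\tau$ in the regime where $\xi_0$ dominates, which is exactly why $\tau$ was chosen slightly larger than $1/2$.
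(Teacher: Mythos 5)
Your strategy captures the right intuition (the weight is designed to gain a factor $\lambda^{-r}$ under the dynamics on $\supp X_\hyp$, and the kernel decays rapidly off the graph of $\widetilde F^{-1}$), and the paper explicitly flags the same heuristic in the paragraph preceding the proposition. However, the direct Schur-test route as you describe it has a genuine gap concerning the \emph{form of the constant}: the proposition asserts the bound $C_0\,\|g\|_\infty\,\lambda^{-r}$ with $C_0$ independent of $F$ and $g$, while the constant $C_\rho$ in Lemma~\ref{lm:ker} is \emph{not} of that form --- it depends on higher derivatives of both $F$ and $g$ (it arises from repeated integration by parts). If you estimate both Schur integrals directly from Lemma~\ref{lm:ker}, you obtain a bound of the shape $C(F,g)\,\lambda^{-r}$, and there is no mechanism in your argument to separate the $\|g\|_\infty$ contribution from the derivative-dependent constants. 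Your remark that the $y$-integration ``contributes $\|g\|_\infty$ times a volume factor'' is not supported by Lemma~\ref{lm:ker}, whose statement has already absorbed $g$ into $C_\rho$.

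The paper gets around exactly this by introducing the Littlewood-Paley partition $\{\Psi_m\}$ and splitting the proof of the Schur-type summation bound into two mechanisms: (i) for pairs $(m,m')$ with $m\ge m'+(1/2)\log_2\lambda-3$ it uses the \emph{exact} operator-norm bound $\|v_{m,m'}\|_{L^2}\le \|g\|_\infty\,\|u_{m'}\|_{L^2}$ (coming from $\|\L\|_{L^2\to L^2}\le\|g\|_\infty$ and the fact that $\pFBI,\pFBI^*$ are contractions), which is sharp and carries no $F$-dependence, and the factor $\lambda^{-r}$ then comes purely from the relation $2^{-2rm}\le C\lambda^{-r}2^{-2rm'}$ between the dyadic weights; (ii) for the remaining pairs it uses the $F,g$-dependent rapid decay of Lemma~\ref{lm:sep}, but there the free parameter $N$ (respectively the cut-off $N'$ on $|m'|$) is turned large so that the total contribution from this regime is dominated by $C_0\|g\|_\infty\lambda^{-r}$ regardless of the size of $C_\nu(F,g)$. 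Your proposal never invokes the parameter $N$, which is the lever that makes the $F,g$-dependent constants harmless. Without it, or something like it, the direct Schur computation cannot yield a constant of the required form.

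A secondary issue: the weight-ratio estimate
$\weight^r_\aniso(x_\dag,\xi)/\weight^r_\aniso(z_\dag,\eta)\le C_0\lambda^{-r}$
on the effective support is plausible but nontrivial --- the gain is $\lambda^{-r}$ rather than $\lambda^{-2r}$ precisely because of the $\langle\|\xi\|\rangle^{1/2}$ normalization inside the weight, and tracking this carefully is what Lemma~\ref{lm:le} and Corollary~\ref{cor:k} do. Deferring it is acceptable, but ``direct inspection'' undersells the work; the paper's proof of Lemma~\ref{lm:le} involves a careful case analysis in the cone geometry and uses the structural facts (\ref{eqn:basic}) about contact diffeomorphisms, which is where the half-power of $\lambda$ ultimately emerges.
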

We give an elementary proof for this proposition in the following subsections. 
But, since the proposition is intuitively rather obvious as we observed above and since this is not a main point of our argument, one may skip the proof below and proceed to the next section where we treat the central part.

\subsection{A Littlewood-Paley type partition of unity}
To begin with, we  introduce a partition of unity on $\real^{2d}\oplus \real^{2d+1}$  and then define a norm on $H^r_\aniso$, which is equivalent to the original norm $\|\cdot\|_r$ but more tractable for our purpose. 

First we consider  a simple partition of unity $\{\chi_n:\real\to [0,1]\mid  n=0,1,2,\dots\}$ on the real line $\real$  
defined by
\[
\chi_n(s)=\begin{cases}
\chi(|s|),&\mbox{ if $n=0$;}\\
\chi(2^{-n}|s|)-\chi(2^{-n+1}|s|),&\mbox{ if $n\ge 1$,}
\end{cases}
\]
where $\chi$ is the function  introduced in the beginning of Section \ref{s:pFBI}.
Using this partition of unity and recalling the functions $\psi_{\pm}:\mathbf{P}\real^{2d}\to [0,1]$ used in the definition of~$\weight^r_\aniso$, we define the $C^\infty$ partition of unity   
 $\{\psi_m:\real^{2d}\to [0,1]\}_{m\in \integer}$ on $\real^{2d}$ by
\[
\psi_m(\xi_\dag)=
\begin{cases}
\chi_{m}(\|\xi_\dag\|)\cdot \psi_+([\xi_\dag]), &\mbox{if $m>0$;}\\
\chi_0(\|\xi_\dag\|), &\mbox{if $m=0$;}\\
\chi_{|m|}(\|\xi_\dag\|)\cdot \psi_-([\xi_\dag]), &\mbox{if $m<0$.}
\end{cases}
\]
By this definition, there exists a constant $C_0>1$ such that 
\[
 C_0^{-1}\sum_{m\in \integer} 2^{-2rm} \cdot {\psi}_{m}(\xi_\dag)^2\le 
 W^r_{\aniso}(\xi_\dag)^{2}
  \le C_0\sum_{m\in \integer} 2^{-2rm} \cdot {\psi}_{m}(\xi_\dag)^2\quad \mbox{ for $\xi_\dag\in \real^{2d}$.}
\]
Finally we define the $C^\infty$ partition of unity $\{\Psi_m:\real^{2d}\oplus \real^{2d+1}\to [0,1]\}_{m\in \integer}$ on $\real^{2d}\oplus \real^{2d+1}$
as follows. In the case $x_\dag=0$, we set
\[
\Psi_{m}(0, \xi)=
 \psi_{m}\left(
 \frac{\|\xi_\dag\|}{\langle \|\xi\|\rangle^{1/2}}\right)\quad \mbox{where $\xi=(\xi_0, \xi_\dag)$,}
 \]
 and then extend this definition to the case $x\neq 0$ uniquely so that it is invariant with respect to the natural action of the transformation group $\Af$. That is to say, we set
\[
\Psi_{m}(x_\dag, \xi)=
 \psi_{m}\left(
 \frac{\|\xi-\xi_0 \cdot \alpha_0(x_\dag)\|}{\langle (|\xi_0|^2+\|\xi-\xi_0 \cdot \alpha_0(x_\dag)\|^2)^{1/2}\rangle^{1/2}}\right).
 \]
The estimate on the function $W^r_\aniso$ above implies that 
\[
 C_0^{-1}\cdot \sum_{m\in \integer} 2^{-4rm} \cdot {\Psi}_{m}(x_\dag,\xi)^2\le 
 \weight^r_{\aniso}(x_\dag,\xi)^{2}
 \le C_0\cdot \sum_{m\in \integer} 2^{-4rm} \cdot {\Psi}_{m}(x_\dag,\xi)^2
\]
for $(x_\dag,\xi)\in \real^{2d}\oplus \real^{2d+1}$. Consequently the anisotropic Sobolev norm $\|\cdot\|_r$ satisfies
\[
C_0^{-1}\cdot { \sum_{m\in\integer} 2^{-4rm}\cdot  \| \Psi_{m}\cdot \pFBI u\|_{L^2}^2}\le 
{\|u\|_r^2}\le C_0\cdot  { \sum_{m\in\integer} 2^{-4rm}\cdot  \| \Psi_{m}\cdot \pFBI u\|_{L^2}^2}.
\]
Therefore, in order to prove Proposition \ref{pp:hyp}, it is enough to prove the same claim with the norm $\|\cdot\|_r$ replaced by the new norm
\[
\|u\|'_r=\left({ \sum_{m\in\integer} 2^{-4rm}\cdot  \| \Psi_{m}\cdot \pFBI u\|_{L^2}^2}\right)^{1/2}.
\]

\subsection{The proof of Proposition \ref{pp:hyp}}
Below we give the proof of Proposition \ref{pp:hyp}  assuming a lemma (Lemma \ref{lm:sep}) whose proof is postponed until the next subsection. 
Take a function $u\in \Sch(\real^{2d+1})$ arbitrarily and set
\[
v=\L_\hyp u,\qquad u_{m}=\Psi_{m}\cdot \pFBI u, \qquad v_{m}=\Psi_{m}\cdot \pFBI v=
\Psi_{m}\cdot \widehat{\L}_\hyp\circ  \pFBI u.
\]
As we noted at the end of the last subsection, it is enough to show the claim
\begin{equation}\label{eqn:cka}
\sum_{m\in \integer} 2^{-4rm}\cdot  \| v_{m}\|_{L^2}^2
\le C_0 \cdot \|g\|_{\infty}^2\cdot \lambda^{-2r}\cdot \sum_{m\in \integer} 2^{-4rm}\cdot  \| u_{m}\|_{L^2}^2.
\end{equation}
To proceed, we decompose $v_{m}$ into countably many pieces
\[
v_{m,m'}=\Psi_{m}\cdot \widehat{\L}_\hyp(u_{m'}),\qquad m'\in \integer.
\]
The claim (\ref{eqn:cka}) is a consequence of the following three facts on the relation between the $L^2$ norms of $v_{m,m'}$ and $u_{m'}$. The first is the fact that
\[
\|v_{m,m'}\|_{L^2}\le \|g\|_{L^\infty}\cdot \|u_{m'}\|_{L^2}
\]
for any $m,m'\in \integer$. 
This follows from the facts that the operators $\pFBI$ and $\pFBI^*$ do not increase the $L^2$ norm and that
the operator norm of $\L$ with respect to the $L^2$ norm is bounded by $\|g\|_{L^\infty}$. The second is that, for any given $N'>0$, we can take the constant $N>0$ in the definition of $X_0$ so that $v_{m,m'}=0$ whenever $|m'|\le N'$. 
In fact, if we take sufficiently large $N>0$ according to $N'$, 
the condition $|m'|\le N'$ implies that $\supp X_{\hyp}\cap \supp \Psi_{m'}=\emptyset$ and hence
that  $\widehat{\L}_\hyp(u_{m'})=0$. 
The third is stated in the following lemma:
\begin{lemma}\label{lm:sep}
For any $\nu>0$, there exists a constant $C_\nu>0$, which may depend on $F$ and $g$, such that, if 
\begin{equation}\label{eqn:nc}
m< m'+(1/2)\log_2 \lambda-3
\end{equation}
then 
\[
\| v_{m,m'}\|_{L^2}\le C_\nu \cdot 2^{-\nu\cdot \max\{m,m'\} }\|u_{m'}\|_{L^2}.
\]
\end{lemma}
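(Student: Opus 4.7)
The plan is to estimate the kernel of
\[
\mathcal{A}_{m,m'}:=\Psi_{m}\cdot\widehat{\L}_{\hyp}\cdot\Psi_{m'}
\]
pointwise by means of Lemma \ref{lm:ker} and then apply the Schur test. The decay factor $2^{-\nu\max\{m,m'\}}$ is to be extracted from the last factor of $\kappa$, namely $\langle\|\xi-{}^tDF_y\eta\|/\langle\|\eta\|\rangle^{1/2}\rangle^{-\rho}$. The other three factors of $\kappa$ force $|\xi_0-\eta_0|\lesssim 1$, $\|x_\dag-y_\dag\|\lesssim\langle\xi_0\rangle^{-1/2}$, $\|F_\dag(y_\dag)-z_\dag\|\lesssim\langle\eta_0\rangle^{-1/2}$, and the integrations in $y$, $x_\dag$, $z_\dag$ at these scales exactly compensate the prefactor $\langle\xi_0\rangle^{d/2}\langle\eta_0\rangle^{d/2}$ in Lemma \ref{lm:ker}.

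First I would rewrite the near-equality $\xi\approx{}^tDF_y\eta$ in a form adapted to the contact structure. Using $F^*\alpha_0=\alpha_0$, equivalently ${}^tDF_y(\alpha_0(F(y)))=\alpha_0(y)$, together with Proposition \ref{pp:basic} to control $\alpha_0(x_\dag)-\alpha_0(y_\dag)$ and $\alpha_0(F(y))-\alpha_0(z_\dag)$ at the scales just listed, this decouples (up to errors absorbable into additional decay) into
\[
\xi_0\approx\eta_0 \qquad\text{and}\qquad \xi^*\approx{}^tDF_y(\eta^*),
\]
with $\xi^*:=\xi-\xi_0\alpha_0(x_\dag)$ and $\eta^*:=\eta-\eta_0\alpha_0(z_\dag)$.

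Next I would invoke hyperbolicity. A short symplectic-duality argument shows that if $F$ is $\lambda$-hyperbolic then the cotangent map ${}^tDF_y$ sends $\cone_+^*(1/10)$ strictly into itself with $\|{}^tDF_y v\|\ge c\lambda\|v\|$ for $v\in\cone_+^*(1/3)$, and $({}^tDF_y)^{-1}$ does the symmetric thing on $\cone_-^*$. On $\supp \Psi_{m'}$ with $m'\ne 0$ the transversal covector $\eta^*$ lies in the appropriate cone and has magnitude $\|\eta^*\|\approx 2^{|m'|}\langle\|\eta\|\rangle^{1/2}$, and similarly $\|\xi^*\|\approx 2^{|m|}\langle\|\xi\|\rangle^{1/2}$ on $\supp \Psi_{m}$. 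The trivial bound $\|\xi\|\le C\lambda\|\eta\|$ (from $\xi_0\approx\eta_0$ and $\|\xi^*\|\lesssim\lambda\|\eta^*\|$) together with the expansion $\|\xi^*\|\approx\|{}^tDF_y\eta^*\|\gtrsim\lambda\|\eta^*\|$ forces the necessary scale-matching
\[
2^{|m|}\;\gtrsim\; c\lambda^{1/2}\cdot 2^{|m'|}.
\]
The hypothesis $m<m'+(1/2)\log_2\lambda-3$ violates this by at least a factor of $2^{3}$, so a direct computation yields
\[
\|\xi-{}^tDF_y\eta\|/\langle\|\eta\|\rangle^{1/2}\;\gtrsim\;\lambda^{1/2}\cdot 2^{\max\{m,m'\}}
\]
on the joint support. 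Choosing $\rho$ in Lemma \ref{lm:ker} sufficiently large (say $\rho>\nu+4d+2$) and applying the Schur test then yields the bound stated in the lemma.

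The main obstacle is the clean bookkeeping of scales across the several regimes where the sizes of $|\xi_0|$ and $\|\xi^*\|$ (and likewise for $\eta$) vary: in the regime where $\|\xi^*\|$ dominates $|\xi_0|$ one has $\langle\|\xi\|\rangle^{1/2}\approx\|\xi^*\|^{1/2}$, so the support condition for $\Psi_m$ reads $\|\xi^*\|\approx 2^{2|m|}$, which is precisely why the natural threshold is only $(1/2)\log_2\lambda$: the quadratic scaling of $\langle\|\xi\|\rangle^{1/2}$ in $\|\xi^*\|$ absorbs half of the hyperbolic expansion. The cases in which $m$ and $m'$ have different signs (so $\xi^*$ and $\eta^*$ lie in different cones $\cone_{\pm}^*$) are easier, since then $\xi^*$ and ${}^tDF_y(\eta^*)$ are already transverse by the cone conditions themselves, and the lower bound on $\|\xi-{}^tDF_y\eta\|$ follows from the magnitude of the larger of the two without any numerical comparison of $|m|$ and $|m'|$.
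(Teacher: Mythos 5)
Your plan reproduces the paper's: bound the kernel of $\Psi_{m}\cdot\widehat{\L}_{\hyp}\cdot\Psi_{m'}$ via Lemma \ref{lm:ker}, show the fourth factor of $\kappa$ is uniformly large on the joint support by exploiting the hyperbolicity of ${}^tDF_y$ on the rescaled transversal covectors (the content of Lemma \ref{lm:le} and Corollary \ref{cor:k}), then finish with the Schur test; and your observation that the normalization by $\langle\|\xi\|\rangle^{1/2}$ halves the hyperbolic exponent, giving the threshold $(1/2)\log_2\lambda$, is exactly the paper's key mechanism. Two specific steps, however, are misstated and would fail as written.

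First, the Schur integrals do \emph{not} ``exactly compensate'' the prefactor: $\int_{\supp\Psi_m}d\xi_\dag$ alone contributes a factor of order $2^{4d|m|}\langle\xi_0\rangle^{d}$, and, more seriously, the $\eta_0$-integral diverges unless one invokes the hyperbolic cutoff $X_{\hyp}$ sitting inside $\widehat{\L}_{\hyp}$, which together with $\supp\Psi_{m'}$ forces $|\eta_0|\le 2^{(|m'|+4)/(\tau-1/2)}$; your proposal never mentions $X_{\hyp}$, yet without it the Schur test gives nothing. The Schur integrals therefore grow polynomially in $2^{\max\{|m|,|m'|\}}$, and the decay $\kappa^{-\rho}$ must overcome that growth (it does for $\rho$ large, as in the Claim of Subsection 7.4, but this is a genuine trade-off, not cancellation). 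Second, the case $m'<0<m$ is \emph{not} settled by cone-transversality: $\supp\psi_-$ covers the whole complement of $\cone^*_+(1/3)$, including covectors outside $\cone^*_-(1/10)$, and for such $\eta^*$ the map ${}^tDF_y$ sends it \emph{into} $\cone^*_+(1/10)$, i.e.\ the same cone as $\xi^*$, so a quantitative magnitude comparison is unavoidable. The paper's Lemma \ref{lm:le} treats every sign pattern uniformly by showing that ${}^tDF_y(\eta')$, after rescaling, is either small or lies in $\cone^*_+(1/10)$ with magnitude at least of order $\lambda^{1/2}2^{m'}$, and then comparing magnitudes in all regimes. (Also, your asserted lower bound $\lambda^{1/2}\cdot 2^{\max\{m,m'\}}$ for the fourth factor of $\kappa$ is too strong near the boundary of (\ref{eqn:nc}); the paper proves only $\kappa\ge c\cdot 2^{\max\{|m|,|m'|\}}$, which is what survives when the two scales nearly overlap, and this suffices once raised to a large power $\rho$.)
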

As we will see later, the assumption (\ref{eqn:nc})  is a sufficient condition for the image of 
$\supp\, \Psi_{m'}$ by $\widetilde{F}^{-1}$ not to intersect $\supp\, \Psi_m$ and therefore the claim of Lemma \ref{lm:sep} is quite natural. 
But, unfortunately, our proof of Lemma \ref{lm:sep} is not very short. So we will give it in the next subsection and below we finish the proof of Proposition~\ref{pp:hyp} assuming Lemma \ref{lm:sep}.

Take and fix $\nu$ such that $\nu>2r$. Also take large $N'>0$ and set
\[
\gamma_{m,m'}=\begin{cases}
0&\mbox{if $|m'|\le N'$;}\\
2^{2r(m'-m)}\cdot \|g\|_\infty&\mbox{if $|m'|\ge N'$ and $m\ge   m'+(1/2)\log_2\lambda-3$;}\\
C_\nu\cdot  2^{-(\nu-4r)\cdot \max\{m,m'\}}&\mbox{if $|m'|\ge N'$ and $m< m'+(1/2)\log_2\lambda-3$.}
\end{cases}
\]
We may and do take large $N'>0$ so that 
\begin{equation}\label{eqn:su}
\sum_{m\in \integer}\gamma_{m,m'}<C_0 \cdot \|g\|_\infty \cdot \lambda^{-r},\qquad 
\sum_{m'\in \integer}\gamma_{m,m'}<C_0 \cdot \|g\|_\infty \cdot \lambda^{-r}
\end{equation}
where $C_0$ is a constant independent of $F$ and $g$. (Take large $N'>0$ according to $F$ and $g$ so that $C_\nu\cdot 2^{-(\nu-r) N'}$ is small.)
From Lemma \ref{lm:sep} and the two facts stated in the paragraph preceding it, we have also
\begin{equation}\label{eqn:rm}
2^{-2rm}\|v_{m,m'}\|_{L^2}\le \gamma_{m,m'}\cdot 2^{-2rm'} \|u_{m'}\|_{L^2}\quad 
\mbox{for any $(m,m')\in \integer\oplus \integer$,}
\end{equation}
provided that we take sufficiently large $N$ according to $N'$. 
Now,  by using  Schwarz lemma, we obtain the required estimate (\ref{eqn:cka}):
\begin{align*}
\sum_{m\in\integer} 2^{-4rm}\cdot  \| v_{m}\|_{L^2}^2&=
\sum_{m\in\integer} 2^{-4rm}\cdot  \left\| \sum_{m'\in\integer}  v_{m,m'}\right\|_{L^2}^2\\
&\le 
\sum_{m\in\integer}\left( 2^{-4rm}\cdot  \left(\sum_{m'\in\integer} \gamma_{m,m'}\right)\cdot \left( \sum_{m'\in\integer} \gamma_{m,m'}^{-1} \left\|  v_{m,m'}\right\|_{L^2}^2\right)\right)\\
&\le \left(C_0 \|g\|_\infty \cdot \lambda^{-r}\right)\cdot \sum_{m\in\integer}\sum_{m'\in\integer}
2^{-4rm}\gamma_{m,m'}^{-1}\cdot \|v_{m,m'}\|_{L^2}^2\quad \mbox{by (\ref{eqn:su})}\\
&\le \left(C_0  \|g\|_\infty \cdot \lambda^{-r}\right)\cdot \sum_{m\in\integer}\sum_{m'\in\integer}
2^{-4rm'}\gamma_{m,m'}\cdot \|u_{m'}\|_{L^2}^2\quad \mbox{by (\ref{eqn:rm})}\\
&\le \left(C_0 \|g\|_\infty \cdot \lambda^{-r}\right)^2\cdot \sum_{m'\in\integer}
2^{-4rm'} \|u_{m'}\|_{L^2}^2\quad \mbox{by (\ref{eqn:su}).}
\end{align*}
This completes the proof of Proposition \ref{pp:hyp}.

\subsection{Consequences of the condition (\ref{eqn:nc})}
Before we go into the proof of Lemma \ref{lm:sep}, we give consequences of the assumption (\ref{eqn:nc}) in the lemma. 
\begin{lemma}\label{lm:le}
There exist a small constant $c>0$ and a large constant $C>0$, which depend on $F$, such that,
if $m$ and $m'$ satisfy  (\ref{eqn:nc}) and $|m'|>C$, and if
\[
y=(y_0,y_\dag)\in \supp\, g,\quad  (y_\dag, \xi')\in \supp\, \Psi_{m},\quad\mbox{and}\quad (F_\dag(y_\dag), \eta')\in \supp\, \Psi_{m'},
\]
it holds
\begin{equation}\label{eqn:le}
\|\xi'-{}^tDF_{y}(\eta')\|\ge c\cdot 2^{\max\{|m|, |m'|\}}\cdot\max\{\langle \|\xi'\|\rangle^{1/2}, \langle \|\eta'\|\rangle^{1/2}\}.
\end{equation}
\end{lemma}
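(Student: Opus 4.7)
The plan is to combine the decomposition of $\xi'-{}^tDF_y(\eta')$ arising from the contact structure with the dyadic-cone structure imposed by $\Psi_m$ and $\Psi_{m'}$, and then exploit the hyperbolicity of the cotangent map ${}^tDF_\dag$ in the transversal directions.

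First I set up a decomposition. Since $F$ preserves $\alpha_0$ we have ${}^tDF_y(\alpha_0(F(y)))=\alpha_0(y)$, and since $DF$ fixes $\partial_{x_0}$ the map ${}^tDF_y$ preserves the first coordinate of a covector. Writing $\xi'=\xi'_0\,\alpha_0(y)+\xi'^*$ and $\eta'=\eta'_0\,\alpha_0(F(y))+\eta'^*$ with transversal parts $\xi'^*,\eta'^*\in\{0\}\oplus\real^{2d}$, a short computation using $F(x_0,x_\dag)=(x_0+f(x_\dag),F_\dag(x_\dag))$ yields
\[
\xi'-{}^tDF_y(\eta')=(\xi'_0-\eta'_0)\,\alpha_0(y)+\bigl(0,\;\xi'^*_\dag-{}^tDF_\dag(y_\dag)\,\eta'^*_\dag\bigr).
\]
Because $\alpha_0(y)$ has bounded norm and non-zero $0$-component throughout $\supp g$, it follows that
\[
\|\xi'-{}^tDF_y(\eta')\|\ge c_0\max\bigl\{|\xi'_0-\eta'_0|,\;\|\xi'^*_\dag-{}^tDF_\dag(y_\dag)\,\eta'^*_\dag\|\bigr\},
\]
so it is enough to bound one of the two terms on the right by the claimed quantity.

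From the construction of $\Psi_m$, the support condition $(y_\dag,\xi')\in\supp\Psi_m$ with $|m|$ large gives $\xi'^*_\dag\in\cone^*_{\mathrm{sgn}(m)}(1/3)$ and $\|\xi'^*_\dag\|\asymp 2^{|m|}\langle\|\xi'\|\rangle^{1/2}$; in particular $\langle\|\xi'\|\rangle\gtrsim 2^{2|m|}$, and analogously for $(\eta',m')$. The hyperbolicity of $F$, together with Proposition~\ref{pp:basic} (which controls the cross term $df$), transfers to: for $\lambda$ large enough, ${}^tDF_\dag(y_\dag)$ maps $\real^{2d}\setminus\cone^*_-(1/10)$ into $\cone^*_+(1/10)$ expanding norms by a factor $\ge c\lambda$, and ${}^tDF_\dag^{-1}(y_\dag)$ maps $\real^{2d}\setminus\cone^*_+(1/10)$ into $\cone^*_-(1/10)$ expanding by $\ge c\lambda$; dually ${}^tDF_\dag$ contracts $\cone^*_-(1/10)$ by a factor $\le C/\lambda$.

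The core argument is then a case analysis in the sign of $m'$. When $m'>0$, the vector ${}^tDF_\dag(y_\dag)\,\eta'^*_\dag$ lies in the narrow cone $\cone^*_+(1/10)$ with norm $\ge c\lambda\cdot 2^{|m'|}\langle\|\eta'\|\rangle^{1/2}$, whereas the hypothesis $m<m'+\tfrac{1}{2}\log_2\lambda-3$ makes $\|\xi'^*_\dag\|\lesssim 2^{|m|}\langle\|\xi'\|\rangle^{1/2}$ much smaller (provided $\langle\|\xi'\|\rangle\lesssim\lambda^2\langle\|\eta'\|\rangle$); if in addition $m\le 0$ the two vectors sit in disjoint cones, so their difference is comparable to the larger one. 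When $m'<0$, the assumption $|m'|>C$ together with (\ref{eqn:nc}) forces $m<0$ as well, so $\xi'^*_\dag,\eta'^*_\dag\in\cone^*_-(1/3)$; here ${}^tDF_\dag\eta'^*_\dag$ is contracted and (\ref{eqn:nc}) gives $\|{}^tDF_\dag\eta'^*_\dag\|\ll\|\xi'^*_\dag\|$, so the transversal difference is of order $\|\xi'^*_\dag\|\sim 2^{|m|}\langle\|\xi'\|\rangle^{1/2}$. The only remaining regime is that of a strong scale imbalance $\langle\|\xi'\|\rangle\gg\lambda^2\langle\|\eta'\|\rangle$ or vice versa; in this case $|\xi'_0-\eta'_0|$ must itself be of order $\max\{\langle\|\xi'\|\rangle,\langle\|\eta'\|\rangle\}$, which beats the right-hand side of (\ref{eqn:le}) because $\langle\|\xi'\|\rangle^{1/2}\ge c\,2^{|m|}$ and $\langle\|\eta'\|\rangle^{1/2}\ge c\,2^{|m'|}$.

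The main obstacle is the bookkeeping in this last imbalance case: one has to argue that once the two scales are far apart, the transversal parts cannot account for the discrepancy, so the $0$-components are forced to differ by the full amount. Once this is handled, the individual cases combine to give the inequality (\ref{eqn:le}) with a constant $c$ depending only on $F$ and the geometry of the cones $\cone^*_\pm(1/3)$.
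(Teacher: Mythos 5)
Your overall scheme matches the paper's: both reduce to analyzing the transversal component $\xi'^*_\dag-{}^t(DF_\dag)_{y_\dag}\eta'^*_\dag$ (the paper does this by reducing to $y=F(y)=0$ via an affine change in $\Af$ and then working with $\tilde\eta={}^tDF_0\eta'$; you keep $y$ general and use the identity $\xi'-{}^tDF_y\eta'=(\xi'_0-\eta'_0)\alpha_0(y)+(0,\xi'^*_\dag-{}^tDF_\dag\eta'^*_\dag)$, which is correct and a bit cleaner), you then run the same case split on the sign of $m'$ using the cone/dyadic structure of $\psi_m$, and you close the scale-imbalance case by the same crude $|\xi'_0-\eta'_0|$ estimate that the paper uses at the end of its proof. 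So the route is essentially the same.

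There is, however, a genuine gap in your treatment of the case $m'<0$. You assert that $\eta'^*_\dag\in\cone^*_-(1/3)$ and that "${}^tDF_\dag\eta'^*_\dag$ is contracted." Neither is quite right. First, $\supp\psi_{m'}$ for $m'<0$ is contained in $\supp\psi_-$, which is the complement of the \emph{interior} of $\cone^*_+(1/3)$, not the cone $\cone^*_-(1/3)$; so $\eta'^*_\dag$ may well lie in the intermediate region between $\cone^*_-(1/3)$ and $\cone^*_+(1/3)$. More importantly, even for $\eta'^*_\dag$ in $\cone^*_-(1/10)$, the map ${}^t(DF_\dag)_{y_\dag}$ need not contract it: the hyperbolicity conditions only say that if the \emph{image} ${}^tDF_\dag\eta'^*_\dag$ is not in $\cone^*_+(1/10)$ then it is contracted by a factor $\lambda^{-1}$; nothing forbids ${}^tDF_\dag\eta'^*_\dag$ from falling into $\cone^*_+(1/10)$ and actually being \emph{expanded} (a diagonal $B=\mathrm{diag}(\lambda,\lambda^{-1})$ with $\eta'^*_\dag$ near the boundary of $\cone^*_-(1/10)$ does exactly this). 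The paper's proof explicitly allows for this: it shows that $\tilde\eta_\dag/\langle\|\tilde\eta\|\rangle^{1/2}$ lies either in a small ball of radius $\lambda^{-1/2}2^{m'+1}$ \emph{or} in $\cone^*_+(1/10)$, and in the second alternative one concludes by cone separation from $\xi'^*_\dag$ exactly as in your $m'>0$, $m\le 0$ branch. You need to add that alternative to complete the $m'<0$ case; the tools you have already set up suffice, but as written the claim "${}^tDF_\dag\eta'^*_\dag$ is contracted" is false and the argument does not cover all configurations allowed by the support conditions.

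A smaller quibble: in the $m'>0$ sub-case your threshold for deferring to the scale-imbalance regime is stated as $\langle\|\xi'\|\rangle\lesssim\lambda^2\langle\|\eta'\|\rangle$, but tracking the factors through $2^m<2^{m'}\lambda^{1/2}/8$ and $\|{}^tDF_\dag\eta'^*_\dag\|\gtrsim\lambda\,2^{m'}\langle\|\eta'\|\rangle^{1/2}$ shows the norm-comparison argument only closes for $\langle\|\xi'\|\rangle\lesssim\lambda\langle\|\eta'\|\rangle$; the fallback argument still works with the smaller threshold, so this is just a constant to adjust, not a structural problem.
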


\begin{proof}
Since the function $ \Psi_{m}$ is invariant with respect to the natural action of $\Af$, 
we may and do assume $y=F(y)=0$, changing coordinates by affine transformations in $\Af$.
Put $\tilde{\eta}={}^tDF_0(\eta')$ and write $\xi'$, $\eta'$ and $\tilde{\eta}$ as
\[
\xi'=(\xi'_0, \xi'_\dag), \quad \eta'=(\eta'_0,\eta'_\dag), \quad \tilde{\eta}=(\tilde{\eta}_0, \tilde{\eta}_\dag) \in \real\oplus \real^{2d}
\]
Recall that $F$ is written in the form (\ref{Fh}) in a neighborhood of $0$ and the function $f$ in it satisfies 
(\ref{eqn:basic}). In particular,  
we have $\tilde{\eta}_\dag={}^t(DF_\dag)_0\eta'_\dag$ and $\tilde{\eta}_0=\eta_0$. 
From the assumption $(y_\dag, \xi')\in \supp\, \Psi_{m}$ and $(F_\dag(y_\dag), \eta')\in \supp\, \Psi_{m'}$, we have
\begin{equation}\label{lmass}
\frac{\xi'_\dag}{\langle \|\xi'\|\rangle^{1/2}}\in \supp\,  \psi_m, \qquad \frac{\eta'_\dag}{\langle \|\eta'\|\rangle^{1/2}}\in \supp\, \psi_{m'}.
\end{equation}

To proceed, we consider the position of the point $\tilde{\eta}_\dag/\langle \tilde{\eta}\rangle^{1/2}$ in $\real^{2d}$. 
We  consider the cases $m'<0$ and $m'>0$ separately. 
In the case $m'<0$, we  claim
\[
\frac{\tilde{\eta}_\dag}{\langle \|\tilde{\eta}\|\rangle^{1/2}}\in \{ z\mid \|z\|\le \lambda^{-1/2} \cdot 2^{m'+1}\}\cup \cone^*_+(1/10).
\]
To prove this claim, we suppose $\tilde{\eta}_\dag \in \real^{2d}\setminus \cone^*_+(1/10)$ and show
\[
\frac{\|\tilde{\eta}_\dag\|}{\langle \|\tilde{\eta}\|\rangle^{1/2}}\le \lambda^{-1/2}\cdot 2^{m'+1}.
\]
By $\lambda$-hyperbolicity of $F$, we have $
\|\tilde{\eta}_\dag\|\le \lambda^{-1}\|\eta'_\dag\|$. 
Since $\tilde{\eta}_0=\eta'_0$, we also have
\[
\frac{\|\tilde{\eta}_\dag\|}{\|\eta'_\dag\|
}\le \frac{\langle\|\tilde{\eta}\|\rangle}{\langle \|\eta'\|\rangle}\le 1.
\]
Combining these two inequalities, we get
\[
\frac{\|\tilde{\eta}_\dag\|}{\langle \|\tilde{\eta}\|\rangle^{1/2}}=
\|\tilde{\eta}_\dag\|^{1/2}\cdot \frac{\|\tilde{\eta}_\dag\|^{1/2}}{\langle \|\tilde{\eta}\|\rangle^{1/2}}
\le 
\lambda^{-1/2}
\|\eta'_\dag\|^{1/2}\cdot \frac{\|\eta'_\dag\|^{1/2}}{\langle \|\eta'\|\rangle^{1/2}}
\le 
\lambda^{-1/2}\cdot  \frac{\|\eta'_\dag\|}{\langle \|\eta'\|\rangle^{1/2}}.
\]
This together with the latter condition in (\ref{lmass}) implies the required estimate. 
In the case $m'>0$, a similar argument yields 
\[
\frac{\tilde{\eta}_\dag}{\langle \|\tilde{\eta}\|\rangle^{1/2}}\in \{ z\mid \|z\|\ge \lambda^{1/2} \cdot 2^{m'-1}\}\cap \cone^*_+(1/10).
\]

Compare the claims on the position of $\tilde{\eta}_\dag/\langle \tilde{\eta}\rangle$ with 
the first condition in (\ref{lmass}) and recall the assumption (\ref{eqn:nc}). Then we see that there exists a small constant $c>0$ and a large constant $C>0$, which may depend on $F$, such that, if $m$ and $m'$ satisfy (\ref{eqn:nc}) and $|m'|>C$, we have
\[
\left|\frac{\xi'_\dag}{\langle \|\xi'\|\rangle^{1/2}}-\mu \cdot \frac{\tilde{\eta}_\dag}{\langle \|\tilde{\eta}\|\rangle^{1/2}}\right|\ge c \cdot 2^{\max\{|m|, |m'|\}}\quad \mbox{for any $1/2\le \mu\le 2$. }
\]

If  $1/2\le \langle \|\xi'\|\rangle/\langle \|\tilde{\eta}\|\rangle \le 2$,  the last estimate implies
\[
\|\xi'-\tilde{\eta}\|\ge \|\xi'_\dag-\tilde{\eta}_\dag\|\ge (c/2) \cdot 2^{\max\{|m|, |m'|\}}\cdot \max\{\langle \|\xi'\|\rangle^{1/2},\langle \|\tilde{\eta}\|\rangle^{1/2}\}
\]
and  hence the inequality (\ref{eqn:le}) holds with a possibly different constant $c>0$.  (Note that the ratio between $\|\tilde{\eta}\|$ and $\|\eta'\|$ is bounded by a constant that depends on $F$.)

In the remaining case where either $ \langle \|\xi'\|\rangle/\langle\| \tilde{\eta}\|\rangle<1/2$ or $\langle \|\xi'\|\rangle/\langle\| \tilde{\eta}\|\rangle > 2$ holds,  we can prove the inequality (\ref{eqn:le}) by a crude estimate as follows. Clearly we have
\[
\|\xi'-\tilde{\eta}\|\ge |\langle \|\xi'\|\rangle-\langle\|\tilde{\eta}\|\rangle |\ge  \frac{1}{2} \cdot \max\{ \langle \|\xi'\|\rangle, \langle\|\tilde{\eta}\|\rangle\}
\]
in this case. Since (\ref{lmass}) implies
\[
\langle \|\xi'\|\rangle^{1/2}\ge \frac{\|\xi'_\dag\|}{\langle \|\xi'\|\rangle^{1/2}}\ge  2^{|m|-1},\qquad 
\langle \|\eta'\|\rangle^{1/2}\ge \frac{\|{\eta'}_\dag\|}{\langle \|\eta'\|\rangle^{1/2}}\ge  2^{|m'|-1},
\]
we have also
\[
\max\{ \langle \|\xi'\|\rangle^{1/2}, \langle\|\tilde{\eta}\|\rangle^{1/2}\}\ge \frac{1}{2}\cdot 2^{\max\{|m|, |m'|\}}.
\]
Therefore the inequality (\ref{eqn:le}) holds for a sufficiently small constant $c$. 
\end{proof}
\begin{corollary}\label{cor:k}
There exist a small constant $c>0$ and a large constant $C>0$, which depend on $F$, such that,
if $m$ and $m'$ satisfy  (\ref{eqn:nc}) and $|m'|>C$, we have
\[
\kappa(x_\dag,\xi;z_\dag,\eta;y)\ge c\cdot 2^{\max\{|m|,|m'|\}}
\]
 for any $(x_\dag,\xi)\in \supp \Psi_m$, $(z_\dag,\eta)\in \supp ( \Psi_{m'}\cdot  X_\hyp)$ and $y\in \supp\, g$.
\end{corollary}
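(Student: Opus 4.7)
The plan is to deduce the corollary from Lemma~\ref{lm:le} by transporting the base points using the $\Af$-invariance of the cutoffs $\Psi_m$, and then controlling the discrepancy. Set $M=\max\{|m|,|m'|\}$ and argue by contradiction: assume $\kappa(x_\dag,\xi;z_\dag,\eta;y)<c_\star\cdot 2^M$ for some small $c_\star>0$ to be fixed. Since each of the four factors in the definition of $\kappa$ is at least $1$, each is individually bounded by $c_\star\cdot 2^M$, yielding
\[
|\xi_0-\eta_0|<c_\star\cdot 2^M,\quad \|x_\dag-y_\dag\|<\frac{c_\star\cdot 2^M}{\langle\xi_0\rangle^{1/2}},\quad \|F_\dag(y_\dag)-z_\dag\|<\frac{c_\star\cdot 2^M}{\langle\eta_0\rangle^{1/2}},
\]
together with $\|\xi-{}^tDF_y\eta\|<c_\star\cdot 2^M\langle\|\eta\|\rangle^{1/2}$. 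The strategy is to contradict the last bound.

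Using the formula ${}^tDA_{(0,c_\dag)}(\xi_0,\xi_\dag)=(\xi_0,\xi_\dag+\xi_0 J(c_\dag))$ from Subsection~\ref{ss:localdiffeo}, introduce the transported covectors
\[
\xi':=(\xi_0,\,\xi_\dag+\xi_0 J(x_\dag-y_\dag)),\qquad \eta':=(\eta_0,\,\eta_\dag+\eta_0 J(z_\dag-F_\dag(y_\dag))).
\]
The $\Af$-invariance of $\Psi_m$ then guarantees $(y_\dag,\xi')\in\supp\Psi_m$ and $(F_\dag(y_\dag),\eta')\in\supp\Psi_{m'}$, so Lemma~\ref{lm:le} applies and gives $\|\xi'-{}^tDF_y\eta'\|\ge c\cdot 2^M\max\{\langle\|\xi'\|\rangle^{1/2},\langle\|\eta'\|\rangle^{1/2}\}$ with $c$ independent of $c_\star$. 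The perturbations are controlled by $\|\xi-\xi'\|=|\xi_0|\|x_\dag-y_\dag\|\le c_\star\cdot 2^M\langle\xi_0\rangle^{1/2}$ and $\|\eta-\eta'\|\le c_\star\cdot 2^M\langle\eta_0\rangle^{1/2}$.

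The crux of the argument is to show that $\max\{\langle\|\xi'\|\rangle^{1/2},\langle\|\eta'\|\rangle^{1/2}\}$ is bounded below by a constant multiple of $\langle\|\eta\|\rangle^{1/2}$. The key input is that the very definition of $\Psi_m$ forces $\|\xi\|\gtrsim 2^{2|m|}$ on $\supp\Psi_m$ (and similarly $\|\eta\|\gtrsim 2^{2|m'|}$ on $\supp\Psi_{m'}$) whenever $|m|,|m'|$ are sufficiently large, with implicit constants depending only on the diameter of $\supp g$. A short case analysis---splitting on whether $\|\eta\|\ge 4c_\star^2\cdot 2^{2M}$ or not---verifies the desired lower bound: in the first case the perturbation of $\eta$ is small relative to $\|\eta\|$, so $\|\eta'\|\ge \|\eta\|/2$; in the second case one must have $M=|m|>|m'|$, and then the analogous lower bound $\|\xi'\|\gtrsim 2^{2|m|}$ dominates the smaller quantity $\langle\|\eta\|\rangle^{1/2}\lesssim c_\star\cdot 2^M$. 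This is precisely where the hypothesis $|m'|>C$ is invoked, to absorb the absolute constants arising from the diameter of $\supp g$ and from $\|DF\|_\infty$. Combining via the triangle inequality then yields $\|\xi-{}^tDF_y\eta\|\ge((c/2)-(1+\|DF\|_\infty)c_\star)\cdot 2^M\langle\|\eta\|\rangle^{1/2}$, and choosing $c_\star<c/(4(1+\|DF\|_\infty))$ contradicts the assumed upper bound. The main technical obstacle is this case analysis, where the transport from $(x_\dag,z_\dag)$ to $(y_\dag,F_\dag(y_\dag))$ could a priori destroy the comparability between $\langle\|\xi'\|\rangle,\langle\|\eta'\|\rangle$ and $\langle\|\xi\|\rangle,\langle\|\eta\|\rangle$, and it is only the quadratic growth $\|\xi\|\gtrsim 2^{2|m|}$ together with the largeness of $|m'|$ that saves the day.
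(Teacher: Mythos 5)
Your proof is correct in substance and arrives at the same conclusion via the same key ingredients---transport of $\xi,\eta$ to $\xi',\eta'$ via $\Af$-invariance of $\Psi_m$, application of Lemma~\ref{lm:le}, and a comparison of magnitudes---but the logical organization differs from the paper's. The paper argues directly: it records the auxiliary estimate \eqref{eqn:we} (which follows from \eqref{eqn:le} plus the triangle inequality applied to $\xi'$ and ${}^tDF_y\eta'$), and then runs a trichotomy (a)/(b)/(d) showing that at least one of the second, third, or fourth factors of $\kappa$ must be large, with \eqref{eqn:we} used to pass from the primed to the unprimed normalization when (a) and (b) both fail. You instead argue by contradiction, extracting smallness of all four factors from the contradiction hypothesis, and then resolving the comparison of normalizations by a case split on $\|\eta\|$ versus $4c_\star^2\cdot 2^{2M}$, using the intrinsic lower bounds $\|\xi'\|\gtrsim 2^{2|m|}$ and $\|\eta'\|\gtrsim 2^{2|m'|}$ coming from $\supp\Psi_m$ (these are exactly the bounds \eqref{lmass} exploited in the proof of Lemma~\ref{lm:le}). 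The paper's route is somewhat cleaner because the trichotomy structure makes the triangle-inequality bookkeeping more transparent; yours is more self-contained in that it re-derives the magnitude lower bounds rather than recycling \eqref{eqn:we}, at the cost of a slightly more delicate case analysis (in particular, the verification in Case 2 that $|m'|<|m|$ requires a fixed-point-style estimate on $\|\eta'\|$, which you assert but do not display).

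One imprecision worth correcting: you state ``$\|\xi\|\gtrsim 2^{2|m|}$ on $\supp\Psi_m$ ... with implicit constants depending only on the diameter of $\supp g$.'' As written this is false, because $(x_\dag,\xi)\in\supp\Psi_m$ alone imposes no such constraint when $x_\dag$ is unbounded: the membership condition is phrased in terms of $\xi-\xi_0\alpha_0(x_\dag)$, which can be large while $\|\xi\|$ stays small if $\|x_\dag\|$ is large. What is true, and what your case analysis actually uses, is the lower bound for the \emph{transported} covector: since $(y_\dag,\xi')\in\supp\Psi_m$ with $y\in\supp g$ (so $y_\dag$ ranges over a fixed compact set), one does get $\|\xi'\|\gtrsim 2^{2|m|}$ with the constant depending on $\supp g$, and likewise $\|\eta'\|\gtrsim 2^{2|m'|}$. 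Since the rest of your argument consistently invokes $\xi'$, $\eta'$ in this role, the slip does not propagate, but the statement of the ``key input'' should be rephrased accordingly.
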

\begin{proof}
From the assumptions $(z_\dag,\eta)\in \supp\, \Psi_{m'}$ and $(x_\dag,\xi)\in \supp\, \Psi_m$ and from invariance of $\Psi_m$ with respect to the natural action of the transformation group $\Af$, we have
\begin{align*}
&(y_\dag, {}^tDA_{(0,x_\dag-y_\dag)}(\xi))=(y_\dag, \xi-\xi_0(\alpha_0(x_\dag)-\alpha_0(y_\dag)))\in \supp\, \Psi_{m}\quad\mbox{and}\\
&(F_\dag(y_\dag), {}^tDA_{(0,z_\dag-F_\dag(y_\dag))}(\eta))=(F_\dag(y_\dag), \eta-\eta_0(\alpha_0(z_\dag)-\alpha_0(F_\dag(y_\dag))))\in \supp\,  \Psi_{m'}.
\end{align*}
Hence we can apply  Lemma \ref{lm:le} to the setting 
\[
\xi'=\xi-\xi_0(\alpha_0(x_\dag)-\alpha_0(y_\dag)),\qquad \eta'=\eta-\eta_0(\alpha_0(z_\dag)-\alpha_0(F_\dag(y_\dag))),
\]
and obtain the estimate (\ref{eqn:le}) as the conclusion. Note that the estimate (\ref{eqn:le}) implies in particular that there exits a constant $c'>0$, which depends on $F$, such that 
\begin{equation}\label{eqn:we}
\max\{\langle \|\xi'\|\rangle, \langle \|\eta'\|\rangle \}\ge c'\cdot 2^{\max\{|m|, |m'|\}}\cdot\max\{\langle \|\xi'\|\rangle^{1/2}, \langle \|\eta'\|\rangle^{1/2}\}.
\end{equation}
Since we have
\[
\|\xi-\xi'\|=|\xi_0|\cdot \|\alpha_0(x_\dag)-\alpha_0(y_\dag)\|=|\xi_0|\cdot \|x_\dag-y_\dag\|
\]
and
\[
\|\eta-\eta'\|=|\eta_0|\cdot \|\alpha_0(F_\dag(y_\dag))-\alpha_0(z_\dag)\|=|\eta_0|\cdot \|F_\dag(y_\dag)-z_\dag\|,
\]
the estimate (\ref{eqn:le}) implies also that we can take a small constant $c''>0$, which depend on $F$,   so that  either of the following three inequalities holds: 
\begin{itemize}
\setlength{\itemsep}{6pt plus 1pt minus 1pt}
\item[(a)]$\displaystyle \|\xi-\xi'\|=|\xi_0|\cdot \|x_\dag-y_\dag\| \ge c'' \cdot 2^{\max\{|m|,|m'|\}}\cdot \max\{\langle \|\xi'\|\rangle^{1/2}, \langle \|\eta'\|\rangle^{1/2}\}$, 
\item [(b)]
$\displaystyle \|\eta-\eta'\|=|\eta_0|\cdot \|F_\dag(y_\dag)-z_\dag\|\ge c'' \cdot 2^{\max\{|m|,|m'|\}}\cdot \max\{\langle \|\xi'\|\rangle^{1/2}, \langle \|\eta'\|\rangle^{1/2}\}$,
\item[(c)] $\displaystyle \|{}^tDF_y(\eta)- \xi\|\ge c''\cdot 2^{\max\{|m|,|m'|\}}\cdot \max\{\langle \|\xi'\|^{1/2}\rangle, \langle \|\eta'\|^{1/2}\rangle\}$.
\end{itemize}
Further, from (\ref{eqn:we}), we may take the constant $c''>0$ above so small that,  if neither of the inequality (a) nor (b) above holds, we have
\[
\max\{\|\xi'\|, \|\eta'\|\}\ge\frac{1}{2} \max\{\langle \|\xi\|\rangle, \langle \|\eta\|\rangle\}.
\]
Hence, with such choice of the constant $c''>0$, we always have either (a), (b) or 
\begin{itemize}
\item[(d)] $\displaystyle \|{}^tDF_0(\eta)- \xi\|\ge (c''/2)\cdot 2^{\max\{|m|,|m'|\}}\cdot \max\{\langle \|\xi\|\rangle^{1/2}, \langle \|\eta\|\rangle^{1/2}\}$.
\end{itemize}
Clearly the inequalities (a), (b) and  (d) imply respectively that the second, third and fourth term in the definition (\ref{eqn:kappa}) of $\kappa(\cdot)$ is so large that the conclusion of the corollary  holds for a sufficiently small constant $c>0$.
\end{proof}

\subsection{Proof of Lemma \ref{lm:sep}}\label{ss:pf7} 
From Lemma \ref{lm:ker} and Corollary \ref{cor:k}, for arbitrarily large $\mu>0$, there exists a constant $C_{\mu}>0$, which may depend on $F$ and $g$ but not on $m$ nor $m'$, such that 
\[
|v_{m,m'}(x_\dag,\xi)|
\le C_{\mu}\cdot  2^{-\mu \cdot \max\{|m|, |m'|\}}  \int_{\supp \Psi_{m'}\cap \supp\, X_{\hyp}} {\!\!\!\!\!\!\!\!\!\!} K_\mu(x_\dag,\xi;z_\dag,\eta)  |u_{m'}(z_\dag,\eta)| dz_\dag d\eta  
\]
where 
\[
K_\mu(x_\dag,\xi;z_\dag,\eta)=
\frac{\langle \xi_0\rangle^{d/2}\langle \eta_0\rangle^{d/2}}{
\langle |\xi_0-\eta_0|\rangle^{\mu}} \int_{\supp g} 
\left\langle
\frac{ \|x_\dag-y_\dag\|}{\langle\xi_0\rangle^{-1/2}}\right\rangle^{-\mu} 
\left\langle
\frac{ \|F_\dag(y_\dag)-z_\dag\|}{\langle \eta_0\rangle^{-1/2}}\right\rangle^{-\mu} dy.
\]
Note also that  the support of $v_{m,m'}$ is contained in $\supp\, \Psi_{m}$.
Hence, by the Schur test, the conclusion of Lemma~\ref{lm:sep} follows if we show
\begin{claim}There exists a constant $\nu>0$, which depends only on $d$ and $\tau$, such that 
\[
\int_{\supp \Psi_{m}} K_\mu (x_\dag,\xi;z_\dag,\eta) dx_\dag d\xi<C \cdot 2^{\nu\cdot \max\{|m|, |m'|\}}
\]
for $(z_\dag,\eta)\in \supp \Psi_{m'}\cap\, \supp\, X_{\hyp}$
and
\[
\int_{\supp \Psi_{m'}\cap\, \supp\, X_{\hyp}} K_\mu (x_\dag,\xi;z_\dag,\eta) dz_\dag d\eta<C \cdot 2^{\nu\cdot \max\{|m|, |m'|\}} 
\]
for $(x_\dag,\xi)\in \supp \Psi_{m}$, where $C>0$ is a constant that does not depend on $m$ nor $m'$. 
\end{claim}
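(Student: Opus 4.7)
The plan is to apply the Schur test directly, bounding the two displayed integrals of $K_\mu$ by a constant times $2^{\nu\max\{|m|,|m'|\}}$ with $\nu=\nu(d,\tau)$. Three ingredients drive the estimate.

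The indispensable gain comes from the intersection $\supp\Psi_{m'}\cap\supp X_\hyp$, which I claim forces $\langle\eta_0\rangle\leq C\cdot 2^{|m'|/(\tau-1/2)}$. Indeed, on $\supp X_\hyp$ we have $\|\eta-\eta_0\alpha_0(z_\dag)\|\geq (4/3)\langle\eta_0\rangle^{\tau}$, while $\supp\Psi_{m'}$ forces $\|\eta-\eta_0\alpha_0(z_\dag)\|/\langle\|\eta\|\rangle^{1/2}\leq C\cdot 2^{|m'|}$. Combining these and splitting into the two regimes $|\eta_0|\gtrsim\|\eta_\dag\|$ and $|\eta_0|<\|\eta_\dag\|$ (using that $\|z_\dag\|$ is effectively bounded by the $y$-bump in $K_\mu$, which localises $z_\dag$ near $F_\dag(\supp g)$) yields $\langle\eta_0\rangle^{\tau-1/2}\lesssim 2^{|m'|}$ in the first case and the stronger $\langle\eta_0\rangle\lesssim 2^{2|m'|}$ in the second (which fits since $\tau\leq 1$ gives $2\leq 1/(\tau-1/2)$). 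This is the step where $\tau>1/2$ becomes essential, and it is responsible for the $\tau$-dependence of $\nu$.

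Second, for fixed $x_\dag$ and $\xi_0$, the slice $\{\xi_\dag:(x_\dag,\xi)\in\supp\Psi_m\}$ has volume at most $C\cdot 2^{2d|m|}\bigl(\langle\xi_0\rangle^d(1+\|x_\dag\|^{2d})+2^{2d|m|}\bigr)$; this follows from $\Psi_m\neq 0$ implying $\|\xi_\dag+\xi_0 J(x_\dag)\|\lesssim 2^{|m|}\langle\|\xi\|\rangle^{1/2}$, which in turn gives $\langle\|\xi\|\rangle\lesssim\langle\xi_0\rangle(1+\|x_\dag\|)+2^{2|m|}$. With these observations in hand, I would integrate in the order $x_\dag$, then $y_\dag$ (after the change of variables $y_\dag\mapsto F_\dag(y_\dag)$, whose Jacobian is bounded on $\supp g$) and the trivial $y_0$, then $\xi_\dag$, then $\xi_0$. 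The two spatial bumps contribute $\langle\xi_0\rangle^{-d}$ and $\langle\eta_0\rangle^{-d}$ (provided $\mu>2d+1$), cancelling the prefactor $\langle\xi_0\rangle^{d/2}\langle\eta_0\rangle^{d/2}$; the $\|x_\dag\|^{2d}$ tail in the volume estimate is absorbed by splitting the $x_\dag$-region into $\|x_\dag-y_\dag\|\leq 1$ and its complement, on which the bump's rapid decay defeats any polynomial growth. Applying the $\eta_0$-bound from the first ingredient, the remaining $\xi_0$-integral is dominated by $2^{\nu_0\max\{|m|,|m'|\}}\int\langle|\xi_0-\eta_0|\rangle^{-\mu}d\xi_0\leq C\cdot 2^{\nu_0\max\{|m|,|m'|\}}$ with $\nu_0\sim 4d+d/(\tau-1/2)$.

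The second inequality is handled by an entirely symmetric argument: the bound $\langle\eta_0\rangle\leq C\cdot 2^{|m'|/(\tau-1/2)}$ still applies since $(z_\dag,\eta)\in\supp\Psi_{m'}\cap\supp X_\hyp$, and now the factor $\langle|\xi_0-\eta_0|\rangle^{-\mu}$ is used to transfer control to the fixed $\xi_0$ (either $|\xi_0|$ lies in the same dyadic range as $|\eta_0|$, or $\langle\xi_0\rangle^{-\mu+d/2}$ decays fast enough to make the bound trivial). The main obstacle is the first ingredient above: without the hypothesis $\tau>1/2$, nothing prevents $|\eta_0|$ from being arbitrarily large on $\supp\Psi_{m'}\cap\supp X_\hyp$ and the polynomial prefactor $\langle\xi_0\rangle^{d/2}\langle\eta_0\rangle^{d/2}$ would never be absorbed; all remaining steps are standard Schur-type estimates, provided one remembers throughout that the $\Af$-invariant frequency $\xi-\xi_0\alpha_0(x_\dag)$ (rather than $\xi$ itself) is what enters the support conditions of both $\Psi_m$ and $X_\hyp$.
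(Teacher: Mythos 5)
Your argument is correct and is essentially the paper's own: the paper records exactly your first ingredient (the bound $\langle\eta_0\rangle\lesssim 2^{O(|m'|/(\tau-1/2))}$ from $\supp\Psi_{m'}\cap\supp X_\hyp$, using $\tau>1/2$) together with the slice bounds $\|\xi_\dag-\xi_0\alpha_0(x_\dag)\|\le 2^{2|m|+4}\langle\xi_0\rangle^{1/2}$, and then says the claim follows ``just by calculating the integrals in the statement using these estimates,'' which is precisely the Schur-type computation you carry out. Your write-up simply fills in that computation (with a few harmless overestimates, e.g.\ the extraneous $(1+\|x_\dag\|^{2d})$ in the slice volume and the Peetre-type absorption of $\langle\xi_0\rangle^{d/2}$ into $\langle|\xi_0-\eta_0|\rangle^{-\mu}$, which you leave implicit), so there is no real divergence from the paper's route.
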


For $(x_\dag,\xi)\in \supp \Psi_m$, we have that 
\[
\frac{\|\xi_\dag-\xi_0\cdot \alpha_0(x_\dag)\|}{\langle 2\cdot\max\{|\xi_0|, \|\xi_\dag-\xi_0\cdot \alpha_0(x_\dag)\|\}\rangle^{1/2}}\le
\frac{\|\xi_\dag-\xi_0\cdot \alpha_0(x_\dag)\|}{\langle (\xi_0^2+\|\xi_\dag-\xi_0\cdot \alpha_0(x_\dag)\|^2)^{1/2}\rangle^{1/2}}\le 2^{m+1}
\]
and hence in particular that
\[
\|\xi_\dag-\xi_0\cdot \alpha_0(x_\dag)\|\le  2^{2|m|+4}\cdot \langle \xi_0\rangle^{1/2}.
\]
Similarly,  for $(z_\dag,\eta)\in  \supp \Psi_m$, we have
\[
\|\eta_\dag-\eta_0\cdot \alpha_0(z_\dag)\|\le  2^{2|m'|+4}\cdot \langle \eta_0\rangle^{1/2}.
\]
On the other hand, if  $(z_\dag,\eta)\in \supp\, X_\hyp$, we have
\[
\langle \eta_0\rangle^{\tau}\le \|\eta_\dag-\eta_0\cdot \alpha_0(z)\|
\]
by definition. Hence, for $(z_\dag,\eta)\in \supp \Psi_{m'}\cap\, \supp\, X_{\hyp}$, we have
\[
| \eta_0|\le  2^{(|m'|+4)/(\tau-(1/2))}.
\]
We can show the claim above just by calculating the integrals in the statement using these estimates.


\section{The central part}\label{s:ctr}
In this section, we deal with the central part of the transfer operator. 
As in the last section, we consider the situation assumed in Theorem \ref{th:localmain} and use the notation prepared in the previous sections. 
We will prove 
\begin{proposition}\label{pp:ctrnorm}
The central part $\L_{\mathrm{ctr}}$ of the transfer operator $\L$ extends naturally  to a bounded linear operator 
$\L_{\mathrm{ctr}}:H^r_{\mathrm{aniso}}\to H^r_{\mathrm{aniso}}$. Further, there exists a constant $C_0>0$, which does not depend on $F$ and $g$, such that, if we take sufficiently large number for the constant  $N$ according to $F$ and $g$,  
the operator norm of the extension $\L_{\mathrm{ctr}}:H^r_{\mathrm{aniso}}\to H^r_{\mathrm{aniso}}$ is bounded by
\begin{equation}\label{eqn:cst}
C_0\cdot\max\{\Lambda(F,g),  \|g\|_{\infty}\cdot \lambda^{-r}\cdot \Delta(F,g)\}.
\end{equation}
\end{proposition}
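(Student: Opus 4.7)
\medskip
\noindent\textbf{Proof proposal.} The plan is to decompose $\widehat{\L}_{\ctr}$ as a sum of operators, each of which is well approximated by the linear model $\L_{\Id\oplus B}$ studied in Lemma~\ref{lm:linFBI}, and then to combine these pieces by an almost orthogonality argument.

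First I would introduce a family of phase-space boxes adapted to the scales dictated by $X_{\ctr}(1-X_0)$. On the support of this weight, $\langle \xi_0\rangle\ge N$ while $\|\xi-\xi_0\alpha_0(x_\dag)\|\le 2\langle\xi_0\rangle^{\tau}$, so a wave packet $\Phi_{x_\dag,\xi}$ is spatially concentrated on a ball of radius $\langle\xi_0\rangle^{-1/2}$ and the transversal covector lies in a ball of radius $\langle\xi_0\rangle^{\tau}$ around $\xi_0\alpha_0(x_\dag)$. Accordingly, I would cover the support of $X_\ctr(1-X_0)$ by a locally finite collection of boxes $B_\alpha$ centered at points $(c_\alpha,\omega_\alpha)$ with spatial radius $\sim\langle\omega_{\alpha,0}\rangle^{-1/2}$ and frequency radius $\sim\langle\omega_{\alpha,0}\rangle^{1/2}$, choose a subordinate smooth partition of unity $\{\rho_\alpha\}$, and write
\[
\widehat{\L}_{\ctr}=\sum_{\alpha,\beta}\rho_\beta\,\widehat{\L}\,\big(\rho_\alpha\cdot X_{\ctr}(1-X_0)\cdot\;\cdot\;\big)=\sum_{\alpha,\beta}\widehat{\L}_{\beta\alpha}.
\]

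Next, on a single box $B_\alpha$, I would replace the diffeomorphism $F$ by its affine approximation at~$c_\alpha$. After a conjugation by affine transformations $A_{c_\alpha}, A_{F(c_\alpha)}\in\Af$ (which preserve $\pPj$ and the weight $\weight^r_\aniso$ up to harmless cocycle factors coming from Lemma~\ref{FBIiso}) and an appropriate dilation by $\langle\omega_{\alpha,0}\rangle^{1/2}$ in the transversal directions, the linearized piece becomes, up to the multiplicative constant $g(c_\alpha)$, an operator of the form $\L_{\Id\oplus B_\alpha}$ where $B_\alpha$ preserves the symplectic form $\omega_\dag$ (by $F^*\alpha_0=\alpha_0$) and inherits $\lambda$-hyperbolicity from $F$. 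Lemma~\ref{lm:linFBI} then gives a bound
\[
C_0\cdot|g(c_\alpha)|\cdot\max\{d(B_\alpha)^{-1/2},\;d(B_\alpha)^{1/2}\cdot\lambda^{-r}\}
\]
for the operator norm of this model on $L^2(\mathcal{V}_{\langle\omega_{\alpha,0}\rangle})$, which matches the claimed bound (\ref{eqn:cst}) since $d(B_\alpha)^2$ is, up to a bounded factor, $|\det DF_{c_\alpha}|_{E^+}|$. The error from this linearization must then be shown to be negligible: by Proposition~\ref{pp:basic}, for $y$ in the spatial box (of radius $\langle\omega_{\alpha,0}\rangle^{-1/2}$) and $\xi$ in the frequency box,
\[
\|{}^tDF_y\xi-{}^tDF_{c_\alpha}\xi\|\le C\bigl(|\xi_0|\cdot\langle\omega_{\alpha,0}\rangle^{-1}+\langle\omega_{\alpha,0}\rangle^{\tau}\cdot\langle\omega_{\alpha,0}\rangle^{-1/2}\bigr)=O\bigl(\langle\omega_{\alpha,0}\rangle^{\tau-1/2}\bigr),
\]
which, after the $\langle\omega_{\alpha,0}\rangle^{1/2}$ rescaling, is $O(\langle\omega_{\alpha,0}\rangle^{\tau-1})$ and so tends to zero (since $\tau<1$). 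With a bit more care, one uses the kernel estimate of Lemma~\ref{lm:ker} to bound the difference between $\widehat{\L}_{\beta\alpha}$ and its linearized model in operator norm by a sequence that is summable in $\alpha$ after taking $N$ large.

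Finally, I would glue the pieces together by an almost orthogonality argument of Cotlar--Stein type. Lemma~\ref{lm:ker} with $\rho$ arbitrarily large gives rapid decay of the kernel of $\widehat{\L}_{\beta\alpha}$ in terms of the distance between the boxes $B_\beta$ and $\widetilde F(B_\alpha)$ (measured with the anisotropic metric $\kappa$), so both $\sum_\beta\|\widehat{\L}_{\beta\alpha}^*\widehat{\L}_{\beta'\alpha}\|^{1/2}$ and $\sum_\alpha\|\widehat{\L}_{\beta\alpha}\widehat{\L}_{\beta\alpha'}^*\|^{1/2}$ are uniformly bounded; combined with the per-piece bound and the fact that the weight $\weight^r_\aniso$ varies slowly on each box, this yields the required estimate on the operator norm of $\widehat{\L}_\ctr$ acting on $L^2(\real^{2d}\oplus\real^{2d+1};\weight^r_\aniso)$, and hence on $\L_\ctr:H^r_\aniso\to H^r_\aniso$.

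The main obstacle I anticipate is the linearization step: quantitatively controlling the discrepancy between $\widehat{\L}$ and its piecewise linear model in a manner compatible with the weight $\weight^r_\aniso$ (which itself has nontrivial behavior along the directions of the cones $\cone^*_\pm$). The constant $N$ must be chosen large enough that the error terms from Proposition~\ref{pp:basic}, after summation over $\alpha,\beta$, produce a contribution smaller than (\ref{eqn:cst}); this is precisely the role played by the condition $\tau$ very close to $1/2$ in the decomposition.
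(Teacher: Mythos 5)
Your proposal takes essentially the same route as the paper's proof: decompose $\widehat{\L}_{\ctr}$ into phase-space-localized pieces, replace $F$ by its affine approximation at the center of each piece to reduce (after conjugation by elements of $\Af$ and a scaling) to the linear model controlled by Lemma~\ref{lm:linFBI}, and reassemble the pieces by an almost-orthogonality argument based on the kernel decay in Lemma~\ref{lm:ker}. The paper implements this with a two-step partition (first in $\xi_0$ by $\tilde q_k$, then in $x_\dag$ by $Q_{k,\mathbf k}$ of size $k^{-1+\delta}$, deliberately a $\delta$-margin larger than the wave-packet scale $k^{-1}$ so that, with $\tau$ close to $1/2$, the linearization error in Lemma~\ref{lm:diff2} decays in $k$), never cutting in $\xi_\dag$ so that each piece matches the tensor-product model of Subsection~\ref{ss:sep} exactly, and it also handles two details your sketch leaves implicit but which must be checked---the contact phase $e^{i\eta_0 f(y_\dag)}$ and the $y_0$-dependence of $g$ in the kernel---both tamed precisely by Proposition~\ref{pp:basic} in the way you anticipate.
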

Clearly this proposition, together with Lemma  \ref{lm:cpt} and Proposition \ref{pp:hyp},  completes the proof of Theorem \ref{th:localmain}.

\subsection{Beginning of the proof of Proposition \ref{pp:ctrnorm}}
Now we begin the proof of  Proposition \ref{pp:ctrnorm}.  
For the proof, it is enough to show that the central part $\widehat{\L}_{\ctr}$ of the lift $\widehat{\L}$ extends naturally to a bounded operator 
\[
\widehat{\L}_{\ctr}:L^2(\real^{2d}\oplus \real^{2d+1};\weight^r_\aniso)\to L^2(\real^{2d}\oplus \real^{2d+1};\weight^r_\aniso)
\]
and that the  operator norm of the extension is bounded by (\ref{eqn:cst}).
In other words, it is enough to prove
\begin{equation}\label{eqn:ctrest}
\|\mto \|_{L^2}\le C_0\cdot \max\{\Lambda(F,g),\; \|g\|_{\infty}\cdot \lambda^{-r}\cdot \Delta(F,g)\}
\end{equation}
for the operator
\[
\mto:\Sch(\real^{2d}\oplus \real^{2d+1})\to \Sch(\real^{2d}\oplus \real^{2d+1}),\quad 
\mto u= \weight_{\mathrm{aniso}}^r\cdot \widehat{\L}\left((\weight_{\mathrm{aniso}}^r)^{-1}\cdot X_{\ctr,0} \cdot u\right)
\]
where (and henceforth) we set 
\[
X_{\ctr,0}(z_\dag,\eta)=X_{\ctr}(z_\dag,\eta)\cdot (1-X_{0}(z_\dag,\eta))
\]
for simplicity. 
We write the  operator $\mto$ as an integral operator 
\begin{equation}
\mto u(x_\dag,\xi)=\int \frac
{\weight^r_{\mathrm{aniso}}(x_\dag,\xi)\cdot X_{\ctr,0}(z_\dag,\eta)}
{\weight^r_{\mathrm{aniso}}(z_\dag,\eta)}\cdot K(x_\dag,\xi;z_\dag,\eta) u(z_\dag,\eta) dz_\dag d\eta
\end{equation}
where $K(x_\dag,\xi;z_\dag,\eta)$ is the kernel of $\widehat{\L}$ given in (\ref{eqn:K}). Note that, if we perform the integration with respect to the first variable $y_0$
 in $y=(y_0, y_\dag)$ in (\ref{eqn:K}), we obtain the following expression of the kernel $K(x_\dag,\xi;z_\dag,\eta)$:
\begin{equation}\label{eqn:K2}
K(x_\dag,\xi;z_\dag,\eta)=\int \hat{g}(\xi_0-\eta_0, y_\dag)\cdot \Phi(x_\dag, \xi_0, z_\dag,\eta_0;y_\dag) 
\cdot e^{i\tau( x_\dag, \xi_\dag;z_\dag,\eta_\dag;y_\dag) } dy_\dag
\end{equation}
where
\begin{align*}
&\hat{g}(\xi_0, y_\dag)=(2\pi)^{-1/2}\cdot \int e^{-i\xi_0 y_0}\cdot g(y_0, y_\dag)\, dy_0
\intertext{and }
&\Phi(x_\dag, \xi_0;z_\dag,\eta_0;y_\dag) =\frac{\langle\xi_0\rangle^{d/2}\cdot \langle \eta_0\rangle^{d/2}}{\pi^d\cdot (2\pi)^{2d}}\cdot e^{-\langle\xi_0\rangle|x_\dag-y_\dag|^2/2-\langle\eta_0\rangle|F_\dag(y_\dag)-z_\dag|^2/2},\\
&\tau(x_\dag, \xi_\dag;z_\dag,\eta_\dag;y_\dag)=\xi_\dag\cdot ((x_\dag/2)-y_\dag) +\eta_\dag\cdot (F_\dag(y_\dag)-(z_\dag/2))+ \eta_0 \cdot  f(y_\dag).
\end{align*}

\subsection{Almost orthogonal decomposition of the operator $\mto$}
In this subsection, we decompose the operator $\mto$ into countably many operators, which are "almost orthogonal" to each other and then reduce the claim (\ref{eqn:ctrest}) to a similar claim for each of them. 

First we consider a simple partition of unity on the real line 
\[
\{q_k(t):\real\to [0,1]\}_{k\in \integer},\qquad q_k(t)=\chi(t-k+1)-\chi(t-k+2).
\]
Then, pulling back this partition of unity by the homeomorphism
\[
\gamma:\real\to \real,\quad \gamma(t)=\begin{cases}
\sqrt{t},&\mbox{ if $t\ge 0$;}\\
-\sqrt{-t},&\mbox{ if $t<0$,}
\end{cases}
\]
we define another partition of unity
\[
\{\tilde{q}_k:\real\to [0,1]\}_{k\in \integer},\qquad \tilde{q}_k(t)=q_k\circ \gamma(t).
\] 
The support of $\tilde{q}_k$ for $k> 0$ (resp. $k<0$) is contained in the interval 
\[
[(k-(2/3))^2, (k+(2/3))^2]\quad \mbox{iresp.  $[-(k+(2/3))^2, -(k-(2/3))^2]$).}
\]
Hence we can take a small constant $c>0$ such that 
\begin{equation}\label{eqn:dist0}
 d(\supp\, \tilde{q}_k, \supp\, \tilde{q}_{k'})\ge c\cdot \max\{ k,k'\} \quad \mbox{whenever $|k-k'|\ge 2$.}\quad
\end{equation}

We decompose the operator $\mto$ into countably many operators 
\[
\mto_k:\Sch(\real^{2d}\oplus \real^{2d+1})\to \Sch(\real^{2d}\oplus \real^{2d+1})\qquad \mbox{for $k\in \integer$}
\]
 defined by
\[
\mto_k u=\mto(\tilde{q}_k\cdot  u)
\]
where we identify $\tilde{q}_k$ with the function $(x_\dag,\xi)\mapsto \tilde{q}_k(\xi_0)$ on $\real^{2d}\oplus \real^{2d+1}$. Note that the operator $\mto_k$ with  $k^2\le N/2$ vanishes 
because so does  the term $X_{\ctr,0}(z,\eta)\cdot \tilde{q}_k(\eta_0)$ in its definition.

The operators $\mto_k$ are almost orthogonal to each other in the following sense.
\begin{lemma}\label{lm:ortho}
For arbitrarily large  $\nu>0$, there exists a constant $C_\nu>0$, which may depend on $F$ and $g$, such that, if $|k-k'|\ge 2$, we have
\[
|(\mto_k u, \mto_{k'}v)_{L^2}|\le C_\nu\cdot \max\{k,k'\}^{-\nu}\cdot \|u\|_{L^2}\cdot \|v\|_{L^2}.
\]
\end{lemma}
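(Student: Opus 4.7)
The plan is to estimate the kernel of the composition $\mto_k^*\mto_{k'}$ by combining the rapid decay of the kernel of $\widehat{\L}$ in $\langle\xi_0-\eta_0\rangle$ given by Lemma~\ref{lm:ker} with the support separation (\ref{eqn:dist0}), and then to apply the Schur test. Denote by $M(x_\dag,\xi;z_\dag,\eta)$ the Schwartz kernel of $\mto$. Fubini gives
\[
(\mto_k u,\mto_{k'} v)_{L^2}=\iint \tilde{q}_k(\eta_0)\,\tilde{q}_{k'}(\eta_0')\,\mathcal{K}(z_\dag,\eta;z_\dag',\eta')\,\overline{u(z_\dag,\eta)}\,v(z_\dag',\eta')\,dz_\dag d\eta\,dz_\dag'd\eta'
\]
with $\mathcal{K}(z_\dag,\eta;z_\dag',\eta')=\int \overline{M(x_\dag,\xi;z_\dag,\eta)}\,M(x_\dag,\xi;z_\dag',\eta')\,dx_\dag d\xi$, and it suffices, via the Schur test, to bound each of the two partial $L^1$-norms of the weighted kernel $\tilde{q}_k\tilde{q}_{k'}\mathcal{K}$ by $C_\nu\max\{k,k'\}^{-\nu}$.

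The key step is the use of frequency separation inside $\mathcal{K}$. Whenever $\tilde{q}_k(\eta_0)\tilde{q}_{k'}(\eta_0')\ne 0$ and $|k-k'|\ge 2$, the estimate (\ref{eqn:dist0}) forces $|\eta_0-\eta_0'|\ge c\max\{k,k'\}$, hence for every $\xi_0\in\real$ we have
\[
\max\{\langle\xi_0-\eta_0\rangle,\,\langle\xi_0-\eta_0'\rangle\}\ge \frac{c}{2}\max\{k,k'\}.
\]
Applying Lemma~\ref{lm:ker} to both $M$-factors with a large exponent $\rho$, the integrand in $\mathcal{K}$ contains the product $\langle\xi_0-\eta_0\rangle^{-\rho}\langle\xi_0-\eta_0'\rangle^{-\rho}$. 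I would then peel off one factor of size $\max\{k,k'\}^{-\nu}$ from whichever of the two decay factors is larger, while keeping enough of the remaining decay to perform the $\xi_0$-integration, producing the required polynomial gain in $\max\{k,k'\}$.

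The rest is bookkeeping. The surviving factors of $\kappa^{-\rho}$ from Lemma~\ref{lm:ker} provide Gaussian-type decay in $\|x_\dag-y_\dag\|$ and $\|F_\dag(y_\dag)-z_\dag\|$, together with a momentum-mismatch decay in $\|\xi-{}^tDF_y\eta\|/\langle\|\eta\|\rangle^{1/2}$, all of which control the spatial and transverse-frequency integrations in $x_\dag,z_\dag,z_\dag',y,\xi_\dag,\eta_\dag,\eta_\dag'$. The pre-factors $\langle\xi_0\rangle^{d/2}\langle\eta_0\rangle^{d/2}$ and the weight ratio $\weight^r_\aniso(x_\dag,\xi)/\weight^r_\aniso(z_\dag,\eta)$ on the support of $X_{\ctr,0}$ grow at most polynomially in $\langle\eta_0\rangle,\langle\eta_0'\rangle\lesssim\max\{k,k'\}^2$ (the latter by the central-region condition $\|\eta_\dag-\eta_0\alpha_0(z_\dag)\|\le 2\langle\eta_0\rangle^\tau$), and are absorbed into the decay by choosing $\rho$ sufficiently large. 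The main obstacle I anticipate is precisely this accounting: the factor $\max\{k,k'\}^{-\nu}$ must dominate every polynomial power of $\max\{k,k'\}$ coming from the weights, the Jacobians and the range of the spatial and transverse integrations; since $\rho$ in Lemma~\ref{lm:ker} is arbitrary this is a technical rather than conceptual difficulty, but it is where most of the detailed work will be done.
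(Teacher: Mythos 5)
Your proposal follows exactly the paper's (intentionally terse) route: estimate the kernel of $\mto_k^*\circ\mto_{k'}$ via Lemma~\ref{lm:ker} and the frequency-support separation~(\ref{eqn:dist0}), then apply the Schur test. The details you supply---the observation that the separation forces $\max\{\langle\xi_0-\eta_0\rangle,\langle\xi_0-\eta_0'\rangle\}\gtrsim\max\{|k|,|k'|\}$, and the absorption of the polynomial growth of the prefactors and the weight ratio into the arbitrary-order decay afforded by $\kappa^{-\rho}$---are precisely the ``straightforward and tedious'' bookkeeping the author chooses to omit.
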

\begin{proof}
We estimate the kernel of $\mto_k^*\circ \mto_{k'}$ by using Lemma \ref{lm:ker} and (\ref{eqn:dist0}), and then apply the Schur test to obtain the conclusion. We omit the details as it is straightforward and tedious. 
\end{proof}

We next show that the required estimate (\ref{eqn:ctrest}) follows if we show 
\begin{equation}\label{eqn:ctrestk}
\|\mto_k\|_{L^2}\le C_0\cdot \max\{\Lambda(F,g),\; \|g\|_{\infty}\cdot \lambda^{-r}\cdot \Delta(F,g)\}
\end{equation}
for all $k$. Take a function $u\in \Sch(\real^{2d}\oplus \real^{2d+1})$ arbitrarily and set $u_k=\mathbf{1}_{\supp\, \tilde{q}_k}\cdot u$. Since the intersection multiplicity of $\supp\, \tilde{q}_k$ is bounded by $2$, we have
\begin{equation}\label{eqn:sumuk}
\sum_{k}\|u_k\|^2\le 2\|u\|^2_{L^2}.
\end{equation}
Let us write $\|\mto u\|^2_{L^2}$ as 
\begin{align*}
\|\mto u\|^2_{L^2}&= \sum_{k,k'} (\mto_k u_k, \mto_{k'}u_{k'})_{L^2}\\
&\le \sum_{|k-k'|\le 1} \|\mto_k u_k\|_{L^2}\cdot \|\mto_{k'} u_{k'}\|_{L^2}+\sum_{|k-k'|\ge 2} (\mto_k u_k, \mto_{k'}u_{k'})_{L^2}.
\end{align*}
From Lemma \ref{lm:ortho}, (\ref{eqn:sumuk}) and the fact that $\mto_k$ with  $k^2\le N/2$ vanishes, the second sum on the last line above  should be much smaller than $\|u\|_{L^2}^2$ in ratio, provided that the constant $N$ is sufficiently large. 
For the first sum, the estimate (\ref{eqn:ctrestk}), together with  (\ref{eqn:sumuk}), will yield the estimate
\begin{align*}
\sum_{|k-k'|\le 1} \|\mto_k u_k\|_{L^2} \|\mto_{k'} u_{k'}\|_{L^2}&\le
\sum_{|k-k'|\le 1}\frac{ \|\mto_k u_k\|_{L^2}^2+ \|\mto_{k'} u_{k'}\|_{L^2}^2}{2}\le 
 3 \sum_{k} \|\mto_k u_k\|^2_{L^2}\\
 & \le 3 \left(C_0\max\{\Lambda(F,g),\; \|g\|_{\infty}\lambda^{-r} \Delta(F,g)\}\right)^2 \sum_{k} \|u_k\|_{L^2}^2\\
 & \le 6 \left(C_0\max\{\Lambda(F,g),\; \|g\|_{\infty}\lambda^{-r}\Delta(F,g)\}\right)^2  \|u\|_{L^2}^2.
\end{align*}
Therefore the required estimate (\ref{eqn:ctrest}) follows from (\ref{eqn:ctrestk}).

Next we will go through a similar procedure as above, but this time we consider a partition of unity in the space variables. Below we take and fix an arbitrary $k\in \integer$ such that $k^2\ge N/2$. Take a partition of unity on $\real^{2d}$,
\[
Q_{k,\k}:\real^{2d}\to [0,1],\qquad  \k=(k_1,k_2,\cdots, k_{2d})\in \integer^{2d}
\]
 defined by 
\[
Q_{k,\k}(x_\dag)= \prod_{j=1}^{2d} q_{k_j}(k^{1-\delta}\cdot x_j)\quad \mbox{for $x_\dag=(x_1, x_2, \cdots, x_{2d})$}
\]
where $q_{k}(\cdot)$ is the function introduced in the beginning of this subsection and $\delta>0$ is a small number that will be specified later. 
Note that the supports of $Q_{k,\k}$ and $Q_{k,\k'}$ intersects only if $
\max_{i} |k_i-k'_i|\le 1$, and otherwise  we have
\[
d(\,\supp\, Q_{k,\k}\,, \,\supp\, Q_{k,\k'}\,)\ge c\cdot  |k|^{-1+\delta}\cdot \max_{1\le i\le 2d} |k_i-k'_i|
\]
for some small constant $c>0$ independent of $k$ and $\k$. 

By using the partition of unity $\{Q_{k,\k}\}_{\k\in \integer^{2d}}$, we decompose the operator $\mto_k$ into countably many operators 
\[
\mto_{k,\k} u= \mto_k (Q_{k,\k}\cdot u)=\mto (\tilde{q}_k\cdot Q_{k,\k}\cdot u), \qquad 
\mbox{for  $\k\in \integer^{2d}$,}
\]
where we identify $Q_{k,\k}$ with the function $(x,\xi)\mapsto Q_{k,\k}(x)$ on $\real^{2d}\oplus \real^{2d+1}$.

Arguing in the similar way as in the proof of Lemma \ref{lm:ortho}, we can show
\begin{lemma}\label{lm:ortho2}
For any $\nu>0$, there exists a constant $C_\nu>0$, which depends on $F$ and $g$, such that, if 
\[
d(\k, \k'):=\max_{1\le i\le 2d} |k_i-k'_i|\ge 2,
\]
it holds
\[
|(\mto_{k,\k} u, \mto_{k,\k'}v)_{L^2}|\le C_\nu\cdot |k|^{-\delta\nu}\cdot d(\k, \k')^{-\nu}\cdot \|u\|_{L^2}\cdot \|v\|_{L^2}.
\]
\end{lemma}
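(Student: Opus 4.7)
The plan is to prove Lemma \ref{lm:ortho2} by the same template as Lemma \ref{lm:ortho}: write out the Schwartz kernel of the composition $\mto_{k,\k}^*\circ \mto_{k,\k'}$, estimate it pointwise using Lemma \ref{lm:ker}, and finish by the Schur test. The only genuinely new feature is that here the almost orthogonality comes from \emph{spatial} separation of supports in the $z_\dag$ variable, and one must compare this separation, which is $\gtrsim |k|^{-1+\delta} d(\k,\k')$, against the Gaussian localization scale $\langle \eta_0\rangle^{-1/2}\sim |k|^{-1}$ built into the partial FBI transform (note that on $\supp \tilde q_k$ we have $|\eta_0|\asymp k^2$). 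Writing $\widetilde K(x_\dag,\xi;z_\dag,\eta)$ for the weighted kernel of $\mto$ (that is, $K$ multiplied by the weight ratio $\weight^r_\aniso(x_\dag,\xi)/\weight^r_\aniso(z_\dag,\eta)$ and the cutoff $X_{\ctr,0}$), the kernel of $\mto_{k,\k}^*\circ \mto_{k,\k'}$ is
\[
\mathbf{M}(z_\dag,\eta;z'_\dag,\eta')=Q_{k,\k}(z_\dag)\tilde q_k(\eta_0)\,Q_{k,\k'}(z'_\dag)\tilde q_k(\eta'_0)\!\int\!\overline{\widetilde K(x_\dag,\xi;z_\dag,\eta)}\,\widetilde K(x_\dag,\xi;z'_\dag,\eta')\,dx_\dag d\xi.
\]

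Next I would feed both copies of $\widetilde K$ into Lemma \ref{lm:ker} with a large exponent $\rho$. The factors in $\kappa$ from (\ref{eqn:kappa}) confine the auxiliary $y$-variable of each kernel so that $y_\dag$ lies within $O(|k|^{-1})$ of $x_\dag$ and $F_\dag(y_\dag)$ within $O(|k|^{-1})$ of $z_\dag$, and similarly for the primed copy. Since $F_\dag$ is a diffeomorphism with bounded Jacobian on $\supp g$, combining these constraints via the triangle inequality yields $\|z_\dag - z'_\dag\|\lesssim |k|^{-1}$ up to large $\kappa$-factors. But when $d(\k,\k')\ge 2$, the separation of $\supp Q_{k,\k}$ and $\supp Q_{k,\k'}$ forces $\|z_\dag-z'_\dag\|\ge c|k|^{-1+\delta} d(\k,\k')\gg |k|^{-1}$. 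Hence at least one of the $\kappa$-factors appearing in the bound must exceed $c|k|^\delta d(\k,\k')$, and taking $\rho$ large enough supplies decay $\bigl(|k|^\delta d(\k,\k')\bigr)^{-\mu}$ for any prescribed $\mu$.

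Finally I would apply the Schur test to $\mathbf{M}$. The remaining integrations (in $x_\dag$, $\xi$, $y$ and $y'$, and then one of $(z_\dag,\eta)$ or $(z'_\dag,\eta')$) contribute only polynomial factors in $|k|$ arising from the weight ratios, the volume of $\supp Q_{k,\k}$, and the residual polynomial growth in the other $\kappa$-factors; these are defeated by the decay $|k|^{-\delta\mu}$ once $\mu$ is chosen large relative to $\nu$ and $d$. This yields exactly the stated bound. The main obstacle — as in Lemma \ref{lm:ortho} — is the tedious bookkeeping of these polynomial factors, especially controlling the weight ratio $\weight^r_\aniso(x_\dag,\xi)/\weight^r_\aniso(z_\dag,\eta)$ on the relevant supports; but no new ideas beyond those already present in the proof of Lemma \ref{lm:ortho} are needed.
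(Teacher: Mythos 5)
Your proposal is correct and matches the paper's own (sketched) approach: the paper proves Lemma \ref{lm:ortho2} by the same template as Lemma \ref{lm:ortho} — estimate the kernel of $\mto_{k,\k}^*\circ\mto_{k,\k'}$ via Lemma \ref{lm:ker}, exploit the support separation, and apply the Schur test — which is exactly what you do. The central observation is correctly identified: on $\supp\tilde q_k$ the Gaussian width scale is $\langle\eta_0\rangle^{-1/2}\asymp|k|^{-1}$ (and, after the $\langle\xi_0-\eta_0\rangle^{-\rho}$ factor forces $\xi_0\approx k^2$, also $\langle\xi_0\rangle^{-1/2}\asymp|k|^{-1}$), while the separation of $\supp Q_{k,\k}$ and $\supp Q_{k,\k'}$ is $\gtrsim|k|^{-1+\delta}d(\k,\k')$, so the triangle inequality through the shared $(x_\dag,\xi)$ forces at least one $\kappa$-factor to be $\gtrsim|k|^{\delta}d(\k,\k')$, giving the $(|k|^{\delta}d(\k,\k'))^{-\nu}$ decay after choosing $\rho$ large enough to absorb the polynomial factors from the weight ratio and the Schur integration.
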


Then, proceeding similarly to  the argument in the paragraph succeeding to  Lemma~\ref{lm:ortho}, we see that the claim (\ref{eqn:ctrestk}) follows if we prove the estimate 
\begin{equation}\label{eqn:ctrestk2}
\|\mto_{k,\k}\|\le C_0 \cdot \max\{\Lambda(F,g), \|g\|_{\infty}\cdot \lambda^{-r}\cdot \Delta(F,g)\}\quad \mbox{for any $k\in \integer$ and $\k\in \integer^{2d}$.}
\end{equation}

In conclusion, we reduced the required estimate (\ref{eqn:ctrest}) on $\mto$ to the uniform  estimate (\ref{eqn:ctrestk2})  on the operators $\mto_{k,\k}$. 

\def\zero{\mathbf{0}}

\subsection{Approximation by linearization}
In this subsection, we prove the estimate (\ref{eqn:ctrestk2})  and finish the proof of Proposition \ref{pp:ctrnorm}. By changing the coordinates by elements of the transformation group $\Af$, we may and will assume that
\[
\k=\zero\quad\mbox{ and }\quad F(0)=0
\]
without loss of generality. 
Let $B:\real^{2d}\to \real^{2d}$ be the linearization of $F_\dag$ at the origin $0$, that is, we set $B=(DF_\dag)_{0}$. 
Note that it satisfies the conditions (B1)-(B3) in Subsection \ref{ss:l0}.

The operator $\mto_{k,\zero}$ is an integral operator 
\begin{equation}\label{l1}
\mto_{k,\zero}u(x_\dag,\xi)= \int \frac{\weight^r_\aniso(x_\dag,\xi)}{\weight^r_\aniso(z_\dag,\eta)} \cdot  K_{k}(x_\dag,\xi;z_\dag,\eta) \, u(z_\dag,\eta)\, dz_\dag d\eta
\end{equation}
where 
\begin{equation}\label{l2}
K_{k}(x_\dag,\xi;z_\dag,\eta)= \tilde{q}_k(\eta_0)\cdot Q_{k,\zero}(z_\dag) \cdot X_{\ctr,0}(z_\dag,\eta)\cdot K(x_\dag,\xi;z_\dag,\eta).
\end{equation}
and $K(x_\dag,\xi;z_\dag,\eta)$ is the kernel of $\widehat{\L}$ given  in (\ref{eqn:K2}) (or (\ref{eqn:K})).

As an approximation of the operator $\mto_{k,\zero}$, we introduce another  operator $\mto'_{k,\zero}$ that is defined by  (\ref{l1})  and (\ref{l2}) but with
\begin{itemize}
\setlength{\itemsep}{6pt plus 1pt minus 1pt}
\item  $\displaystyle
\frac{\weight^r_\aniso(x_\dag,\xi)}{\weight^r_\aniso(z_\dag,\eta)}$ in (\ref{l1}) replaced by $\displaystyle
\frac{\weight^r_\aniso(x_\dag,(k^2, \xi_\dag))}{\weight^r_\aniso(z_\dag,(k^2, \eta_\dag))}$
and,
\item  $K(x_\dag,\xi;z_\dag,\eta)$ in (\ref{l2}) replaced by 
\[
K'(x_\dag,\xi;z_\dag,\eta) =\int  \hat{g}(\xi_0-\eta_0,0)\cdot 
\Phi'_{k}(z_\dag;x_\dag;y_\dag)\cdot  e^{i\tau'_{k}(z_\dag,\eta_\dag;x_\dag,\xi_\dag;y_\dag)}dy_\dag
\]
\end{itemize}
where $\Phi'_{k}$ and $\tau'_{k}$ are defined respectively by
\begin{align*}
&\Phi'_{k}(x_\dag;z_\dag;y_\dag)=
\frac{k^{2d}}{\pi^{d}\cdot (2\pi)^{2d}}\cdot
  e^{-k^2 |x_\dag-y_\dag|^2/2- k^2|B(y_\dag)-z_\dag|^2/2}
\intertext{and}
&\tau'_{k}(x_\dag,\xi_\dag;z_\dag,\eta_\dag;y_\dag)= \xi_\dag\cdot ( (x_\dag/2)-y_\dag) + \eta_\dag\cdot ( B(y_\dag)-(z_\dag/2)).
\end{align*}
Compare the definition of the function $K'(\cdot)$ above and  that of  $K(\cdot)$ in (\ref{eqn:K2}).
To get the operator $\mto'_{k,\zero}$ from $\mto_{k,\zero}$, we  replaced
\begin{itemize}
\setlength{\itemsep}{3pt plus 1pt minus 1pt}
\item  the diffeomorphism $F$ by its linearization at the origin $0$,  
\item the function $\hat{g}(\xi_0, y_\dag)$ by  $\hat{g}(\xi_0, 0)$, 
\item $\xi_0$ and $\eta_0$ by $k^2$, 
\end{itemize}
and ignored the function $f$. 

Below we first show that the operator $\mto'_{k,\zero}$ satisfies the estimate (\ref{eqn:ctrestk2}) and then show that $\mto_{k,\zero}$ is well approximated by $\mto'_{k,\zero}$.

\begin{lemma}\label{lm:linear2}
There exists a constant $C_0>0$,  which does not depend on $F$ nor $g$, such that
\[
\|\mto'_{k,\zero}\|\le C_0\cdot \max\{\Lambda(F,g), \|g\|_{\infty}\cdot  \lambda^{-r}\cdot \Delta(F,g)\}\qquad \mbox{for any \;$k\in \integer$}.
\]
\end{lemma}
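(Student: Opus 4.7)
The plan is to identify $\mto'_{k,\zero}$ with (essentially) the partial-FBI lift of the transfer operator for the linearization $\mathrm{Id}\oplus B$, restricted to the frequency band $\eta_0\approx k^2$, and then to appeal to Lemma~\ref{lm:linFBI}.

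The starting point is a factorization of the kernel. Since $\hat g(\xi_0-\eta_0,0)$ does not depend on $y_\dag$, and both $\Phi'_k$ and $\tau'_k$ are independent of $\xi_0$ and $\eta_0$, one has $K'(x_\dag,\xi;z_\dag,\eta)=\hat g(\xi_0-\eta_0,0)\cdot M(x_\dag,\xi_\dag;z_\dag,\eta_\dag)$ for some $M$ involving only the transverse variables. The weight ratio $\weight^r_\aniso(x_\dag,(k^2,\xi_\dag))/\weight^r_\aniso(z_\dag,(k^2,\eta_\dag))$ is also purely transverse. After writing $\mto'_{k,\zero}$ as the composition of multiplication by $X_{\ctr,0}$ (of $L^2$-norm at most $1$) with the remaining operator $T$, it suffices to bound $T$. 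This $T$ splits as a tensor product $\mathcal A\otimes\mathcal B$ on $L^2(\real_{\xi_0})\otimes L^2(\real^{2d}_{x_\dag}\oplus\real^{2d}_{\xi_\dag})$: $\mathcal A$ acts by truncation with $\tilde q_k$ followed by convolution with $\hat g(\cdot,0)$, while $\mathcal B$ is the weighted integral transform with kernel $Q_{k,\zero}(z_\dag)\cdot M(x_\dag,\xi_\dag;z_\dag,\eta_\dag)$ conjugated by the weight.

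By Plancherel, the convolution factor of $\mathcal A$ has operator norm $\|g(\cdot,0)\|_{L^\infty}\le\|g\|_\infty$. For $\mathcal B$, I would apply the isometric rescaling $(x_\dag,\xi_\dag)\mapsto(k^{-1}x_\dag,k\xi_\dag)$ on $L^2$ (the same for $(z_\dag,\eta_\dag)$), together with the change of variable $y_\dag\mapsto k^{-1}y_\dag$ in the integral defining $M$. The envelope $\Phi'_k$ then collapses to the unit-scale Gaussian of the standard FBI wave packets on $\real^{2d}$, since the prefactor $k^{2d}/(\pi^d(2\pi)^{2d})$, the Jacobian $k^{-2d}$ from the $y_\dag$-integration, and the FBI normalization $a_{2d}^2=(2\pi)^{-2d}\pi^{-d}$ combine to exactly $1$; thus after rescaling, $M$ is precisely the Schwartz kernel of $\widehat{\L}_B=\FBI\circ\L_B\circ\FBI^*$ from Subsection~\ref{ss:affine}. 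A parallel computation using $\xi_0\cdot J(k^{-1}x_\dag)=k\cdot J(x_\dag)$ with $\xi_0=k^2$ shows that the rescaled weight $\weight^r_\aniso(k^{-1}x_\dag,(k^2,k\xi_\dag))$ is comparable, uniformly in $k\ge\sqrt{N/2}$, to the weight $\mathcal W_{k^2}(x_\dag,\xi_\dag)$ defined in~(\ref{wss}). The rescaled cutoff $Q_{k,\zero}$ becomes a cutoff to a box of size $O(k^\delta)$, hence is bounded by $1$. Consequently $\mathcal B$ is equivalent, up to these bounded distortions, to $\widehat{\L}_B$ acting on $L^2(\real^{2d}\oplus\real^{2d};\mathcal W_{k^2})$; combined with the tensor-product structure~(\ref{CD:lb}) and Lemma~\ref{lm:linFBI}, this gives $\|\mathcal B\|\le C_0\max\{d(B)^{-1/2},d(B)^{1/2}\lambda^{-r}\}$.

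Combining the two factors yields $\|\mto'_{k,\zero}\|\le C_0\|g\|_\infty\max\{d(B)^{-1/2},d(B)^{1/2}\lambda^{-r}\}$. Since $B=(DF_\dag)_0$ is $\lambda$-hyperbolic with expanding $d$-plane $E^+$, the singular-value analysis of $d(B)^2=\det((\mathrm{Id}+{}^tBB)/2)$ gives $d(B)\asymp|\det(B|_{E^+})|$, so $d(B)^{1/2}\le C\Delta(F,g)$ and $\|g\|_\infty/d(B)^{1/2}\le C\Lambda(F,g)$ (using the localization by $Q_{k,\zero}$ to points near $0\in\supp g$ after the initial normalization), which yields the required estimate~(\ref{eqn:ctrestk2}). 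The main technical obstacle is this rescaling step: verifying that the kernel $M$ and the anisotropic weight $\weight^r_\aniso(\cdot,(k^2,\cdot))$ indeed align, up to constants uniform in $k$, with the exact objects appearing in Lemma~\ref{lm:linFBI}.
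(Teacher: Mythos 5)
Your proposal is correct and follows essentially the same route as the paper's proof: identify the transverse factor with $\widehat{\L}_B$ after the isometric rescaling $(x_\dag,\xi_\dag)\mapsto(k^{-1}x_\dag,k\xi_\dag)$ (the operator $S_k$ in the paper), identify the rescaled weight with $\mathcal W_{k^2}$ from~(\ref{wss}), treat the $\xi_0$-factor as convolution by $\hat g(\cdot,0)$ with $L^2$ norm $\sup|g(\cdot,0)|$, and invoke Lemma~\ref{lm:linFBI} together with $d(B)\asymp|\det(B|_{E^+})|$. The only cosmetic difference is that the paper packages the rescaling and weight-conjugation as explicit operators $S_k$ and $W$ and writes the tensor factorization $G\otimes(W\circ S_k^{-1}\circ\widehat{\L}_B\circ S_k\circ W^{-1})$, while you carry out the same computation at the level of kernels.
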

\begin{proof}  
In the proof below, we ignore the term $\tilde{q}_k(\eta_0)\cdot Q_{k, \zero}(z,\eta) \cdot X_{\ctr,0}(z,\eta)$ in (\ref{l2}), because the multiplication by such a function does not increase the $L^2$ norm of functions. Let us introduce three operators:
\begin{align*}
&G:L^2(\real)\to L^2(\real),\qquad\qquad \quad  \qquad \qquad G u(t)=\int \hat{g}(t-t',0)\cdot  u(t') dt',\\
&S_k:L^2(\real^{2d}\oplus \real^{2d})\to L^2(\real^{2d}\oplus \real^{2d}),\;\quad S_k  u(x_\dag,\xi_\dag)
=u(k^{-1}x_\dag, k\,\xi_\dag)\\
\intertext{and}
&W:\Sch(\real^{2d}\oplus \real^{2d})\to \Sch(\real^{2d}\oplus \real^{2d}),\quad W u(x_\dag,\xi_\dag)=\weight^r_{\aniso}(x_\dag,(k^2,\xi_\dag))\cdot u(x_\dag,\xi_\dag).
\end{align*}
We identify $L^2(\real^{2d}\oplus \real^{2d+1})$ with the tensor product 
$L^2(\real)\otimes L^2(\real^{2d}\oplus \real^{2d})$ by the natural extension of the correspondence 
\[
L^2(\real)\otimes L^2(\real^{2d}\oplus \real^{2d})\ni u \otimes v \longleftrightarrow
\varphi(x_\dag,\xi):=u(\xi_0)\cdot v(x_\dag,\xi_\dag)\in L^2(\real^{2d}\oplus \real^{2d+1})
\]
where $\xi=(\xi_0,\xi_\dag)$. Then the operator $\mto'_{k,\zero}$ is identified with the tensor product
\[
G\otimes (W\circ (S_k)^{-1}\circ \widehat{\L}_B\circ S_k\circ W^{-1}):L^2(\real)\otimes L^2(\real^{2d}\oplus \real^{2d})\to L^2(\real)\otimes L^2(\real^{2d}\oplus \real^{2d}).
\]
Since the operator $G$ is just the multiplication by $g$ viewed through the inverse Fourier transform, 
we have
\[
\|G:L^2(\real)\to L^2(\real)\|=\sup_{y_0\in \real} |g(y_0, 0)|.
\]
Since $S_k$ is a unitary operator, the operator norm of the  operator 
\[
W\circ S_k^{-1}\circ \widehat{\L}_B\circ S_k\circ W^{-1}:L^2(\real^{2d}\oplus \real^{2d})\to L^2(\real^{2d}\oplus \real^{2d})
\]
should  be same as that of 
\begin{equation}\label{lb}
\widehat{\L}_B:L^2(\real^{2d}\oplus \real^{2d}; W_{k^2})\to 
L^2(\real^{2d}\oplus \real^{2d}; W_{k^2})
\end{equation}
where $W_s:\real^{2d}\oplus \real^{2d}\to \real$ is the function defined in (\ref{wss}). As we discussed in Subsection \ref{ss:l0}, the operator norm of  (\ref{lb}) equals  that of
\[
\widehat{\L}_0\otimes \overline{\widehat{\L}_0}:L^2(\real^{2d}; \mathcal{V}_{k^2})\otimes L^{2}(\real^{2d})\to L^2(\real^{2d}; \mathcal{V}_{k^2})\otimes L^{2}(\real^{2d})
\]
and, from  Lemma \ref{lm:linFBI},  is bounded by $C_0\cdot \max\{ d(B)^{-1/2},  d(B)^{1/2}\cdot \lambda^{-r}\}$. 

Therefore, noting that $d(B)$ is proportional to $\det (DF_0|_{E^+})$, we conclude that  the operator norm of $\mto'_{k,\zero}$ with respect to the $L^2$ norm is bounded by 
\begin{align*}
C_0 \cdot \left(\sup_{y_0\in \real} |g(y_0, 0)|\right)\cdot 
&\max\{ d(B)^{-1/2}, d(B)^{1/2}\cdot \lambda^{-r}\}\\
&\qquad \qquad \le C_0 \max\{\Lambda(F,g), \|g\|_\infty\cdot \lambda^{-r}\cdot \Delta(F,g)\}.
\end{align*}
This completes the proof of Lemma \ref{lm:linear2}.
\end{proof}
The last step of our proof is the following approximation lemma. Recall that our construction depend on the constant $N>0$ and that both of $\LL_{k,\zero}$ and $\LL'_{k,\zero}$ vanish if $k^2<N/2$. 

\begin{lemma}\label{lm:diff2}
For any $\epsilon>0$, we may take the constant $N$ so large  that
\[
\|\LL_{k,\zero}-\LL'_{k,\zero}\|\le \epsilon\quad  \mbox{for all $k\in \integer$.}
\]
\end{lemma}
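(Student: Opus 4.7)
The plan is to show $\|\LL_{k,\zero}-\LL'_{k,\zero}\|\le C\cdot k^{-\epsilon_0}$ for some absolute $\epsilon_0>0$, uniformly in $k$ with $k^2\ge N/2$; since both operators vanish for $k^2<N/2$, this yields $\|\LL_{k,\zero}-\LL'_{k,\zero}\|\le C\cdot(N/2)^{-\epsilon_0/2}$, which can be made smaller than $\epsilon$ by choosing $N$ large.

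First I would extract the a priori localization estimates on the effective support of the kernel $K_k$ of (\ref{l2}). The cutoff $\tilde{q}_k(\eta_0)$ forces $\eta_0=k^2+O(k)$, and because $\hat g(\xi_0-\eta_0,\cdot)$ is Schwartz in its first argument, the $\xi_0$-dependence is rapidly decaying outside $|\xi_0-\eta_0|=O(1)$. The cutoff $Q_{k,\zero}(z_\dag)$ localizes $z_\dag$ within $O(k^{-1+\delta})$ of $0$, and the Gaussians in $\Phi$ concentrate $y_\dag$ within scale $k^{-1}$ of $x_\dag$ and of $F_\dag^{-1}(z_\dag)$; together these force $\|y_\dag\|,\|x_\dag\|=O(k^{-1+\delta})$ modulo super-polynomially small remainders. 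From $X_{\ctr,0}$ we have $\|\eta_\dag-\eta_0\,\alpha_0(z_\dag)\|\le 2\langle\eta_0\rangle^\tau$, so $\|\eta_\dag\|=O(k^{1+\delta}+k^{2\tau})$, which is $O(k^{1+\delta})$ once $\delta$ is chosen large enough relative to $2\tau-1$ (compatible with $\tau<1/2+1/(100rd)$).

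Next I would write $\LL_{k,\zero}-\LL'_{k,\zero}$ as a telescoping sum of five operators corresponding to the five changes listed after the definition of $\LL'_{k,\zero}$: (A) linearizing $F_\dag$ to $B$ in the phase and in the second Gaussian; (B) replacing $\hat g(\xi_0-\eta_0,y_\dag)$ by $\hat g(\xi_0-\eta_0,0)$; (C) replacing $\langle\xi_0\rangle,\langle\eta_0\rangle$ by $k^2$ in the prefactor and the Gaussians of $\Phi$; (D) dropping the $\eta_0\cdot f(y_\dag)$ term from the phase; and (E) freezing $\xi_0=\eta_0=k^2$ in the weight ratio. Using Proposition \ref{pp:basic} together with the vanishing relations (\ref{eqn:basic}) (which give in particular $f(0)=0$ once $F(0)=0$), we obtain on the effective support
\[
F_\dag(y_\dag)=B\,y_\dag+O(\|y_\dag\|^2)=B\,y_\dag+O(k^{-2+2\delta}),\qquad f(y_\dag)=O(\|y_\dag\|^3)=O(k^{-3+3\delta}),
\]
so the phase errors in (A) and (D) are of order $\|\eta_\dag\|\cdot O(k^{-2+2\delta})$ and $|\eta_0|\cdot O(k^{-3+3\delta})$, both equal to $O(k^{-1+3\delta})$, which is $o(1)$ provided $\delta<1/3$. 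The amplitude errors in (B), (C), (E) are $O(k^{-1+\delta})$ or smaller by the same Taylor argument together with $|\xi_0-k^2|,|\eta_0-k^2|=O(k)$.

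For each of the five pieces I would apply the integration-by-parts procedure of Lemma \ref{lm:ker} verbatim to produce rapid decay in $\xi_0-\eta_0$, $\langle\xi_0\rangle^{1/2}\|x_\dag-y_\dag\|$, $\langle\eta_0\rangle^{1/2}\|F_\dag(y_\dag)-z_\dag\|$ and $\|\xi-{}^tDF_y\eta\|/\langle\|\eta\|\rangle^{1/2}$, multiplied by the pointwise gain factor $O(k^{-\epsilon_0})$ coming from the Taylor estimate of whichever quantity is being approximated. The Schur test, applied exactly as in the Claim at the end of Subsection \ref{ss:pf7}, then yields an operator-norm bound $\le C\cdot k^{-\epsilon_0}$ for each of the five differences, with $\epsilon_0>0$ depending only on $r,d,\tau,\delta$. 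The main obstacle is case (A), where a brute-force bound $|e^{i\tau}-e^{i\tau'}|\le 2$ is useless and differentiating the phase would cost a whole power of $\|\eta\|$; the trick is that $|\tau-\tau'|=O(k^{-1+3\delta})$ is already pointwise small, so the elementary estimate $|e^{i(\tau-\tau')}-1|\le|\tau-\tau'|$ suffices, with no differentiation of the phase needed. Case (D) is handled identically. Summing the five bounds and invoking the initial reduction completes the proof.
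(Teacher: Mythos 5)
Your proposal follows the same route as the paper's proof: localize to an effective support region using the cutoffs, the Gaussians, and the rapid decay provided by Lemma~\ref{lm:ker}; use Taylor expansion together with the vanishing relations (\ref{eqn:basic}) (and Proposition~\ref{pp:basic}) to show the kernels differ pointwise by a factor $O(k^{-\epsilon_0})$; then conclude by Schur's test. The only organizational difference is that you telescope the comparison into five separate operator differences (linearization of $F_\dag$, freezing $\hat g$, replacing $\langle\xi_0\rangle,\langle\eta_0\rangle$ by $k^2$, dropping $f$, freezing the weight), whereas the paper's proof estimates $|\mathcal{K}-\mathcal{K}'|$ in one stroke by listing all the quantities that are bounded by $|k|^{-2/3}$ on the effective support and combining them; these are two ways of presenting the same estimates, and your version is arguably cleaner to check term by term. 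One small inaccuracy: you say $\|\eta_\dag\|=O(k^{1+\delta}+k^{2\tau})$ is $O(k^{1+\delta})$ ``once $\delta$ is chosen large enough relative to $2\tau-1$''; the paper instead takes $\delta>0$ sufficiently small (so the dominant term in its effective support bound (\ref{eqn:xxi}) is $|k|^{2\tau+\delta}$). Either choice works because both $2\tau-1$ and $\delta$ are tiny and $\tau+\delta<1$ is the only constraint needed for the phase error $\|\eta_\dag\|\cdot O(k^{-2+2\delta})$ to be $o(1)$, but as written your sentence pulls $\delta$ in the opposite direction from the paper, and you should make explicit that $\delta$ is still being kept below $1/3$ so that your final bound $O(k^{-1+3\delta})$ remains $o(1)$.
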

\begin{proof}
We write  $
\mathcal{K}(x_\dag,\xi;z_\dag,\eta)$ and $\mathcal{K}'(x_\dag,\xi;z_\dag,\eta)$ for  the kernels of the operators $\LL_{k,\zero}$ and $\LL'_{k,\zero}$ respectively and estimate the difference between them. 
We will suppose that the point $(z_\dag,\eta)$ satisfies 
\begin{equation}\label{eqn:xxi}
\|z_\dag\|\le 2\sqrt{2d}\cdot |k|^{-1+\delta},\qquad 
\|\eta_0-k^2\|\le |k|^{1+\delta},\qquad  \|\eta-\eta_0 \cdot\alpha_0 (z_\dag)\|\le |k|^{2\tau+\delta}
\end{equation}
because  both of $
\mathcal{K}(x_\dag,\xi;z_\dag,\eta)$ and $\mathcal{K}'(x_\dag,\xi;z_\dag,\eta)$ should vanish otherwise. 
(Recall that both of the kernels contain the term $\tilde{q}_k(\eta_0)\cdot Q_{k,\zero}(z_\dag) \cdot X_{\ctr,0}(z_\dag,\eta)$.)
Also, we may and do suppose  that  the point $(x_\dag,\xi)$ satisfies 
\begin{equation}\label{eqn:zzeta}
\|x_\dag\|\le |k|^{-1+2\delta},\quad \|\xi_0-k^2\|\le |k|^{1+2\delta},   \quad \|\xi-\xi_0\cdot\alpha_0(x_\dag)\|\le |k|^{2\tau+2\delta}.
\end{equation}
This is because, from the estimate in Lemma \ref{lm:ker}, the contributions of the parts of the kernels on outside of such region to the operator norms of $\mto_{k,\zero}$ and $\mto'_{k,\zero}$ are of order $\mathcal{O}(k^{-\infty})$ and hence we may assume it arbitrarily small by taking large constant~$N$. 
For the same reason, we may and do suppose that the integrations with respect to the variable $y_\dag$ in the definitions of  $K(x_\dag,\xi;z_\dag,\eta)$ and $K'(x_\dag,\xi;z_\dag,\eta)$ are restricted to the region
\begin{equation}\label{eqn:y}
\|x_\dag-y_\dag\|\le |k|^{-1+\delta},\quad \|F_\dag(y_\dag)-z_\dag\|\le |k|^{-1+\delta}.
\end{equation}

If we take sufficiently small constant $\delta>0$ according to the choice of $r$ and $\tau$, one can check that all of the following quantities are bounded by $|k|^{-2/3}$, provided  that (\ref{eqn:xxi}), (\ref{eqn:zzeta}) and (\ref{eqn:y}) hold and that $|k|$ is sufficiently large:
\begin{align*}
&| \eta_\dag\cdot F_\dag(y_\dag)- \eta_\dag\cdot  B(y_\dag)|,\quad |\xi_0-k^2|/k^2,\quad 
|\eta_0-k^2|/k^2,\\
&\left|k^2 |x_\dag-y_\dag|^2/2-\langle \xi_0 \rangle |x_\dag-y_\dag|^2/2\right|,\quad \left|k^2 |B(y_\dag)-z_\dag|^2/2-\langle \eta_0\rangle|F_\dag(y_\dag)-z_\dag|^2/2\right|,\\
&|\hat{g}(\xi_0-\eta_0,y_\dag)-\hat{g}(\xi_0-\eta_0,0)|/\langle \xi_0-\eta_0\rangle^{-2},\quad \mbox{and}\\
&|\weight^r_{\aniso}(x_\dag,(k^2, \xi_\dag))-\weight^r_{\aniso}(x_\dag,\xi)|/\weight^r_{\aniso}(x_\dag,\xi),\\
&|\weight^r_{\aniso}(z_\dag,(k^2, \eta_\dag))-\weight^r_{\aniso}(z_\dag,\eta)|/\weight^r_{\aniso}(z_\dag,\eta).
\end{align*}
Further, under the same assumptions, we have  that 
\[
 |\langle \eta_0, f(y_\dag)\rangle | \le |k|^{-2/3}
\] 
from  Lemma \ref{pp:basic}, and also that 
\[
 C^{-1}\cdot  |k|^{-r((2\tau-1)+\delta)}\le \weight^r_{\aniso}(z_\dag, (k^2, \eta_\dag)) \le C\cdot  |k|^{2r((2\tau-1)+\delta)}
\]
and
\[
 C^{-1}\cdot  |k|^{-r((2\tau-1)+2\delta)}\le \weight^r_{\aniso}(x_\dag, (k^2, \xi_\dag))\le C\cdot  |k|^{2r((2\tau-1)+2\delta)}
\]
for a large constant $C>0$.
Therefore, comparing $\mathcal{K}(x_\dag,\xi;z_\dag,\eta)$ and $\mathcal{K}'(x_\dag,\xi;z_\dag,\eta)$ 
and using the estimates above, we obtain
\[
|\mathcal{K}(x_\dag,\xi;z_\dag,\eta)- \mathcal{K}'(x_\dag,\xi;z_\dag,\eta)|\le C\cdot |k|^{-1/2} \cdot 
\langle \xi_0-\eta_0\rangle^{-2}\cdot \int \Phi'_{k}(z_\dag;x_\dag;y_\dag)\, dy_\dag.
\]
Recalling the restrictions (\ref{eqn:xxi}) and (\ref{eqn:zzeta}) on the range of $(z,\eta)$ and $(x,\xi)$ and employing the Schur test, 
we see that this implies
\[
\|\LL_{k,\zero}-\LL'_{k,\zero}\|_{L^2}
\le C\cdot  |k|^{-1/4}
\]
provided that $\delta>0$ is sufficiently small. 
Therefore, for any $\epsilon>0$, we have 
\[
\left\|\LL_{k,\zero}-\LL'_{k,\zero}\right\|_{L^2}<\epsilon
\]
if $|k|$ is sufficiently large,  and otherwise  we may assume that both of $\LL_{k,\zero}$ and $\LL'_{k,\zero}$ vanish by letting the constant $N$ be large. 
\end{proof}

\section{The lower estimate}\label{sec:lb}
In this section, we compete the proof of the main theorem by proving 
\[
\rho_{\mathrm{ess}}(\L^t:H_\aniso^r(M)\to H_\aniso^r(M))\ge \Lambda^t.
\]
For this purpose, it is enough to show that there exist a small constant $c>0$, which does not depend on $t$,  and an infinite dimensional subspace $H(t)\subset H^r_\aniso(M)$ for  sufficiently large $t$ such that 
\begin{equation}\label{eqn:ht}
\| \L^t(u)\|^r_\aniso\ge c\cdot \left(\sup_{x\in M} \frac{|g^t(x)|}{\sqrt{\det (DF^t|_{E^u})(x)}}\right)\cdot \|u\|^r_{\aniso}
\quad \mbox{for all $u\in H(t)$. }
\end{equation}
We can construct such subspace $H(t)$ as follows\footnote{The following argument may be a bit rough but should be easy to put into a rigorous argument once we went through the previous sections.}. 
First take a point $x_*=x_*(t)$ that attains the supremum in (\ref{eqn:ht}) and   
choose a coordinate chart $\kappa_a:U_a\to V_a$ so that $F^t(x_*)\in U_a$.
Take large integer $m>0$ and a sparse increasing sequence $\{n_k\}_{k=1}^\infty$ of integers and then define a sequence of functions on $\real^{2d+1}$ by
\[
\widetilde{\varphi}_k(y)=c_k\cdot \Phi_{x_\dag,\xi_k}(y)\cdot \chi(m |y-x|)
\]
where $\Phi_{x_\dag,\xi_k}(\cdot)$ is the function defined in Subsection \ref{ss:pFBI}, 
\[
x=(x_0,x_\dag):=\kappa_a(F^t(x_*)),\qquad \xi_k=n_k\cdot \alpha_0(x)
\]
 and $c_k$ is a normalization constant such that $\|\widetilde{\varphi}_k\|_{L^2}=1$.
If we take large $m$ and  sufficiently sparse sequence $\{n_k\}$, one can show the following properties: 
\begin{itemize}
\setlength{\itemsep}{6pt plus 1pt minus 1pt}

\item $\varphi_k:=\widetilde{\varphi}_k\circ \kappa_a$, $k\ge 1$, are almost orthogonal to each other in $H^r_\aniso(M)$, 
\item $\L^t(\varphi_k)$, $k\ge 1$, are also almost orthogonal to each other in $H^r_\aniso(M)$, and 
\item the inequality (\ref{eqn:ht}) holds with $u=\varphi_k$ for $k\ge 1$. 
\end{itemize}
In fact, if $n_k$ are $n_{k'}$ are apart from each other, so are the frequencies  of $\varphi_k$ and $\varphi_{k'}$ (resp. $\L^t(\varphi_k)$ and $\L^t(\varphi_{k'})$)  in the flow direction and, therefore,  they are almost orthogonal to each other in $H^r_\aniso(M)$. 
To check the third claim,  note that  the function $\varphi_k$ is localized in a small neighborhood of $F^t(x_*)$ on which we may suppose that $F^t$ viewed in the local coordinate is  almost linear and $g^t$ is almost constant. 
 (Notice that we take $m$ and $\{n_k\}$ according to~$t$.) 
If~$F^t$ were linear and $g$ were constant, one could obtain the inequality (\ref{eqn:ht}) for $u=\varphi_k$ by a straightforward estimate using the argument in Section \ref{s:FBI}, \ref{s:pFBI} and Subsection \ref{ss:l0}. 
To conclude, we employ an approximation argument similar to (but much simpler than) that in the proof of Lemma~\ref{lm:diff2}. 
(We ask the readers to work a bit to check the details.) 
The infinite dimensional subspace $H(t)$ spanned by $\{\varphi_k\}_{k=1}^\infty$ satisfies the required property.

\appendix

\section{Proof of Lemma \ref{lm:sob} and Corollary \ref{cor:sob}}
We consider the composition $\pFBI\circ \Fourier^{-1}:L^2(\real^{2d})\to L^2(\real^{2d}\oplus \real^{2d+1})$. The Schwartz kernel of this operator is 
\[
K(x_\dag,\xi;\eta)=\int e^{i\eta y} \cdot \overline{\Phi_{x_\dag,\xi}(y)} dy
\]
Calculating a Gaussian integral, we find
\[
K(x_\dag,\xi;\eta)=\langle \xi_0\rangle^{-d/2}\cdot \pi^{-d/2}\cdot e^{-\langle \xi_0\rangle^{-1}|\eta_\dag-\xi_\dag|^2/2+i(\eta_\dag-\xi_\dag/2)x_\dag}\cdot \delta(\xi_0-\eta_0)
\]
For given function $u\in \Sch(\real^{2d+1})$, we set $
\hat{u}=\Fourier u$ and $\check{u}=\pFBI u$. 
Then it holds
\[
(\|u\|'_{H^r})^2=\|w^{r}\cdot \pFBI\circ \Fourier^{-1} \hat{u}\|^2_{L^2}
\]
where $w^r(x,\xi)=\langle \|\xi\|\rangle^r$, and the right hand side can be written as
\[
\|w^r\cdot \pFBI\circ \Fourier^{-1} \hat{u}\|_{L^2}=\int \hat{u}(\eta)\cdot  \overline{\hat{u}(\eta')}\cdot  K(x_\dag,\xi;\eta)\cdot \overline{K(x_\dag,\xi;\eta')}\cdot \langle \|\xi\|\rangle^{2r}\, dx_\dag d\xi d\eta d\eta'
\]
By calculation, we see that 
\[
\int  K(x_\dag,\xi;\eta)\cdot \overline{K(x_\dag,\xi;\eta')}\cdot \langle \xi\rangle^{2r}\, dx_\dag  d\xi =
\delta(\eta-\eta')\cdot \int  \langle \xi\rangle^{2r} \cdot \frac{e^{-\langle \eta_0\rangle^{-1}|\eta_\dag-\xi_\dag|^2}}{\langle \eta_0\rangle^d\cdot \pi^{d}} d\xi_\dag
\]
and that, for some constant $C>0$, 
\[
\int  \langle \xi\rangle^{2r} \cdot \frac{e^{-\langle \eta_0\rangle^{-1}|\eta_\dag-\xi_\dag|^2}}{\langle \eta_0\rangle^d\cdot \pi^{d}} d\xi_\dag \le C\langle \|\eta\|\rangle^{2r}.
\]
Therefore we have
\[
(\|u\|'_{H^r})^2=\|w^{r}\cdot \pFBI\circ \Fourier^{-1} \hat{u}\|_{L^2}^2\le C\|w^r\cdot \hat{u}\|^2_{L^2}
=C\|u\|_{H^r}^2
\]
Next we show the estimate in the opposite direction. Note that we may write $\|u\|_{H^r}^2=\| w^r\cdot \Fourier\circ \pFBI^* \check{u}\|_{L^2}^2$ as
\[
\|w^r\cdot \Fourier\circ \pFBI^* \check{u}\|_{L^2}=
\int \check{u}(x,\xi)\cdot  \overline{\check{u}(x'_\dag,\xi')}\cdot  K(x_\dag,\xi;\eta)\cdot \overline{K(x'_\dag,\xi';\eta)}\cdot \langle \|\eta\|\rangle^{2r}\, dx_\dag d\xi d\eta d\eta'
\]
Since 
\begin{align*}
&\left|\int  K(x_\dag,\xi;\eta)\cdot \overline{K(x'_\dag,\xi';\eta)}\cdot \langle \|\eta\|\rangle^{2r}\, dx_\dag d\xi\right| \\
&\qquad \qquad =\frac{\delta(\xi_0-\xi'_0)}
{\langle \xi_0\rangle^{d} \pi^d}
 \int \delta(\xi_0-\eta_0) \cdot \langle \|\eta\|\rangle^{2r}\cdot  e^{-\langle \xi_0\rangle^{-1}|\eta_\dag-\xi_\dag|^2-\langle \xi_0\rangle^{-1}|\eta_\dag-\xi'_\dag|^2} d\eta
\end{align*}
and since 
\[
\frac{1}
{\langle \xi_0\rangle^{d} \cdot \pi^d}
 \int \delta(\xi_0-\eta_0) \cdot \langle \|\eta\|\rangle^{2r}\cdot  e^{-\langle \xi_0\rangle^{-1}|\eta_\dag-\xi_\dag|^2-\langle \xi_0\rangle^{-1}|\eta_\dag-\xi'_\dag|^2} d\eta
 \le C\langle \|\xi\|\rangle^r
\]
for some constant $C>0$, it holds
\[
\|u\|_{H^r}^2=\|w^r\cdot \Fourier\circ \pFBI^* \check{u}\|_{L^2}^2\le C\|w^r\cdot \check{u}\|^2_{L^2}
=C(\|u\|'_{H^r})^2.
\]
This finishes the proof of Lemma \ref{lm:sob}.

Corollary \ref{cor:sob} is essentially  a consequence of   Lemma \ref{lm:sob} and the fact that there exists a constant $C=C(K)>0$ for each compact subset 
$K\Subset \real^{2d+1}$ such that 
\[
C^{-1}\langle \|\xi\|\rangle^{-2r} \le  \weight^r_\aniso(x_\dag,\xi) \le C\langle \|\xi\| \rangle^{2r}\qquad \mbox{for all $x_\dag \in K$ and $\xi \in \real^{2d+1}$.}
\]
Actually, even if the support of $u$ is contained in a compact subset $K$, the support of $\pFBI u$ will not be contained in $K\times \real^{2d+1}$. But a tedious (and rather standard) argument using the fact that $\pFBI u$ decay rapidly outside the subset $K\times \real^{2d+1}$ give the required estimate. 

\bibliographystyle{amsplain}
\bibliography{mybib}

\end{document}